\documentclass[reqno, 12pt]{amsart} 
\usepackage{fmtcount}
\usepackage[expansion=false]{microtype} 
\usepackage{amsfonts,amsthm,amsmath,amssymb,amscd,mathrsfs} 
\allowdisplaybreaks  
\usepackage{latexsym} 
\usepackage{xcolor, color} 
\usepackage[colorlinks=true,linkcolor=blue,citecolor=teal,urlcolor=black]{hyperref} 
\usepackage{graphicx}  
\usepackage{indentfirst} 

\usepackage{mathtools}

\renewcommand{\MR}[1]{}

\usepackage{lmodern} 
\usepackage{soul}     
\usepackage{wasysym}  

\theoremstyle{plain} 
\newtheorem{Thm}{Theorem}[section] 
\newtheorem{Lem}[Thm]{Lemma}     
\newtheorem{Prop}[Thm]{Proposition}
\newtheorem{Cor}[Thm]{Corollary}
\theoremstyle{definition}

\theoremstyle{remark}
\newtheorem{Rem}[Thm]{Remark}
\numberwithin{equation}{section} 

\newcommand{\beq}{\begin{equation}}            
	\newcommand{\eeq}{\end{equation}}
\newcommand{\ben}{\begin{eqnarray}}         
	\newcommand{\een}{\end{eqnarray}}
\newcommand{\beno}{\begin{eqnarray*}}
	\newcommand{\eeno}{\end{eqnarray*}}

\newcommand{\lt}{\left}
\newcommand{\rt}{\right}

\newcommand{\ltl}{\langle}
\newcommand{\rtr}{\rangle}

\newcommand{\px}{\partial_x}
\newcommand{\py}{\partial_y}

\newcommand{\p}{\partial_{i,ph}}
\newcommand{\q}{\partial_{i,fr}}

\newcommand{\qqquad}{\qquad\quad}
\newcommand{\alabel}{\stepcounter{equation}\tag{\theequation}\label}  

\usepackage[
letterpaper,                 
textheight=8.35in,           
headsep=20pt,                
footskip=36pt,               
marginparwidth=0pt,          
marginparsep=0pt,            
left=0.75in, 
right=0.75in
]{geometry}
\begin{document}
	\title[Blow-up suppression for the nematic liquid crystal flow]{Blow-up suppression for the nematic liquid crystal flow via Couette flow on $\mathbb{R}^2$}
	
	\author{Yubo~Chen}
	\address[Yubo~Chen]{School of Mathematical Sciences, Dalian University of Technology, Dalian, 116024,  China}
	\email{1220823215@mail.dlut.edu.cn}
	
	\author{Wendong~Wang}
	\address[Wendong~Wang]{School of Mathematical Sciences, Dalian University of Technology, Dalian, 116024,  China}
	\email{wendong@dlut.edu.cn}

    \author{Juncheng~Wei}
    \address[Juncheng~Wei]{Department of Mathematics, Chinese University of Hong Kong, Shatin, NT, Hong Kong}
    \email{wei@math.cuhk.edu.hk}
	
	\author{Guoxu~Yang}
	\address[Guoxu~Yang]{School of Mathematical Sciences, Dalian University of Technology, Dalian, 116024,  China}
	\email{guoxu\_dlut@outlook.com}

	\begin{abstract}
		As is well known, for the harmonic heat flow or liquid crystal flow in two-dimension, the solution may blow up when the initial energy is greater than $8\pi$. Motivated by Lai--Lin--Wang--Wei--Zhou (CPAM, 2022), where singular solutions were constructed  in the presence of small-scale velocity fields, it is natural to ask whether large-scale velocities may play a stabilizing role, preventing the concentration of blow-up. Here we show that the blow-up phenomenon can be suppressed by a Couette flow whose amplitude is large enough under a weak assumption on the anisotropic norm of the initial data. In particular, we construct examples with initial energy exceeding $8\pi$ that satisfy our assumptions.
	\end{abstract}
	

	\maketitle
	
	{\small {\bf Keywords:} blow-up suppression; nematic liquid crystal flow;  stability; Couette flow}
	
	\tableofcontents

	\section{Introduction} 
	Liquid crystals represent a state of matter exhibiting properties intermediate between those of conventional liquids and solid crystals. Among the various mesophases, the nematic phase is the most prominent, characterized by rod-like molecules that possess long-range orientational order but lack positional order. The theoretical description of nematic liquid crystals is generally categorized into a hierarchy of three models: the microscopic Doi--Onsager kinetic theory \cite{O1949,DEE1988}, the macroscopic Landau--de Gennes Q-tensor theory \cite{d1974}, and the Ericksen--Leslie vector theory \cite{E1962,L1968}. The first provides a molecular kinetic description, while the latter two offer macroscopic continuum descriptions.
	
	In this paper, we 
    consider a simplified version of the Ericksen--Leslie system, which models the flow of nematic liquid crystals in the whole plane:
    \begin{align} \label{eq:main0} \left\{
\begin{aligned} 
			\partial_t v+v \cdot \nabla v-\nu \Delta v+\nabla P & =-\lambda \nabla \cdot(\nabla n \odot \nabla n) & & \text { in } \,\mathbb{R}^2 \times(0,+\infty), \\
			\nabla \cdot v & =0 & & \text { in } \, \mathbb{R}^2 \times(0,+\infty), \\
			\partial_t n + v \cdot \nabla n & =\gamma\left(\Delta n+|\nabla n|^2 n\right) & & \text { in } \,\mathbb{R}^2 \times(0,+\infty),
	\end{aligned}\right. \end{align}
	where $v(x, y, t): \mathbb{R}^2 \times(0,+\infty) \rightarrow \mathbb{R}^2$ represents the velocity field of the flow, $n(x, y, t): \mathbb{R}^2 \times(0,+\infty) \rightarrow S^2$, the unit sphere in $\mathbb{R}^3$, is a unit-vector field that represents the macroscopic molecular orientation of the liquid crystal material and $P(x, y, t): \mathbb{R}^2 \times(0,+\infty) \rightarrow \mathbb{R}$ represents the pressure function. The constants $\nu, \lambda$, and $\gamma$ are positive constants that represent viscosity, the competition between kinetic energy and potential energy, and microscopic elastic relaxation time for the molecular orientation field.  $\nabla n \odot \nabla n$ denotes the $2 \times 2$ matrix whose $(i, j)$-th entry is given by $\nabla_i n \cdot \nabla_j n$ for $1 \leq i, j \leq 2$.
	
	The system described above is a simplified version of the Ericksen--Leslie model, 
    originally developed between 1958 and 1968 \cite{E1962,L1968}. In the static case, this model reduces to the Oseen--Frank model. It provides a macroscopic continuum description of the material's evolution, governed by the interaction between the fluid velocity field $v(x, y, t)$ and the macroscopic director field $n(x, y, t)$, which represents the microscopic orientation of rod-like liquid crystals. Structurally, system \eqref{eq:main0} represents a coupling between the non-homogeneous Navier--Stokes equations and the transported flow of harmonic maps. The mathematical analysis of \eqref{eq:main0} was initiated by Lin--Liu \cite{LL1995,LL1996} in a series of papers during the 1990s.  In two dimensional space, global existence of weak solutions of \eqref{eq:main0} with finite singular time  was obtained by Lin--Lin--Wang in \cite{LLW2010} and Hong in \cite{H2011}, respectively. The uniqueness of weak solutions was proved by Lin--Wang in \cite{LW2010} and Xu--Zhang in \cite{XZ2012}. For the general form of \eqref{eq:main0} with Oseen--Frank energy,  Hong--Xin in \cite{HX2012} proved the global existence. For the general form of \eqref{eq:main0} with Oseen--Frank energy and Leslie stress, the global well-posed results of weak solutions were established by Huang-Lin-Wang \cite{HLW2014} and Wang--Wang \cite{WW2014} independently, and later the uniqueness result was obtained by Wang--Wang--Zhang in \cite{WWZ2016}. Also, Lei--Li--Zhang \cite{LLZ2014} gave a new proof of the global wellposedness of smooth solutions for a class of large initial data in energy space under a natural geometric angle condition. Recently, Lai--Lin--Wang--Wei--Zhou \cite{LLWWZ2022} developed a new inner-outer gluing method to construct solutions that blow up exactly at those $k$ points as $t$ goes to a finite time $T$. For the system \eqref{eq:main0} in three dimensions, which is supercritical, we refer to \cite{WZZ2013} for the local well-posedness result, \cite{HLX2014} for some blow-up criteria, \cite{HLLW2016} for finite time singularity, \cite{LW2016} and \cite{KW2025} for the existence of global weak solutions under some conditions and the references therein.

	Motivated by \cite{LLWWZ2022}, where singular solutions were constructed for small velocity, we investigate the global well-posed of the system \eqref{eq:main0} with a class of large data of the velocity.
	For simplicity, assume that $\nu = \lambda = \gamma =1$ in system \eqref{eq:main0}. It is easy to verify that $(\tilde{v}, \tilde{n})=(U,e_1)$ with the Couette flow $U=(A y, 0)$ is a stationary solution to the system \eqref{eq:main0}, where $A$ is the amplitude of the Couette flow. Denote the perturbation $u(t, x, y)=v(t, x, y)-U(y)$ and $d(t,x,y) = n(t,x,y) - e_1$, then $\left(u, d\right)$ satisfies
	\begin{equation}\begin{aligned} \label{eq:main1}
			\left\{\begin{array}{l}
				\partial_t u+A y \partial_x u+\binom{A u^2}{0}+u \cdot \nabla u-\Delta u+\nabla p=- \nabla\cdot\lt(\nabla d \odot \nabla d\rt), \\
				\nabla \cdot u=0, \\
				\partial_t d +A y \partial_x d +  u \cdot \nabla d - \Delta d =       |\nabla d|^2 (d + e_1),\\
				\left.u \right|_{t=0} = u_{\text{in}}, \quad\left.d \right|_{t=0} = d_{\text{in}}.
			\end{array}\right.
	\end{aligned}\end{equation}
	Direct computations imply
	\begin{align*}
		\nabla \cdot (\nabla d \odot \nabla d) = \nabla \frac{|\nabla d|^2}{2} +   \binom{\px d \cdot\Delta d}{\py d \cdot \Delta d}. 
	\end{align*}
	Define
	$$
	\omega = \partial_y u^1-\partial_x u^2, \quad u=\nabla^{\perp} \Phi=\left(\partial_y \Phi,-\partial_x \Phi\right)^T,
	$$
	then
	$
	\Delta \Phi=\omega
	$
	and $\omega$ satisfies
	$$
	\partial_t \omega+A y \partial_x \omega-\Delta \omega+u \cdot \nabla \omega= -\py\left(  \px d\cdot\Delta d\right)  + \px \left(  \py d\cdot\Delta d\right)  .
	$$
	After the time rescaling $t \mapsto A^{-1} t$, one rewrites the system \eqref{eq:main1} by
	\begin{equation}\begin{aligned} \label{eq:main}
			\left\{\begin{array}{l}
				\partial_t \omega+y \partial_x \omega-\frac{1}{A} \Delta \omega=-\frac{1}{A}\left[ u \cdot \nabla \omega   -\px \left(  \py d\cdot\Delta d\right) + \py\left(  \px d\cdot\Delta d\right) \right], \\
				u=\nabla^{\perp} \Delta^{-1} \omega,\\
				\partial_t d + y \partial_x d   - \frac1A \Delta d =   -\frac1A \left[ u\cdot\nabla d   -|\nabla d|^2 (d + e_1)\right] .
			\end{array}\right.
	\end{aligned}\end{equation}

	\subsection{Main result}
	To state the main theorem, we first introduce some notations. Denote $D_x := -i \partial_x$, understood as a Fourier multiplier operator. All norms involving $D_x$ are defined on the Fourier side and $\langle \cdot \rangle := \sqrt{1 + (\cdot)^2}$. For $m,\epsilon \geq 0$ and certain smooth enough $f$, define $\|(\cdot,*)\|^2:= \|\cdot\|^2 + \|*\|^2$, the anisotropic norm
	\begin{align*}
		\|f\|_{Y_{m, \epsilon}} := \lt\|  \ltl D_x \rtr^m \ltl \frac1{D_x}\rtr^\epsilon  f\rt\|_{ L^2},\alabel{eq:def of Y}
	\end{align*}
	and the space-time norm
	\begin{align*}
		\|f\|_{X_{a,m,\epsilon}}^2&:= \lt\|  e^{a A^{-\frac{1}{3}}\left|D_x\right|^{\frac{2}{3} }t}     \ltl D_x \rtr^m \ltl \frac1{D_x}\rtr^\epsilon f \rt\|_{L^\infty L^2}^2      + \frac{1}{A} \lt\|e^{a A^{-\frac{1}{3}}\left|D_x\right|^{\frac{2}{3} }t}  \ltl D_x \rtr^m \ltl \frac1{D_x}\rtr^\epsilon  \nabla f\rt\|_{L^2 L^2}^2   \\
		& \quad + \frac{1}{ A^{\frac{1}{3}}}\lt\|e^{a A^{-\frac{1}{3}}\left|D_x\right|^{\frac{2}{3} } t }   \ltl D_x \rtr^m \ltl \frac1{D_x}\rtr^\epsilon \left|D_x\right|^{\frac{1}{3}} f\rt\|_{L^2 L^2}^2  +   \lt\|e^{a A^{-\frac{1}{3}}\left|D_x\right|^{\frac{2}{3} }t } \ltl D_x \rtr^m \ltl \frac1{D_x}\rtr^\epsilon   \partial_x \nabla  \Delta^{-1} f \rt\|_{L^2 L^2}^2.
		\alabel{eq:X norm}
	\end{align*}

	Our main result is stated as follows.
	\begin{Thm} \label{thm:Ericksen--Leslie system}
		Let $\frac13<\epsilon<\frac12<m$, $0<a<\frac{1}{16(1+2 \pi)}$ and $\delta>1$. Assume that the initial data $u_{\mathrm{in}}\in H^1(\mathbb{R}^2)$ and $d_{\mathrm{in}}$ satisfy
		\begin{align*}
			\lt \| \nabla^{\perp} \cdot   u_{\mathrm{in}} \rt \|_{Y_{m, \epsilon}} +\lt \|    |D_x|^\frac13  d_{\mathrm{in}} \rt \|_{Y_{m, \epsilon}}   + \lt \|   \lt(\px^2, \py^2 \rt)   |D_x|^\frac13   d_{\mathrm{in}} \rt \|_{Y_{m, \epsilon}} <\infty.
		\end{align*}
		There exists a positive constant $\bar{A}_1$ satisfying
        $$\bar{A}_1= C(m,\epsilon,\delta)\left( \lt \|    \lt(\px^2,\py^2\rt) |D_x|^\frac13  d_{\mathrm{in}} \rt \|_{Y_{m,\epsilon}} +\lt \| \nabla^{\perp} \cdot   u_{\mathrm{in}} \rt \|_{Y_{m, \epsilon}} 	+1\right) ^{\max\{\frac{2}{\delta-1},\, 24 \}}$$
		such that if $A>\bar{A}_1$ and
		\begin{align*}
			A^{\delta} \lt \|    |D_x|^\frac13  d_{\mathrm{in}} \rt \|_{Y_{m, \epsilon}}   \leq 1, \alabel{eq:smallness2} 
		\end{align*}
		the solutions to \eqref{eq:main} are global in time satisfying
		\begin{align*}
			\lt\|e^{a A^{-\frac{1}{3}}\left|D_x\right|^{\frac{2}{3} }t} u\rt\|_{L^\infty L^\infty} +\lt \|e^{a A^{-\frac{1}{3}}\left|D_x\right|^{\frac{2}{3} }t} \nabla d\rt\|_{L^\infty L^\infty} \leq C,
		\end{align*}
		for all $t\geq0$.
	\end{Thm}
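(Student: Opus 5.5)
The plan is a bootstrap (continuity) argument in the space-time norms $X_{a,m,\epsilon}$, built on the linearized Couette operator $\partial_t + y\partial_x - \frac1A\Delta$. Take the partial Fourier transform in $x$, with frequency $k$. The linear operator then decouples in $k$, and the standard enhanced-dissipation estimate applies, obtained either from a time-dependent Fourier multiplier (ghost weight) or from a hypocoercivity functional. This estimate gives a decay rate $e^{-c A^{-1/3}|k|^{2/3}t}$, together with the three dissipative quantities that appear in \eqref{eq:X norm}:
\begin{itemize}
\item the viscous term $\frac1A\|\nabla f\|^2_{L^2L^2}$,
\item the enhanced-dissipation term $A^{-1/3}\||D_x|^{1/3}f\|^2_{L^2L^2}$,
\item the inviscid-damping term $\|\partial_x\nabla\Delta^{-1}f\|^2_{L^2L^2}$.
\end{itemize}
The restriction $a<\frac{1}{16(1+2\pi)}$ should come out of the constants in this multiplier construction. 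The weights $\langle D_x\rangle^m\langle 1/D_x\rangle^\epsilon$ commute with the linear operator, so the first step is the linear estimate
\[
\|f\|_{X_{a,m,\epsilon}}^2\lesssim \|f_{\rm in}\|^2_{Y_{m,\epsilon}} + \text{(forcing in a dual norm)},
\]
which I apply to $\omega$, to $|D_x|^{1/3}d$, and to $(\partial_x^2,\partial_y^2)|D_x|^{1/3}d$. The last of these requires commuting derivatives through $y\partial_x$; since $[\partial_y, y\partial_x]=\partial_x$, I handle the resulting terms by a weighted combination of $\partial_y^2$ and $\partial_x\partial_y$ energies.

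Next I set up the bootstrap quantities
\[
E_1=\|\omega\|_{X}, \qquad E_2=\||D_x|^{1/3}d\|_X, \qquad E_3=\|(\partial_x^2,\partial_y^2)|D_x|^{1/3}d\|_X.
\]
The hypotheses are $E_1,E_3\le C_0(\text{data}+1)$ and $E_2\le 2C\,A^{-\delta}$, assumed on $[0,T]$. The nonlinear terms carry a factor $\frac1A$ in \eqref{eq:main}. I bound them with anisotropic product estimates. Because $m>\frac12$, the weight $\langle D_x\rangle^m$ controls $L^\infty_x$ via Sobolev embedding in $x$, and $\epsilon>\frac13$ is exactly what allows the low-frequency factor $|D_x|^{1/3}$ to be absorbed: $\langle 1/D_x\rangle^\epsilon$ compensates the singular $|k|^{-1/3}$ near $k=0$, and this needs $\epsilon>1/3$ for integrability. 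In $y$ I use the $\nabla$ and $\partial_y^2$ controls with Gagliardo–Nirenberg.

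The exponential weight $e^{aA^{-1/3}|k|^{2/3}t}$ is subadditive, since $|k|^{2/3}\le|k-l|^{2/3}+|l|^{2/3}$, so it distributes over products. Each nonlinear term then costs $A^{-1}$ times time-integrated dissipation factors. Converting the $L^2_t$ dissipation norms back yields powers $A^{1/2}$ or $A^{1/6}$, so the net gain is a negative power of $A$.

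The terms then close as follows.
\begin{itemize}
\item The transport term $u\cdot\nabla\omega$ closes using $u=\nabla^\perp\Delta^{-1}\omega$ and the inviscid damping term, giving a bound $A^{-c}E_1^2$.
\item The coupling terms $\partial_x(\partial_y d\cdot\Delta d)$ are quadratic in $d$, so they are bounded by $A^{-c}E_2E_3$ or $A^{-c}E_3^2$. This is affordable because one factor can be taken at the low-regularity level $E_2\lesssim A^{-\delta}$, possibly after interpolating between $E_2$ and $E_3$.
\item The harmonic-map nonlinearity $|\nabla d|^2(d+e_1)$ involves the full vector $d$, but $d+e_1=n$ is unit length, so $|d+e_1|=1$ bounds it in $L^\infty$. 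I will need to control the weighted norm of the product, which is handled by a tame product estimate.
\end{itemize}
Closing each estimate requires $A^{-c}(E_1+E_3+1)^p\ll1$. This produces the threshold $A>\bar A_1\sim(\text{data}+1)^{24}$, while the $A^{-\delta}$ smallness of $E_2$, combined with $\delta>1$, gives the exponent $\frac{2}{\delta-1}$.

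The main obstacle I expect is the $E_3$ estimate. It is a top-order energy for $d$ with a quasilinear-type forcing $\frac1A\partial^2(|\nabla d|^2 d)$ and $\frac1A\partial^2(u\cdot\nabla d)$. These contain one more derivative than the dissipation controls at rate $\frac1A$. I would integrate by parts so that the extra derivative lands on the test function, absorbing it into $\frac1A\|\nabla\partial^2|D_x|^{1/3}d\|^2_{L^2L^2}$. The remaining cubic factor is bounded by $\|\nabla d\|_{L^\infty}^2$, which is small because of the $E_2$ smallness and interpolation; the cutoff is where $\delta>1$ is used.

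Finally, the $L^\infty$ bounds in the theorem follow from the bootstrap quantities. Anisotropic Sobolev embedding gives $\|e^{\cdots}u\|_{L^\infty}\lesssim$ (norms of $\omega$ in $Y_{m,\epsilon}$) and $\|e^{\cdots}\nabla d\|_{L^\infty}\lesssim E_2+E_3$. Global existence then follows from local well-posedness of \eqref{eq:main} together with the continuation criterion of Lin–Lin–Wang, which is applicable because $\nabla d$ stays bounded.
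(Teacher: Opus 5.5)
Your architecture is the paper's. You use the same three bootstrap quantities in $X_{a,m,\epsilon}$, the ghost-multiplier linear estimate, frequency-region convolution bounds, and integration by parts to move the extra derivative of the top-order forcing onto the test function. The gap is at the step the paper singles out as the most troublesome.

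In the equation for $\partial_y^2 d$, the commutator $[\partial_y^2, y\partial_x]=2\partial_x\partial_y$ produces the forcing $-2\partial_x\partial_y d$ with coefficient $1$, not $\frac1A$. The dissipation in $X_{a,m,\epsilon}$ is only of size $\frac1A$ or $A^{-1/3}$. No choice of weighted $\partial_y^2$/$\partial_x\partial_y$ energies can give an $A$-uniform bound of $\|(\partial_x^2,\partial_y^2)|D_x|^{1/3}d\|_{X_{a,m,\epsilon}}$ in terms of top-order data, because the growth is genuine. By the Kelvin formula $\hat f(t,k,\xi)=\hat f_{\rm in}(k,\xi+kt)e^{-\cdots}$, a mode with $|k|\sim 1$ and $\xi+kt\approx 0$ has $\|\partial_y^2 f(t)\|_{L^2}\sim t^2\|f_{\rm in}\|_{L^2}$ up to the enhanced-dissipation time $t\sim A^{1/3}$. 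So $\|\partial_y^2|D_x|^{1/3}f\|_{L^\infty L^2}$ can exceed the top-order data by a factor $A^{2/3}$, and this loss must be paid for with the smallness of the low-order norm.

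The paper does exactly that. One integration by parts in $y$, plus two uses of the $\frac1A\|\nabla f\|_{L^2L^2}^2$ component of $\|f\|_{X_{a,m,\epsilon}}^2$, bounds the commutator contribution by $CA\,\||D_x|^{1/3}d\|_{X_{a,m,\epsilon}}\|\partial_x\partial_y|D_x|^{1/3}d\|_{X_{a,m,\epsilon}}\le CA^{1-\delta}K^2$ under the bootstrap. Hence $\|(\partial_x^2,\partial_y^2)|D_x|^{1/3}d\|_{X_{a,m,\epsilon}}\le C(\text{data})+CA^{\frac{1-\delta}{2}}K+CA^{-\frac1{12}}K^2$. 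This is the only place $\delta>1$ is used, and it is the source of the exponent $\frac{2}{\delta-1}$ in $\bar A_1$; the $24$ comes from $A^{-1/12}K^2\lesssim 1$.

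You instead spend $\delta>1$ on the cubic top-order term, via smallness of $\|\nabla d\|_{L^\infty}$. That term needs no smallness of $\||D_x|^{1/3}d\|_{X_{a,m,\epsilon}}$. After the integration by parts, Lemma~\ref{lem:est of pypy nabla d2 d} bounds it by $A^{-1/6}$ times a quartic expression in the bootstrap norms, i.e.\ by $CA^{-1/6}K^4$. Your threshold bookkeeping therefore reproduces the statement, but the mechanism behind the $\frac{2}{\delta-1}$ exponent is missing, and as written your $E_3$ estimate does not close. The hypothesis $E_2\le 2CA^{-\delta}$ that you already carry has to be spent on the commutator, bounded by $A\,E_2E_3$.

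A smaller point: the constraint $|d+e_1|=1$ does not help. A physical-space $L^\infty$ bound does not interact with the nonlocal weights $e^{aA^{-1/3}|D_x|^{2/3}t}\langle D_x\rangle^m\langle 1/D_x\rangle^\epsilon$. The paper instead splits $|\nabla d|^2(d+e_1)$ into the cubic part $|\nabla d|^2 d$ and the quadratic part $|\nabla d|^2e_1$. It bounds each by weighted convolution estimates over the three frequency regions, using the two-layer decomposition for the cubic part.
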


    Note that when the velocity equation (Navier--Stokes equations) is ignored, the above perturbation system reduces to a simpler equation describing the harmonic map heat flow around the Couette flow., i.e., 
	\begin{equation}\begin{aligned} \label{eq:main2}
			\partial_t d + y \partial_x d   - \frac1A \Delta d =    \frac1A    |\nabla d|^2 (d + e_1)  .
	\end{aligned}\end{equation}

	\begin{Cor} \label{thm:harmonic map heat flow}
		Let $\frac16 <\epsilon< \frac12 <m$, $0<a<\frac{1}{16(1+2 \pi)}$ and $\delta>1$. Assume that the initial data $d_{\mathrm{in}}$ satisfies
		\begin{align*}
			\lt\|    |D_x|^\frac13  d_{\mathrm{in}}\rt\|_{Y_{m, \epsilon}}+ \lt\|  |D_x|^\frac13 \nabla^2  d_{\mathrm{in}} \rt\|_{Y_{m, \epsilon}}    <\infty.
		\end{align*}
		There exists a positive constant $\bar{A}_2$ depending on $\epsilon, m$, $\delta$ and $\lt\|       |D_x|^\frac13 \nabla^2  d_{\mathrm{in}} \rt\|_{Y_{m, \epsilon}}$
		such that if $A>\bar{A}_2$ and
		\begin{align*}
			A^{  \delta} \lt\|    |D_x|^\frac13  d_{\mathrm{in}} \rt\|_{Y_{m, \epsilon}}   \leq 1, 
		\end{align*}
		the solutions to \eqref{eq:main2} are global in time satisfying
		\begin{align*}
			\lt \|e^{a A^{-\frac{1}{3}}\left|D_x\right|^{\frac{2}{3} }t} \nabla d\rt\|_{L^\infty L^\infty} \leq C,
		\end{align*}
		for all $t\geq0$.
	\end{Cor}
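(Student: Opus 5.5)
The plan is to run the same bootstrap used for Theorem~\ref{thm:Ericksen--Leslie system}, now with $u\equiv 0$. This removes the velocity equation and every term coming from $u\cdot\nabla d$ and $u\cdot\nabla\omega$. The loss of the Navier--Stokes coupling is what allows the weaker range $\frac16<\epsilon$ and makes $\bar A_2$ depend only on $\|\,|D_x|^{\frac13}\nabla^2 d_{\rm in}\|_{Y_{m,\epsilon}}$.

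First, I will record the linear space-time estimate for $\partial_t f+y\partial_x f-\frac1A\Delta f=\frac1A F$ in the norm $X_{a,m,\epsilon}$ of \eqref{eq:X norm}. This is proved on the Fourier side in $x$, frequency by frequency, with a ghost-weight multiplier of the type $\frac{1}{1+2\pi}\arctan$-style. It yields the enhanced dissipation factor $e^{aA^{-1/3}|D_x|^{2/3}t}$ (this is where $a<\frac1{16(1+2\pi)}$ enters), together with the $A^{-1/3}|D_x|^{1/3}$ and $\partial_x\nabla\Delta^{-1}$ gains. The bound has the form
\[
\|f\|_{X_{a,m,\epsilon}}^2\lesssim \|f_{\rm in}\|_{Y_{m,\epsilon}}^2+\text{(forcing paired against } f),
\]
with the forcing controlled either by $A^{-1}\|\cdot\|_{L^2L^2}$ against $\nabla f$ or by $A^{-1/3}$-type duality against the $|D_x|^{1/3}$ term. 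The $x$-independent mode $k=0$ has no enhanced dissipation. The weight $\langle 1/D_x\rangle^\epsilon$ together with the $|D_x|^{1/3}$ factor in the data effectively kills it, so I will track $|D_x|^{1/3}d$ and $|D_x|^{1/3}\nabla^2 d$ rather than $d$ itself.

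Second, I will set up the bootstrap. Assume on $[0,T]$ that
\[
E_0:=\|\,|D_x|^{\frac13}d\|_{X_{a,m,\epsilon}}\le 2C_0A^{-\delta}
\quad\text{and}\quad
E_2:=\|\,|D_x|^{\frac13}\nabla^2 d\|_{X_{a,m,\epsilon}}\le 2C_0\bigl(\|\,|D_x|^{\frac13}\nabla^2d_{\rm in}\|_{Y_{m,\epsilon}}+1\bigr).
\]
Apply $\nabla^2$ to \eqref{eq:main2}. The commutator $[\nabla^2,y\partial_x]=2\partial_x\partial_y$ (acting on $d$ via $\nabla$) is a linear term. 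It is absorbed using the $\partial_x\nabla\Delta^{-1}$ component of the $X$ norm applied to $\nabla^2 d$, or equivalently by estimating $(\partial_x^2,\partial_x\partial_y,\partial_y^2)d$ as a coupled system in the standard way for Couette problems. Interpolating between $E_0$ and $E_2$ gives $L^\infty$ control of $\nabla d$: since $m>\frac12$, the norm $\langle D_x\rangle^m$ yields $L^\infty_x$ via Sobolev in $x$, and one $y$-derivative is handled by Gagliardo--Nirenberg in $y$. The result is
\[
\|e^{aA^{-1/3}|D_x|^{2/3}t}\nabla d\|_{L^\infty L^\infty}\lesssim E_0^{\theta}E_2^{1-\theta},
\]
which is small, of order $A^{-\delta\theta}$.

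Third, and this is the main obstacle, I will estimate the nonlinearity $|\nabla d|^2(d+e_1)$ in the multiplier norm. Three issues must be handled together: the time weight $e^{aA^{-1/3}|D_x|^{2/3}t}$ must distribute over products, the product rule must hold for $\langle D_x\rangle^m\langle 1/D_x\rangle^\epsilon$, and the low $x$-frequencies must be controlled.
\begin{itemize}
\item For the time weight, I will use subadditivity $|k|^{2/3}\le|k-l|^{2/3}+|l|^{2/3}$.
\item For the product rule, $m>\frac12$ makes the $\langle D_x\rangle^m$-weighted space an algebra in $x$.
\item The low-frequency weight $\langle 1/D_x\rangle^\epsilon$ is the delicate part, since $|k|^{-\epsilon}\not\lesssim|k-l|^{-\epsilon}|l|^{-\epsilon}$ in general. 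Here I will use that the $x$-integrability gained from $|k|^{-\epsilon}$ with $\epsilon<\frac12$ is a Hardy--Littlewood--Sobolev / fractional-integration effect. The lower bound $\epsilon>\frac16$ is what lets the $|D_x|^{1/3}$ dissipation gain be split between two factors, since $\epsilon+\frac13>\frac12$ makes the relevant convolution kernel square-summable.
\end{itemize}
The cubic term $|\nabla d|^2 d$ is bounded in the same way with an extra $L^\infty$ factor. The quadratic term $|\nabla d|^2$ yields bounds of the form
\[
\frac1A\,\text{(nonlinear)}\lesssim A^{-1+\frac13}E_2^2\,\|\nabla d\|_{L^\infty}+\cdots.
\]

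Closing the bootstrap requires these contributions to be at most half the assumed bounds. For $E_2$ this holds once $A\ge \bar A_2(\epsilon,m,\delta,\|\,|D_x|^{1/3}\nabla^2 d_{\rm in}\|_{Y_{m,\epsilon}})$, a power of the data norm analogous to $\bar A_1$. For $E_0$ the nonlinear term is quadratically small relative to $A^{-\delta}$, which is where $\delta>1$ is used to beat the $A^{-1/3}$ losses. A standard continuity argument combined with local well-posedness then gives global existence and the stated $L^\infty$ bound on $\nabla d$.
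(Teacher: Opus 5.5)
\textbf{There is a genuine gap: the linear commutator term in the second-order estimate is not absorbable, and you misplace where $\delta>1$ is needed.}

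Your outline has the same architecture as the paper. You set $u\equiv 0$ and bootstrap on $A^{\delta}\||D_x|^{1/3}d\|_{X_{a,m,\epsilon}}$ and $\|(\partial_x^2,\partial_y^2)|D_x|^{1/3}d\|_{X_{a,m,\epsilon}}$ using the multiplier energy estimates. Products are handled by resonant, high--low and low--high frequency regions, where $\epsilon>\frac16$ is exactly square-integrability of $|\eta|^{-2/3}\langle 1/\eta\rangle^{-\epsilon}$ at $\eta=0$. Your interpolation bound for $\|\nabla d\|_{L^\infty}$ is also fine.

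The problem is the term you dismiss. Applying $\partial_y^2$ to \eqref{eq:main2} produces $-2\partial_x\partial_y d$, and this term cannot be absorbed. Neither the $\partial_x\nabla\Delta^{-1}$ component nor a coupled system for $(\partial_x^2,\partial_x\partial_y,\partial_y^2)d$ does it.
\begin{itemize}
\item In the energy identity the commutator carries the Fourier weight $|k||\xi|^3$, times the common weights.
\item The left-hand side supplies only $\frac{k^2\xi^4}{k^2+\xi^2}$, $A^{-1}(k^2+\xi^2)\xi^4$ and $A^{-1/3}|k|^{2/3}\xi^4$, each with $O(1)$ coefficients.
\item For $|k|\sim 1$ and $1\ll|\xi|\ll A^{1/3}$ the commutator dominates all three.
\end{itemize}
This is genuine shear-induced growth, not a defect of the multiplier. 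For the linear flow, take data with $\hat f_{\rm in}$ supported near $(k,\eta)=(1,0)$. Then $\|\partial_y^2 f(t)\|_{L^2}\gtrsim A^{2/3}\|f_{\rm in}\|_{L^2}$ at $t\approx A^{1/3}$, although $\|\partial_y^2 f_{\rm in}\|_{L^2}\lesssim\|f_{\rm in}\|_{L^2}$. So no bound of the form ``second-order norm $\lesssim$ second-order data $+$ nonlinear terms'' with $A$-independent constants can hold. Your closing of the $E_2$ bootstrap, which only weighs the nonlinear contributions, therefore does not go through.

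What is needed, and what the paper does, is to pay a power of $A$ times the lower-order norm. The paper bounds the term by $CA\,\||D_x|^{1/3}d\|_{X_{a,m,\epsilon}}\|\partial_x\partial_y|D_x|^{1/3}d\|_{X_{a,m,\epsilon}}$. Each factor uses the $A^{-1}\|\nabla\cdot\|_{L^2L^2}^2$ component of the norm and so costs $A^{1/2}$. Let $K$ denote the bootstrap bound for $E_2$; the bootstrap gives $E_0\lesssim KA^{-\delta}$. This turns the term into $CA^{1-\delta}K^2$, which is small precisely because $\delta>1$.

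Your $\partial_x\nabla\Delta^{-1}$ idea can be made into a correct bound of the same kind. Cauchy--Schwarz gives a bound $\lesssim E_2\cdot A^{1/2}\|\partial_y|D_x|^{1/3}d\|_{X_{a,m,\epsilon}}$. Interpolating this gives $\lesssim A^{1/2}E_0^{1/2}E_2^{3/2}\lesssim A^{(1-\delta)/2}K^2$, which again relies on $\delta>1$.

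Accordingly, your account of where $\delta>1$ enters is wrong. The $E_0$ estimate closes for every $\delta$: its nonlinear terms are quadratic in $E_0$ with coefficients $A^{-1/6}K$ or $A^{-1/3}K^2$, and are absorbed once $A$ is a large power of $K$. The hypothesis $A^{\delta}\||D_x|^{1/3}d_{\rm in}\|_{Y_{m,\epsilon}}\le 1$ with $\delta>1$ exists to control the commutator in the second-order estimate. That term must be added to your $E_2$ closing, with $\bar A_2$ chosen so that $A^{1-\delta}K^2$ is a small fraction of $K^2$.
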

	
	
	\begin{Rem} When the system \eqref{eq:main1} is not coupled with Navier--Stokes equations, the solutions with the initial energy $\| d_{\text{in}}\|_{L^2(\mathbb{R}^2)}^2>8\pi$ may blow up (see Chang--Ding--Ye's counterexample in the 1-corotational class in a disc \cite{CDY1992}). Davila--del Pino--Wei \cite{DDW2020} also constructed finite time blow-up solutions of the two dimensional harmonic map flow precisely at those given
points. 
For the 2D nematic liquid crystal flow, Lai--Lin--Wang--Wei--Zhou \cite{LLWWZ2022} constructed solutions that blow up exactly at those given $k$ points by
the inner-outer gluing method. Theorem \ref{thm:Ericksen--Leslie system} and Corollary \ref{thm:harmonic map heat flow} show that the solutions to the two dimensional nematic liquid crystal or the harmonic map flow will not blow up around Couette flow for a large class initial data. To the best of our knowledge, this seems to be the first result concerning the suppression of blow-up behavior in high-dimensional liquid crystals. It is worth noting that  Chen--Huang--Liu \cite{CHL2020} considered the Cauchy problem of the Poiseuille flow of the full Ericksen–Leslie model for nematic liquid crystals recently, where the liquid crystal equations reduced to a one dimensional system
\begin{align*}
    \partial_t u=\partial_x(\partial_x u+\partial_t\theta),\quad \partial_{tt}\theta+2\partial_t\theta=c(\theta)\partial_x \left(c(\theta)\partial_x\theta \right)-\partial_x u.
\end{align*}
(See also the recent similar result in \cite{CHXZ2026}).


\end{Rem}

\begin{Rem}[Physical mechanism of blow-up suppression]
Intuitively, the mechanism by which the Couette flow suppresses the potential blow-up of the nematic liquid crystal director field can be understood as a competition between the \textit{nonlinear focusing effect} and the \textit{shear-induced mixing}.
In the absence of the background shear flow, the harmonic map heat flow  may develop singularities in finite time due to energy concentration.
However, the large shear flow $U=(Ay, 0)$ acts as a transport mechanism that rapidly mixes the fluid and the director field. This shearing effect stretches the spatial scales of the director field in the streamwise direction ($x$-direction), effectively transferring energy from low frequencies to high frequencies (large $k$).
Since the dissipation term $-\Delta d$ is significantly more efficient at high frequencies, this cascade of energy to smaller scales results in a much faster decay rate---known as \textit{enhanced dissipation}---than mere diffusion would provide.
If the shear amplitude $A$ is sufficiently large, this enhanced dissipation happens fast enough to disperse the energy concentration before a singularity can form.
\end{Rem}

\begin{Rem}[Motivation for the $|D_x|^{1/3}$ index]
The choice of the fractional derivative index $1/3$ in the norms (e.g., $\| |D_x|^{1/3} d \|$) is intrinsic to the enhanced dissipation properties of the Couette flow.
The multiplier method (see \eqref{eq:m} below) allows us to recover a fractional dissipation term with a coefficient scaling as $A^{-1/3}$:
\begin{equation*}
    \int_{\mathbb{R}^2} k \partial_\xi \mathcal{M}(k, \xi) |\hat{f}|^2 d\xi dk \gtrsim \frac{1}{A^{1/3}} \| |D_x|^{1/3} f \|_{L^2}^2.
\end{equation*}
Thus, the $|D_x|^{1/3}$ norm represents the maximal gain of regularity in the $x$-direction provided by the shear flow, which is crucial for closing the nonlinear energy estimates against the potential blow-up.
\end{Rem}

\begin{Rem}[The anisotropic norm]
		Theorem \ref{thm:Ericksen--Leslie system} and Corollary \ref{thm:harmonic map heat flow} contain an anisotropic norm
		\[
		\|f\|_{Y_{m,\epsilon}}
		=  \lt\|\langle D_x\rangle^{m}\langle \frac{1}{D_x}\rangle^{\epsilon} f \rt\|_{L^2(\mathbb{R}^2)},
		\quad  \frac13  <\epsilon<\frac{1}{2}<m,
		\]
		whose weight function in Fourier variables is defined as
		\[
	\Lambda_{m,\epsilon} (k) = (1+|k|^2)^{m}(1+|k|^{-2})^{\epsilon},
		\]
which plays an important role in dealing with the velocity by Biot-Savart's law. For example, see the term of 
\begin{align*}
    \lt\|e^{a A^{-\frac{1}{3}}|l|^{\frac{2}{3} } t } |l|  \lt \|  \Delta_l^{-1} \omega_l \rt\|_{L_y^{\infty}}\rt\|_{L_l^1}
\end{align*}
       in Lemma \ref{lem:est of py^2 u}, \eqref{41}.

        At high frequencies ($|k|\gg1$), $\Lambda_{m,\epsilon}(k)\sim |k|^{2m}$, enforcing $m$-th order regularity in the $x$-direction, while at low frequencies ($|k|\ll1$), $\Lambda_{m,\epsilon}(k)\sim |k|^{-2\epsilon}$, which penalizes concentration near $k=0$. Since $2\epsilon<1$, this singularity is integrable and does not exclude typical $L^2$ functions. In contrast to the isotropic Sobolev norm with weight $(1+|k|^2+|\xi|^2)^s$, the $Y_{m,\epsilon}$-norm is anisotropic and acts only in $x$. In particular,
\[
\|f\|_{H_x^s L_y^2} \lesssim \|f\|_{Y_{m,\epsilon}}, \qquad -\epsilon \le s \le m,
\]
so $Y_{m,\epsilon}$ controls a whole interval of $x$-regularities. Typical examples include $\mathcal{S}(\mathbb{R}^2)$, $C_c^\infty(\mathbb{R}^2)$, Gaussian profiles, functions $f=\partial_x g$ with $g\in L^2$, and $L^2$ functions with zero $x$-average, while nonzero constants are excluded.
	\end{Rem}

\begin{Rem}[Examples of initial data with arbitrarily large energy]
    	Let $\phi, \varphi \in \mathcal{S}(\mathbb{R})$ be Schwartz functions satisfying:
		\begin{itemize}
			\item $\operatorname{supp} \widehat{\phi} \subseteq [1, 2]$;
			\item $\|\phi\|_{L^2(\mathbb{R})} = \|\varphi\|_{L^2(\mathbb{R})} = 1$.
		\end{itemize}
		Define the perturbation $d_{\rm in}$ ($u_{\rm in} = 0$) depending on a scaling parameter $\lambda \in (0,1)$, a frequency parameter $N \gg 1$, and an exponent $\theta$ (to be determined) as:
		\begin{align*}
			d^{\lambda, N}_{\rm in}(x,y) :=  \lambda^\theta \phi(\lambda x) \varphi(y) \cos(Ny)e_1.
		\end{align*}
		
		\underline{\bf Norm Estimates in $Y_{m, \epsilon}$.}
		Note that the Fourier transform of $\phi(\lambda x)$ is supported in $k \in [\lambda, 2\lambda]$. For $\lambda \ll 1$, within this support we have:
		\begin{itemize}
			\item $\langle k \rangle \approx 1$.
			\item $\langle k^{-1} \rangle = (1 + |k|^{-2})^{1/2} \approx \lambda^{-1}$.
		\end{itemize}
		Also, the $L^2$ scaling of the $x$-component is $\|\phi(\lambda x)\|_{L^2_x} = \lambda^{-1/2}$.
		
		\noindent \textbf{(i) Estimate of the Low-Order Term ($L$):}
		Consider the term $|D_x|^{1/3} d_{\text{in}}$. The operator multiplier is $|k|^{1/3} \approx \lambda^{1/3}$.
		\begin{align}
			L(\lambda) &:= \lt\| |D_x|^{1/3} d_{\text{in}} \rt\|_{Y_{m, \epsilon}} \notag \\
			&\approx \lambda^\theta \cdot \underbrace{\lambda^{-\epsilon}}_{\text{weight}} \cdot \underbrace{\lambda^{1/3}}_{\text{derivative}} \cdot \underbrace{\lambda^{-1/2}}_{L^2 \text{ mass}} \cdot \|\varphi \cos(Ny)\|_{L^2_y} \notag \\
			&\leq C \lambda^{\theta - \epsilon - 1/6}. \label{eq:L_est}
		\end{align}
		(Note: $\|\varphi \cos(Ny)\|_{L^2_y} \approx 1/\sqrt{2}$ for large $N$).
		To ensure $L(\lambda)$ remains small (or bounded) as $\lambda \to 0$, we require $\theta > \epsilon + 1/6$.
		
		\noindent \textbf{(ii) Estimate of the High-Order Term ($H$):}
		Consider $\lt(\partial_x^2, \partial_y^2\rt) |D_x|^{1/3} d_{\text{in}}$. For large $N$, the $\partial_y^2$ term dominates ($\partial_y^2 \sim N^2$).
		\begin{align}
			H(\lambda, N) &:= \| \lt(\partial_x^2, \partial_y^2\rt) |D_x|^{1/3} d_{\text{in}} \|_{Y_{m, \epsilon}} \notag \\
			&\approx N^2 \lt\| |D_x|^{1/3} d_{\text{in}} \rt\|_{Y_{m, \epsilon}} \leq C N^2 \lambda^{\theta - \epsilon - 1/6}. \label{eq:H_est}
		\end{align}
		
		\underline{\bf Energy Estimate.}
		The Dirichlet energy is dominated by the $y$-derivative:
		\begin{align}
			E(\lambda,N)&:=\|\nabla d_{\text{in}}\|_{L^2}^2 \ge \|\partial_y d_{\mathrm{in}}\|_{L^2}^2 \notag \\
			&= \lambda^{2\theta} \|\phi(\lambda x)\|_{L^2_x}^2 \|\partial_y (\varphi(y) \cos(Ny))\|_{L^2_y}^2 \notag \\
			&\approx \lambda^{2\theta} \cdot \lambda^{-1} \cdot \frac{N^2}{2} \geq C \lambda^{2\theta - 1} N^2. \label{eq:Energy}
		\end{align}
		To achieve $\|\nabla d_{\text{in}}\|_{L^2}^2 > 8\pi$, we need $\lambda^{2\theta - 1} N^2 > 16\pi$.
		
		\underline{\bf Parameter Selection and Verification.}
		According to Theorem \ref{thm:Ericksen--Leslie system}, a valid parameter $A$ exists if the lower bound $\bar{A}_1$ is strictly smaller than the upper bound imposed by the smallness condition $A^\delta L \le 1$.
		\begin{align*}
			\underline{\text{Condition: } \quad C(m,\epsilon,\delta) \lt(H(\lambda, N) + 1\rt)^\kappa < (L(\lambda))^{-1/\delta}},
		\end{align*}
		where $\kappa = \max\{\frac{2}{\delta-1}, 24\}$. It follows from  \eqref{eq:L_est} and \eqref{eq:H_est} that
		\begin{align}
			C(m,\epsilon,\delta) (N^2 \lambda^{\theta - \epsilon - 1/6})^\kappa < (\lambda^{\theta - \epsilon - 1/6})^{-1/\delta}. \label{eq:Gap}
		\end{align}
		Let $\mu = \theta - \epsilon - 1/6$. We choose $\theta$ so that $\mu > 0$ is a small constant. The condition becomes:
		\begin{align*}
			N^{2\kappa} \lambda^{\kappa \mu} \ll \lambda^{-\mu/\delta} \implies N \ll \lambda^{-\frac{\mu}{2} (\frac{1}{\delta} + \kappa)}.
		\end{align*}
		Since the exponent on the right hand side is negative (power of $1/\lambda$), as $\lambda \to 0$, the allowed upper bound for $N$ grows to infinity. Thus, for any sufficiently small fixed $\lambda$, there exists a large range $N$ that satisfies the gap condition.
Similarly
, the energy scales as $E \approx \lambda^{2\theta-1} N^2$. Since $N$ can be chosen arbitrarily large (within the gap limit defined by $\lambda$), we can select $N$ large enough such that:
		\begin{align*}
			\lambda^{2\theta - 1} N^2 > 16\pi, 
		\end{align*}
        which is reasonable, since there holds 
        \begin{align*}
			\lambda^{-\theta +\frac12 } \ll \lambda^{-4}\ll \lambda^{-\frac{\mu}{2} (\frac{1}{\delta} + \kappa)}\
		\end{align*}
for small $\lambda$ by choosing suitable $\theta$. For example, take $\theta=1$, then  $\mu=\frac56-\epsilon>\frac13$ with $\frac13<\epsilon<\frac12$, and 
$-\frac12>-\frac{\mu}{2} (\frac{1}{\delta} + \kappa)$ holds obviously.
Hence, one can choose $N=\lambda^{-3}$ satisfying \eqref{eq:Gap} and by \eqref{eq:Energy}
\begin{align*}
			E(\lambda,N)&:=\|\nabla d_{\text{in}}\|_{L^2}^2 \ge \|\partial_y d_{\text{in}}\|_{L^2}^2 \geq C \lambda^{-5}, 
		\end{align*}
which goes to $\infty$ as $\lambda\rightarrow 0.$
        
\end{Rem}

	\subsection{Stability mechanisms}
    The above results show that the Couette flow plays an important role in the stability theory. In fact,
	the stability of Couette flow governed by the Navier--Stokes equations has been a prominent topic in fluid mechanics since the pioneering works of Rayleigh \cite{R1879}, Kelvin \cite{K1887},  Orr \cite{O1907}, and Sommerfeld \cite{S1908}. Consider the vorticity formulation of the 2D linearized Navier–Stokes equations around the Couette flow:
	\begin{equation*}\begin{aligned} 
			\left\{\begin{array}{l}
				\partial_t \omega+y \partial_x \omega-\nu \Delta \omega=0,  \\
				\left. \omega\right|_{t=0}=\omega_{\mathrm {in}}.
			\end{array}\right.	
	\end{aligned}\end{equation*}
	By Fourier transform, the Kelvin's solution is
	$$
	\hat{\omega}(t, k, \xi)=\hat{\omega}_{\mathrm{in}}(k, \xi+k t) e^{-\nu \int_0^t|k|^2+|\xi+k(t-s)|^2 d s},
	$$
	which satisfies the following two linear estimates:
	\begin{equation}\begin{aligned} \label{eq:enhanced dissipation}
			|\hat{\omega}(t, k, \xi)| \leq C\left|\hat{\omega}_{\mathrm{in}}(k, \xi+k t)\right| e^{-c \nu^{\frac{1}{3}}|k|^{\frac{2}{3}} t},
	\end{aligned}\end{equation}
	\begin{equation}\begin{aligned}  \label{eq:inviscid damping}
			|\hat{\phi_0}(t, k, \xi)| \leq C\langle t\rangle^{-2} \frac{1+|k|^2+|\xi+k t|^2}{|k|^4}\left|\hat{\omega}_{\mathrm{in }}(k, \xi+k t)\right| e^{-c \nu^{\frac{1}{3}}|k|^{\frac{2}{3}} t},
	\end{aligned}\end{equation}
	for some $c>0$, where $\phi_0=\Delta^{-1} \omega$ is the stream function.
	
	\subsubsection{Enhanced dissipation}
	Inequality \eqref{eq:enhanced dissipation} represents the enhanced dissipation estimate. Its associated time scale is $O\left(\nu^{-1/3}\right)$, which is significantly shorter than the standard heat dissipation time scale $O\left(\nu^{-1}\right)$ for small $\nu$, implying a much faster decay. The phenomenon of enhanced dissipation has been widely observed and studied in the physics literature (see, e.g., \cite{T1888, RY1983, BL1994,LB2001}). 
    Recently, it has garnered considerable attention on the  stability of the 2D Couette flow for the Navier--Stokes equation. For example, if the perturbation for vorticity is in Gevrey-$s$ with $s < 2$, Bedrossian--Masmoudi--Vicol \cite{BMV2016} showed that the solution is stable for $\gamma =0$ on a periodic domain, and more developments can be found in \cite{BVW2018,CLWZ2020,MZ2022,WZ2023}.  When $\Omega=\mathbb{R} \times \mathbb{R}$, Arbon--Bedrossian \cite{AB2025}  and  Li--Liu--Zhao \cite{LLZ2025} obtained almost sharp stability threshold. For further references,
    we refer the reader to \cite{DWZ2021,WZ2021,CWZ2024,CWY2025a} and the references therein.
	In addition, enhanced dissipation also plays a crucial role in the suppression of blow-up in the 2D Keller--Segel(--Navier--Stokes) system \cite{KX2016, BH2017, ZZZ2021, CW2024, LXX2025}. For the high dimensional or supercritical cases we refer the reader to \cite{DSW2025, CWWWY2025,CWWW2025,CWYZ2025} and the references therein for some recent developments.
	
	\subsubsection{Inviscid damping}
	Inequality \eqref{eq:inviscid damping} represents the so-called inviscid damping estimate, which is attributed to the mixing of vorticity induced by the shear flow. This phenomenon is analogous to Landau damping in plasma physics, first discovered by Landau \cite{L1946}. For general shear flows, establishing (non)linear inviscid damping remains a challenging problem. However, in a series of works \cite{WZZ2018,WZZ2019,WZZ2020a}, Wei--Zhang--Zhao established linear inviscid damping for both monotone and certain non-monotone flows, including the Poiseuille and Kolmogorov flows. We refer the reader to \cite{MV2011, BM2015, Z2016, BCV2019,IJ2020,J2020} and the references therein for some recent developments.
	

    \subsection{The key idea in the proof}

The proof of the main theorem relies on a bootstrap argument involving carefully constructed space-time norms. To close the energy estimates, particularly for the strong coupling between the Navier--Stokes equations and the director field equation, we introduce several technical innovations.

\subsubsection{Construction of the multipliers}
The first key idea is the construction of time-dependent Fourier multipliers to capture the stabilizing effects of the Couette flow. Motivated by the works on the stability of shear flows \cite{BGM2017, DWZ2021, WZ2023}, we introduce two self-adjoint Fourier multipliers acting as ``ghost weights" to provide additional dissipation properties.

For $k, \xi \in \mathbb{R}$ and the shear amplitude $A > 0$, we define:
\begin{align*}
    \mathcal{M}_1(k, \xi) &:= \arctan \left( A^{-\frac{1}{3}} |k|^{-\frac{1}{3}} \operatorname{sgn}(k)\xi \right) + \frac{\pi}{2}, \\
    \mathcal{M}_2(k, \xi) &:= \arctan \left( \frac{\xi}{k} \right) + \frac{\pi}{2}.  
\end{align*}
The total multiplier is defined as $\mathcal{M} := \mathcal{M}_1 + \mathcal{M}_2 + 1$, satisfying the boundedness condition 
\begin{align} \label{eq:bound of M}
    1 \le \mathcal{M} \le 1 + 2\pi.
\end{align}

\begin{itemize}
    \item $\mathcal{M}_1$ is the \textbf{enhanced dissipation multiplier}. It captures the enhanced dissipation time scale $O(\nu^{-1/3})$, which is significantly shorter than the standard heat dissipation time scale. This is crucial for suppressing the blow-up of the director field.
    \item $\mathcal{M}_2$ is the \textbf{inviscid damping multiplier}. It tracks the mixing of vorticity induced by the shear flow, analogous to Landau damping.
\end{itemize}

A crucial feature of these multipliers is their commutator with the transport operator $\partial_t + y\partial_x$: 
	\begin{equation} \label{eq:crucial M}
		2 \Re\big(\left(\partial_t+y \partial_x\right) f \mid \mathcal{M}_i f\big)=\frac{d}{d t}\lt\|\sqrt{\mathcal{M}_i} f\rt\|_{L^2}^2+\int_{\mathbb{R}^2}  k \partial_{\xi}  \mathcal{M}_i(k, \xi)|\hat{f}|^2 d k d \xi,
	\end{equation}
	for a sufficiently smooth function $f= f(t, x ,y)$ on $(0, +\infty) \times \mathbb{R}^2$. As shown in \cite{CWY2025b,LLZ2025}, we have the coercivity property:
\begin{equation} \label{eq:m}
    \int_{\mathbb{R}^2} k\partial_{\xi} \mathcal{M}(k, \xi) |\hat{f}|^2 d k d \xi \ge \frac{1}{4A^{\frac{1}{3}}} \lt\| |D_x|^{\frac{1}{3}} f \rt\|_{L^2}^2 - \frac{1}{2A} \lt\|\partial_y f\rt\|_{L^2}^2 + \lt\|\partial_x \nabla \Delta^{-1} f\rt\|_{L^2}^2.
\end{equation}
This allows us to recover a fractional derivative term $\lt\| |D_x|^{ 1/3} f \rt\|_{L^2}^2$ with a positive sign, which is essential for controlling the high-frequency interactions in the bootstrap argument.

\subsubsection{Frequency-based two-layer decomposition} \label{subsubsec132}
To address the complex nonlinear terms, particularly the geometric constraint term $|\nabla d|^2 d$ in the director equation and the stress tensor $\nabla \cdot (\nabla d \odot \nabla d)$ in the fluid equation, which involve convolutions in the frequency space that are difficult to estimate using standard paraproducts, we introduce the following frequency-based two-layer decomposition.
Motivated by \cite{LLZ2025}, for a fixed frequency $k$ and a convolution variable $l$, we divide the integration domain into three distinct regions based on the relative sizes of frequencies:
\begin{itemize}
    \item \textbf{Region I (Near-resonant interactions):} $\mathcal{R}_{\text{res}}(k, l) := \mathbb{R}\times \{l : \frac{1}{2}|k-l| \le |k| \le 2|k-l|\}$. In this region, the interacting frequencies are comparable, i.e., $|k| \sim |k-l|$.
    \item \textbf{Region II (High-Low interactions):} $\mathcal{R}_{\text{HL}}(k, l) := \mathbb{R}\times \{ l : |k| > 2|k-l|\}$. Here, the output frequency $k$ is much larger than the difference $k-l$, implying $|k| \sim |l|$.
    \item \textbf{Region III (Low-High interactions):} $\mathcal{R}_{\text{LH}}(k, l) :=\mathbb{R}\times \{ l: 2|k| < |k-l|\}$. Here, the output frequency is small compared to the inputs, implying $|k-l| \sim |l|$.
\end{itemize}
{\bf One new observation} is the following {\bf  frequency-based two-layer decomposition method}. Consider the form in Region I of like
\begin{align*} 
\int_{\mathcal{R}_{\text{res}} }\left(\int_{\mathbb{R}}\cdots d\eta\right)dkdl 
&=\int_{\mathcal{R}_{\text{res}} }\left(\int_{ \{\eta : \,\frac{1}{2}|k-l-\eta| \le |k-l| \le 2|k-l-\eta|\} }\cdots d\eta\right)dkdl\\
& \quad +\int_{\mathcal{R}_{\text{res}} }\left(\int_{ \{\eta :\, |k-l|>2|k-l-\eta|\} } \cdots d\eta\right)dkdl+\int_{\mathcal{R}_{\text{res}} }\left(\int_{ \{\eta :\, 2|k-l| < |k-l-\eta|\} } \cdots d\eta\right)dkdl \alabel{eq:twolayer}
\end{align*} 
which allows us to balance the weights $\langle k \rangle^m \langle 1/k \rangle^{\epsilon}$ and the derivative losses across different frequency modes effectively.
For example, in the proof of Lemma \ref{lem:est of dx13 nabla d2 d}, 
		using Fourier transform  we obtain
		\begin{align*} 
			& \quad   \left|\Re\left( |D_x|^{\frac13}  \left(  |\partial_x d|^2 d \right) \left\lvert\, \mathcal{M} e^{2 a A^{-\frac{1}{3}}\left|D_x\right|^{\frac{2}{3} }t }\langle D_x\rangle^{2 m  }\langle\frac{1}{D_x}\rangle^{2 \epsilon} |D_x|^{\frac13} d\right.\right)\right|\\
			&\lesssim  \int_{\mathbb{R}^3} e^{2 a A^{-\frac{1}{3}}|k|^{\frac{2}{3} } t }\langle k\rangle^{2 m   }\langle\frac{1}{k}\rangle^{2 \epsilon} |k|^\frac23 \\
			&\qqquad \quad \times \left|\int_{\mathbb{R}} \mathcal{M}\left(k, D_y\right)   d_k(y) \cdot   d_l(y)  \lt(k-l-\eta\rt) \eta  d_{k-l-\eta}(y) \cdot d_{\eta}(y)   d y\right| d k d ld\eta
		\end{align*}
		By \eqref{eq:trick1} and Corollary \ref{cor:L infty} ($\alpha=\beta=\frac13$), the term on the right hand in $\mathcal{R}_{\text{res}}(k, l)$ is controlled by
		\begin{align*}  
			& C \lt\|e^{a A^{-\frac{1}{3}}|k|^{\frac{2}{3} }t}\langle k\rangle^{m  }\langle\frac{1}{k}\rangle^\epsilon |k|^\frac23  \|d_k\|_{L_y^2}\rt\|_{L_k^2}     \lt\|e^{a A^{-\frac{1}{3}}|l|^{\frac{2}{3} } t } \| d_l \|_{L_y^{\infty}}\rt\|_{L_l^1} \\
			&\quad \times\left\|e^{a A^{-\frac{1}{3}}|k-l|^{\frac{2}{3} } t }\langle k-l\rangle^{m   } \langle\frac{1}{k-l}\rangle^\epsilon    \int_{\mathbb{R}}|k-l-\eta| |\eta| \| d_{k-l-\eta}\|_{L_y^2} \|d_{\eta}\|_{L_y^\infty} d\eta \right\|_{L_{k-l}^2},
		\end{align*}
		where the last term on the right side can be estimated using the idea from  (\ref{eq:twolayer}) again ( more details can be found in Lemma \ref{lem:Preliminary estimates}). We remark that the method  for handling the convolution of multiple functions could potentially be applied to many more general physical models, such as general Ericksen--Leslie systems with Leslies coefficients, the Landau--Lifshitz--Gilbert equation, and others.

\subsubsection{Energy transfer mechanism and blow-up suppression}

Through the energy estimates derived in Sections \ref{sec.3}--\ref{sec.6}, we demonstrate that the enhanced dissipation effect, quantified by the term $A^{1/6}\lt\| |D_x|^{1/3} d \rt\|$, dominates the nonlinear focusing effects when $A$ is sufficiently large. Specifically, the strong shear flow mixes the director field $d$ rapidly in the $x$-direction, transferring energy to high frequencies where dissipation is efficient. This mechanism prevents the concentration of energy required for the formation of singularities, thereby extending the local solution to a global one under the condition $A \gg 1$.

The final key idea is utilizing the large shear parameter $A$ to suppress finite-time blow-up. The coupling between the fluid and the crystal induces a complex energy transfer mechanism, which we quantify through the following hierarchy of estimates:

\begin{itemize}
    \item \textbf{Horizontal Regularity (Propositions \ref{lem:est of |D_x|13 d}):}
    The proposition control the horizontal derivative $|D_x|^{1/3} d$. The enhanced dissipation provides strong decay factors with respect to $A$. Schematically we have:
    \begin{equation*}
       \lt \|  |D_x|^{1/3} d \rt\|_{X}^2 \lesssim \|\text{data}\|^2 + A^{-1/2}\|\omega\|_{X} \lt\| |D_x|^{1/3} d\rt\|_{X} + A^{-1/6} \|\text{nonlinear terms}\|,
    \end{equation*}
where the factor $A^{-1/6}$ is crucial for absorbing the nonlinear growth.

    \item \textbf{Mixed Regularity (Proposition \ref{lem:est of pypy dx13 d}):}
    This proposition handles the mixed derivative $\nabla^2 |D_x|^{1/3}   d$. Due to the shear transport term $y\partial_x$, a derivative loss of order $A$ typically occurs when estimating vertical derivatives. We obtain:
    \begin{equation*}
        \lt\| \partial_y |D_x|^{1/3} \nabla d \rt\|_{X}^2 \lesssim \|\text{data}\|^2 + A \lt\| |D_x|^{1/3} d \rt\|_{X} \lt\| \partial_y |D_x|^{1/3} \nabla d \rt\|_{X} + \dots,
    \end{equation*}
    where the second item from the right is {\bf the most troublesome one}.
    Although a factor of $A$ appears, it is multiplied by the lower-order norm $\lt\| |D_x|^{1/3} d \rt\|_{X}$, which is small (controlled by Proposition  \ref{lem:est of |D_x|13 d}). This allows us to close the estimate despite the shear-induced growth.

    \item \textbf{Fluid Vorticity (Proposition \ref{lem:est of omega}):}
    The fluid vorticity $\omega$ is driven by the Leslie stress $\nabla \cdot (\nabla d \odot \nabla d)$. The estimate takes the form:
    \begin{equation*}
        \| \omega \|_{X}^2 \lesssim \|\text{data}\|^2 + A^{-1/2} \|\omega\|_{X}^3 + A^{-1/3} \|\omega\|_{X} \left(\lt\| |D_x|^{1/3} \nabla d \rt\|_{X}^2 + \lt\| |D_x|^{1/3} \lt(\px^2, \py^2\rt) d \rt\|_{X}^2\right) ,
    \end{equation*}
   which shows that the feedback from the director field to the fluid is suppressed by $A^{-1/2}$ or $A^{-1/3}$.
\end{itemize}

The system exhibits a cyclic dependency: the director field estimates  rely on the smallness of the velocity field $\|\omega\|_X$, while the vorticity estimate (Prop. \ref{lem:est of omega}) relies on the control of the director field stress $\lt\|\nabla^2 |D_x|^{1/3} d\rt\|_X$. 
By choosing $A$ sufficiently large, the decay factors ($A^{-1/6}, A^{-1/2}$) in the nonlinearities and coupling terms dominate the destabilizing factors (such as the linear growth $A$), thereby closing the bootstrap argument in Proposition \ref{main prop2}.

	\subsection{Notations and Outlines}
	
	Here are some notations used in this paper.
	
	\noindent\textbf{Notations}:
	\begin{itemize}
		\item For a given function $f(x,y)$ on $\mathbb{R}^2$, its $k$-th horizontal Fourier modes can be defined by
		\begin{equation} \label{eq:def of fk}
			f_k(y)=\mathcal{F}_{x \rightarrow k}(f)(k, y) = (2\pi)^{-\frac12}\int_{\mathbb{R}} f(x, y) \mathrm{e}^{- ikx} dx,
		\end{equation}
		In addition, denote 
		$$
		\hat{f}_k(\xi)=\hat{f}(k, \xi) = \mathcal{F}_{y \rightarrow \xi}(f_k)(y) = (2\pi)^{-1}  \int_{\mathbb{R}^2} f(x, y) \mathrm{e}^{- i(kx + \xi y)} dxdy.
		$$
		For another given function $g(x,y)$ on $\mathbb{R}^2$, note that
		\begin{equation*}
			\mathcal{F}_{x \rightarrow k}(f\cdot g)(k, y) = (2\pi)^{-\frac12}  f_k(y)*g_k(y).
		\end{equation*}
		\item Denote $C$ by  a positive constant independent of $A$, $t$ and the initial data, and it may be different from line to line. $B \lesssim D$ means there exists an absolute constant $C>0$, such that $B \leq C D$. $B \sim D$ means that there exists a constant $C>0$ such that
        $
         C^{-1} B \le D \le C B .
        $

		\item The space norm $\|f\|_{L^{p}}$ is defined by	
		$\|f\|_{L^{p}(\mathbb{R}^2)}=\left(\int_{\mathbb{R}^2}|f|^p dxdy\right)^{{1}/{p}}  $. For $t\geq0$, the time-space norm $\|f\|_{L^{q}L^{p}}$ is defined by	
		$\|f\|_{L^qL^p}=\lt\|\|f\|_{L^p(\mathbb{R}^2)}\rt\|_{L^q(0,t)}$.
		For simplicity, we write $\| (f,g)\|_{L^p} = \sqrt{\|f\|_{L^p}^2 + \|g\|_{L^p}^2 }$. 
		\item Denote $(f | g)$ by the $L^2\left(\mathbb{R}^2\right)$ inner product of $f$ and $g$.
        \item Denote $\nabla_k :=(k,\, \py)$ and $\Delta_k := \py^2 - k^2 $.
        \item We introduce two notations to make our proofs more concise.
\[
\p = 
\begin{cases}
\px, & i=1, \\
\py,  & i=2,
\end{cases}
\qquad 
\q = 
\begin{cases}
\hat{\px}, &i=1, \\
\py,        & i=2,
\end{cases}\alabel{eq:ph-fr}
\]
where $\hat{\px}$ denotes $\px$ on the Fourier side (Specifically, it may be $k$, $k-l$, $k-l-\eta$ and etc in different situations, and it varies from case to case).
	\end{itemize}

	 \noindent\textbf{Outlines}:
The rest of the paper is organized as follows. In Sec. \ref{sec.2}, we present some preliminaries and give the proof of the main theorem under Prop. \ref{main prop2}. In Sec. \ref{sec.3}, we derive the energy estimate for $|D_x|^{1/3} d$ (Prop. \ref{lem:est of |D_x|13 d}), including estimates for the elastic nonlinearity and the nonlinear energy. Sec. \ref{sec.4} is devoted to the energy estimate for the higher-order derivative terms $(\partial_x^2, \partial_y^2)|D_x|^{1/3} d$ (Prop. \ref{lem:est of pypy dx13 d}). In Sec. \ref{sec.5}, we establish the energy estimate for the vorticity $\omega$ (Prop. \ref{lem:est of omega}). Finally, Sec. \ref{sec.6} completes the proof of Prop. \ref{main prop2}.

	\section{Preliminaries and proof of the main theorem} \label{sec.2}

    In this section we introduce some useful lemmas and complete our main theorem under an important proposition. 
    
    \subsection{Space–time estimates }
	
	Let $u$ be determined by $\eqref{eq:main}_2$, and $f$ satisfy
	\begin{equation}\begin{aligned} \label{eq:f}
			\left\{\begin{array}{l}
				\partial_t f+y \partial_x f-\frac{1}{A} \Delta f=-\frac{1}{A}u\cdot\nabla f+g,\quad (t,x,y) \in (0, T)\times \mathbb{R}^2, \\
				\left. f\right|_{t=0} = f_{\rm in}, \quad (x,y) \in  \mathbb{R}^2,
			\end{array}\right.
	\end{aligned}\end{equation}
	where $g$ represents an additional term. We recall the space-time estimate for \eqref{eq:f}.
	\begin{Prop}[Proposition 2.3 in \cite{CWY2025b}] \label{prop:est of f}
		Let $u$ be determined by $\eqref{eq:main}_2$ and $0<a<\tfrac{1}{16(1+2 \pi)}$.
		Then, for $0<\epsilon<\frac12<m$ and $0\leq t\leq T$,
		\begin{equation*}\begin{aligned}
				\lt \|    f \rt \|_{X_{a,m,\epsilon}}^2 
				&\lesssim    
				\lt\|   f_{\mathrm{in}} \rt\|_{L^2}^2 +   \int_{0}^{t} \left| \Re\left(g \left\lvert\, \mathcal{M} e^{2 a A^{-\frac{1}{3}}\left|D_x\right|^{\frac{2}{3} }t }\langle D_x\rangle^{2 m}\langle\frac{1}{D_x}\rangle^{2 \epsilon} f\right.\right) \right| dt  \\
				&\quad + \frac1{A^\frac12}\|         \omega \|_{X_{a,m,\epsilon}}   \|         f \|_{X_{a,m,\epsilon}}^2 .
		\end{aligned}\end{equation*}
Especially,
\begin{align*}
 &\quad\frac{1}{A}\int_{0}^{t} \left| \Re\left( u\cdot\nabla f\left\lvert\, \mathcal{M} e^{2 a A^{-\frac{1}{3}}\left|D_x\right|^{\frac{2}{3} }t }\langle D_x\rangle^{2 m}\langle\frac{1}{D_x}\rangle^{2 \epsilon} f\right.\right) \right| dt\nonumber\\
 &\lesssim \frac{1}{A}\int_{0}^{t} \int_{\mathbb{R}^2} e^{2 a A^{-\frac{1}{3}}|k|^{\frac{2}{3} } t }\langle k\rangle^{2 m}\langle\frac{1}{k}\rangle^{2 \epsilon}\left|\int_{\mathbb{R}} \mathcal{M}\left(k, D_y\right) f_k(y) \partial_y \Delta_l^{-1} \omega_l(y)  (k-l) f_{k-l}(y) d y\right| d k d l dt\nonumber\\
 &\quad + \frac{1}{A}\int_{0}^{t} \int_{\mathbb{R}^2} e^{2 a A^{-\frac{1}{3}}|k|^{\frac{2}{3} } t }\langle k\rangle^{2 m}\langle\frac{1}{k}\rangle^{2 \epsilon}\left| \int_{\mathbb{R}} \mathcal{M}\left(k, D_y\right) f_k(y) l \Delta_l^{-1} \omega_l(y)  \py f_{k-l}(y) d y  \right|d k d l dt\nonumber\\
 &\lesssim \frac1{A^\frac12}\|         \omega \|_{X_{a,m,\epsilon}}   \|         f \|_{X_{a,m,\epsilon}}^2, \alabel{eq:uf-estimate}
 \end{align*}
 where $f_k(y)$ and $X_{a,m,\epsilon}$ are defined in \eqref{eq:def of fk} and  \eqref{eq:X norm}, respectively.
	\end{Prop}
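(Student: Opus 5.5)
The plan is the standard weighted energy method with the ghost multiplier $\mathcal{M}$. Apply the operator $e^{aA^{-1/3}|D_x|^{2/3}t}\langle D_x\rangle^{m}\langle \frac{1}{D_x}\rangle^{\epsilon}$ to \eqref{eq:f}, and pair the result in $L^2$ with $\mathcal{M}$ times the same weighted $f$. Write $F:=e^{aA^{-1/3}|D_x|^{2/3}t}\langle D_x\rangle^{m}\langle \frac{1}{D_x}\rangle^{\epsilon}f$. Since all weights are Fourier multipliers in $x$ only, they commute with $\partial_t$ up to the exponential factor, with $\Delta$, and with $y\partial_x$: the weights depend only on $k$, while $y\partial_x$ acts as $k\partial_\xi$ on the Fourier side. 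This gives
\[
\partial_t F + y\partial_x F - \tfrac1A\Delta F - aA^{-1/3}|D_x|^{2/3}F = \text{(weighted } -\tfrac1A u\cdot\nabla f + g).
\]
Taking $2\Re(\cdot\,|\,\mathcal{M}F)$ and using \eqref{eq:crucial M}, we get $\frac{d}{dt}\|\sqrt{\mathcal M}F\|^2+\int k\partial_\xi\mathcal M|\hat F|^2$. The diffusion gives $\frac2A\Re(\nabla F|\mathcal M\nabla F)\ge \frac2A\|\nabla F\|^2$, because $\mathcal M\ge1$ is real, self-adjoint and commutes with derivatives.

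Next I would use the coercivity \eqref{eq:m}. It yields $\frac{1}{4A^{1/3}}\||D_x|^{1/3}F\|^2+\|\partial_x\nabla\Delta^{-1}F\|^2$, minus $\frac{1}{2A}\|\partial_yF\|^2$, and that loss is absorbed by the diffusion term. The exponential weight produces the bad term $2aA^{-1/3}\||D_x|^{1/3}\sqrt{\mathcal M}F\|^2\le 2a(1+2\pi)A^{-1/3}\||D_x|^{1/3}F\|^2$ by \eqref{eq:bound of M}. The hypothesis $a<\frac{1}{16(1+2\pi)}$ keeps this at most $\frac18A^{-1/3}\||D_x|^{1/3}F\|^2$, half of the gain, so it is absorbed. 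Integrating in time and taking the supremum then gives every piece of $\|f\|_X^2$ on the left. The right side is $\|\sqrt{\mathcal M}F(0)\|^2\lesssim\|f_{\rm in}\|^2$ in the weighted norm, plus the $g$-pairing exactly as written, plus the transport pairing.

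The main obstacle is the transport term, i.e.\ \eqref{eq:uf-estimate}. Write $u\cdot\nabla f=\partial_y\Phi\,\partial_xf-\partial_x\Phi\,\partial_yf$ with $\Phi_l=\Delta_l^{-1}\omega_l$, and take the $x$-Fourier transform. This produces the two convolution integrals in $k,l$ displayed in the statement. I would estimate each by putting $\mathcal M(k,D_y)f_k$ in $L^2_y$ (using boundedness of $\mathcal M$), the $\Phi$ factor in $L^\infty_y$, and the remaining $f$ factor in $L^2_y$.

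The weights are distributed using $\langle k\rangle^m\langle1/k\rangle^\epsilon\lesssim \langle l\rangle^m\langle1/l\rangle^\epsilon+\langle k-l\rangle^m\langle1/(k-l)\rangle^\epsilon$, together with $|k|^{2/3}\le|l|^{2/3}+|k-l|^{2/3}$ for the exponential. The frequency split into the regions $\mathcal R_{\rm res},\mathcal R_{\rm HL},\mathcal R_{\rm LH}$ decides which factor carries the weight.

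The velocity factors are bounded as follows. Use Gagliardo–Nirenberg in $y$: $\|\partial_y\Delta_l^{-1}\omega_l\|_{L^\infty_y}\lesssim\|\partial_y\Delta_l^{-1}\omega_l\|^{1/2}\|\partial_y^2\Delta_l^{-1}\omega_l\|^{1/2}$, and similarly for $l\Delta_l^{-1}\omega_l$. Summing in $l$ via Cauchy–Schwarz requires $\int\langle l\rangle^{-2m}\langle1/l\rangle^{-2\epsilon}\cdots dl<\infty$, which holds since $m>\frac12$. The $\langle1/l\rangle^{\epsilon}$ weight, with $\epsilon<\frac12$, handles low $l$, where $\Delta_l^{-1}$ is singular.

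The time integrals are then split between the inviscid damping piece $\|\partial_x\nabla\Delta^{-1}\omega\|_{L^2L^2}$, the enhanced dissipation piece $A^{-1/3}\||D_x|^{1/3}\cdot\|^2$, and the $\frac1A\|\nabla\cdot\|^2$ piece of the $X$ norm of $\omega$ and $f$. Balancing the powers of $A$ should give exactly $\frac1A\cdot A^{1/2}\|\omega\|_X\|f\|_X^2$: for example, $\frac1A\cdot(A^{1/2}\|\nabla f\|_{L^2L^2}A^{-1/2})\cdot\ldots$. The delicate part I expect is the low-frequency region $|l|\ll1$. There I would trade the $\langle1/l\rangle^{-\epsilon}$ weight against the inviscid damping term, and use $\partial_x\nabla\Delta^{-1}$ to recover the factor $l$.
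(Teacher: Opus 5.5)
Your linear part is correct and is the same multiplier computation the paper runs in the proof of Proposition~\ref{lem:est of |D_x|13 d}. The paper does not reprove this proposition; it quotes it from \cite{CWY2025b}.

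\textbf{Main gap: the weights in $\mathcal{R}_{\text{LH}}$.} Your transport estimate \eqref{eq:uf-estimate} starts from the weight inequality
\[
\langle k\rangle^m\langle \tfrac1k\rangle^\epsilon\lesssim \langle l\rangle^m\langle \tfrac1l\rangle^\epsilon+\langle k-l\rangle^m\langle \tfrac1{k-l}\rangle^\epsilon ,
\]
and this is false. For $l=1$ and $k\to0$ the left side grows like $|k|^{-\epsilon}$, while the right side stays bounded.

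In $\mathcal{R}_{\text{LH}}(k,l)$, where $|k|\ll|l|\sim|k-l|$, neither input can carry the singular weight of the low output frequency. This is the delicate region. Small $l$ is not: it only needs $\epsilon>0$, through $\||l|^{-\frac12}\langle l\rangle^{-m}\langle\frac1l\rangle^{-\epsilon}\|_{L^2_l}<\infty$.

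The missing device is the one used in every $\mathcal{R}_{\text{LH}}$ estimate of the paper (the term $I_3$ in Lemma~\ref{lem:Preliminary estimates}, Step IV of Lemma~\ref{lem:est of py^2 u}):
\begin{itemize}
\item write $\langle k\rangle^{2m}\lesssim\langle l\rangle^m\langle k-l\rangle^m$ by \eqref{eq:trick3};
\item keep one full weight on $f_k$;
\item put the leftover $\langle k\rangle^{-m}\langle\frac1k\rangle^{\epsilon}$ in $L^2_k$;
\item bound the remaining convolution, uniformly in $k$, by the product of two $L^2$ norms.
\end{itemize}
There is a further problem in this region. The bound $\|l\Delta_l^{-1}\omega_l\|_{L^\infty_y}\le|l|^{-\frac12}\|\nabla_l(l\Delta_l^{-1}\omega_l)\|_{L^2_y}$ leaves a factor $|l|^{-\frac12}$ that $\langle\frac1l\rangle^{\epsilon}$ cannot absorb, since $\epsilon<\frac12$. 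It has to be traded through \eqref{eq:trick3.5}, with $s=s_2=\frac12$, $s_1=0$, $s_3=\epsilon$, for a factor $\langle\frac1k\rangle^{\frac12-2\epsilon}$. The resulting kernel $\langle k\rangle^{-m}\bigl(\langle\frac1k\rangle^{\epsilon}+\langle\frac1k\rangle^{\frac12-\epsilon}\bigr)$ lies in $L^2_k$ exactly when $0<\epsilon<\frac12<m$. This is where the upper bound on $\epsilon$ enters, not in the low-$l$ regime as you suggest.

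\textbf{Secondary gap: the $A$-bookkeeping for $\partial_y\Delta_l^{-1}\omega_l\,(k-l)f_{k-l}$.} Your sketch puts the full derivative of $f$ in $L^2L^2$ (cost $A^{\frac12}$) and $f_k$ in $L^\infty L^2$. That needs the velocity factor in $L^2_tL^\infty_y$ at no cost in $A$.
\begin{itemize}
\item This works for $l\Delta_l^{-1}\omega_l$, by \eqref{eq:GN2} and the inviscid damping term.
\item It fails for $\partial_y\Delta_l^{-1}\omega_l$. Inviscid damping controls $l\,\partial_y\Delta_l^{-1}\omega_l$ in $L^2_y$ but none of its $y$-derivatives. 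The extra $y$-regularity needed for $L^\infty_y$ must come from $\|\partial_y^2\Delta_l^{-1}\omega_l\|_{L^2_y}\le\|\omega_l\|_{L^2_y}$. That quantity is square-integrable in time only through enhanced dissipation, at cost $A^{\frac16}$. After the Gagliardo--Nirenberg square root you lose $A^{\frac1{12}}$ and end with $A^{-\frac5{12}}$ rather than $A^{-\frac12}$.
\end{itemize}
To reach $A^{-\frac12}$, keep this velocity factor in $L^\infty_t$, using $\|\partial_y\Delta_l^{-1}\omega_l\|_{L^\infty_y}\le|l|^{-\frac12}\|\omega_l\|_{L^2_y}$, and put both $f$-factors in $L^2_t$.
\begin{itemize}
\item On $\mathcal{R}_{\text{res}}\cup\mathcal{R}_{\text{HL}}$ one has $|k-l|\le2|k|$, so split $|k-l|\lesssim|k|^{\frac13}|k-l|^{\frac23}$. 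Then use $\||D_x|^{\frac13}f\|_{L^2L^2}\lesssim A^{\frac16}\|f\|_{X_{a,m,\epsilon}}$ and the interpolated bound $\||D_x|^{\frac23}f\|_{L^2L^2}\lesssim A^{\frac13}\|f\|_{X_{a,m,\epsilon}}$, all with the weights of $X_{a,m,\epsilon}$. This gives $A^{-1}\cdot A^{\frac16}\cdot A^{\frac13}=A^{-\frac12}$.
\item On $\mathcal{R}_{\text{LH}}$, move the factor $|k-l|\sim|l|$ onto the velocity.
\end{itemize}
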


Next we give a direct estimate for nonlinear terms of the form $(|D_x|^{1/3}u) \cdot \nabla f$.
	
	\begin{Lem}[Lemma A.3 in \cite{CWY2025b}] \label{lem:est of dx u}
		Let $u$ be determined by $\eqref{eq:main}_2$ and $0<a<\tfrac{1}{16(1+2 \pi)}$. For $0<\epsilon<\frac12<m$, $0\leq t\leq T$ and certain smooth enough $f$,  
		\begin{align*}    
				&\quad\frac1A \int_{0}^{t} \left|\Re\left(|D_x|^{\frac13}u \cdot \nabla f \left\lvert\, \mathcal{M} e^{2 a A^{-\frac{1}{3}}\left|D_x\right|^{\frac{2}{3} }t }\langle D_x\rangle^{2 m   }\langle\frac{1}{D_x}\rangle^{2 \epsilon} |D_x|^\frac13  f  \right.\right)\right|   dt\\
                &\lesssim \frac{1}{A}\int_{0}^{t} \int_{\mathbb{R}^2} e^{2 a A^{-\frac{1}{3}}|k|^{\frac{2}{3} } t }\langle k\rangle^{2 m}\langle\frac{1}{k}\rangle^{2 \epsilon}|k|^\frac13 |l|^{\frac13}\left|\int_{\mathbb{R}} \mathcal{M}\left(k, D_y\right) f_k(y) \partial_y \Delta_l^{-1} \omega_l(y)  (k-l) f_{k-l}(y) d y\right| d k d l dt\\
 &\quad + \frac{1}{A}\int_{0}^{t} \int_{\mathbb{R}^2} e^{2 a A^{-\frac{1}{3}}|k|^{\frac{2}{3} } t }\langle k\rangle^{2 m}\langle\frac{1}{k}\rangle^{2 \epsilon}|k|^\frac13 |l|^{\frac13}\left| \int_{\mathbb{R}} \mathcal{M}\left(k, D_y\right) f_k(y) l \Delta_l^{-1} \omega_l(y)  \py f_{k-l}(y) d y  \right|d k d l dt\\
				&\lesssim \frac1{A^\frac12}\|    \omega \|_{X_{a,m,\epsilon}}    \lt\|    |D_x|^\frac13 f \rt\|_{X_{a,m,\epsilon}}^2   . \alabel{eq:dx13 uf-estimate}
		\end{align*}
	\end{Lem}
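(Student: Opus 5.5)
The plan is to follow the proof of Proposition \ref{prop:est of f} for the $u\cdot\nabla f$ term and to track the extra factor $|k|^{1/3}|l|^{1/3}$ that comes from the two fractional derivatives.

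\emph{Fourier reduction.} Write $u=\nabla^\perp\Delta^{-1}\omega$, so that $u_l=(\partial_y\Delta_l^{-1}\omega_l,\,-il\Delta_l^{-1}\omega_l)$. Taking the partial Fourier transform in $x$ gives
\[
\mathcal{F}_{x\to k}\bigl((|D_x|^{1/3}u)\cdot\nabla f\bigr)=\int |l|^{1/3}\bigl(u^1_l\,i(k-l)f_{k-l}+u^2_l\,\partial_yf_{k-l}\bigr)\,dl .
\]
Pairing with $\mathcal{M}e^{2aA^{-1/3}|D_x|^{2/3}t}\langle D_x\rangle^{2m}\langle 1/D_x\rangle^{2\epsilon}|D_x|^{1/3}f$ and using Plancherel in $x$ produces the factor $|k|^{1/3}|l|^{1/3}$ together with the two $dy$-integrals. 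This gives the first inequality directly, by the triangle inequality and the self-adjointness of $\mathcal{M}(k,D_y)$.

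\emph{Bilinear bound.} Next I split the $(k,l)$ integration into the regions $\mathcal{R}_{\rm res}$, $\mathcal{R}_{\rm HL}$ and $\mathcal{R}_{\rm LH}$ of Section~\ref{subsubsec132}. The weights are handled as follows.
\begin{itemize}
\item Since $|k|^{2/3}\le |k-l|^{2/3}+|l|^{2/3}$, we have $e^{aA^{-1/3}|k|^{2/3}t}\le e^{aA^{-1/3}|k-l|^{2/3}t}e^{aA^{-1/3}|l|^{2/3}t}$.
\item $\langle k\rangle^m\lesssim\langle k-l\rangle^m+\langle l\rangle^m$.
\item $\langle 1/k\rangle^\epsilon$ is distributed to the smaller frequency; in the low-frequency cases it gives an integrable singularity because $\epsilon<1/2$.
\end{itemize}
In the regions where $|k|\sim|k-l|$, i.e.\ Region~I and Region~II, I attach $|k|^{1/3}$ to $f_k$ and convert $|l|^{1/3}\lesssim|k-l|^{1/3}$ onto $f_{k-l}$. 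The $f$ factors then reproduce $|D_x|^{1/3}f$, and the velocity factor is estimated exactly as in \eqref{eq:uf-estimate}. This uses $\|\,|l|\,\|\Delta_l^{-1}\omega_l\|_{L^\infty_y}\|_{L^1_l}$ and $\|\partial_y\Delta_l^{-1}\omega_l\|_{L^\infty_y}$, controlled through Gagliardo--Nirenberg in $y$ and the $\partial_x\nabla\Delta^{-1}\omega$ and $A^{-1/3}\,|D_x|^{1/3}\omega$ parts of the $X$-norm. The result is the bound $A^{-1/2}\|\omega\|_X\||D_x|^{1/3}f\|_X^2$, after Cauchy--Schwarz in time and the $L^\infty L^2$ and $L^2L^2$ components of $X$. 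The leftover factor $A^{-1}$ combined with the dissipative pieces ($A^{-1}\|\nabla\cdot\|^2$ and $A^{-1/3}\||D_x|^{1/3}\cdot\|^2$) yields the $A^{-1/2}$ power.

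\emph{Main obstacle.} The difficulty is Region~III, where $|l|\sim|k-l|\gg|k|$. Here the factor $|l|^{1/3}$ cannot be given to $f_{k-l}$ for free, and $|k|^{1/3}$ is small. I would first try to absorb $|l|^{1/3}$ into the velocity. Since $|l|^{1/3}\,|l|\Delta_l^{-1}$ is still bounded by $|l|^{-2/3}$ relative to $\omega_l$, the velocity gains regularity and we can use the $A^{-1/3}\||D_x|^{1/3}\omega\|^2_{L^2L^2}$ component. Alternatively, $|l|^{1/3}\lesssim|k-l|^{1/3}$ can be placed on $f_{k-l}$ while $|k|^{1/3}\le|k-l|^{1/3}$ is kept on $f_k$, with the $\langle 1/k\rangle^{\epsilon}$ weight compensated by $\langle 1/(k-l)\rangle^{-\epsilon}$ via integrability in $k$ near zero, using $\epsilon<1/2$.

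If the $\partial_y f_{k-l}$ term in this region loses too much, I would integrate by parts in $y$ using $\partial_yu^2_l=-ilu^1_l$ (divergence-free) to move $\partial_y$ onto $\mathcal{M}(k,D_y)f_k$. That factor is then controlled by $A^{-1/2}\|\nabla|D_x|^{1/3}f\|_{L^2L^2}$, which balances the remaining $A^{-1/2}$.
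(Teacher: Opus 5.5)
The paper does not reprove this lemma; it is imported from \cite{CWY2025b}. I therefore assess your sketch on its own, and it has a genuine gap: you have swapped the frequency geometry of Regions II and III.

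In $\mathcal{R}_{\text{HL}}(k,l)=\{|k|>2|k-l|\}$ one has $\frac12|k|<|l|<\frac32|k|$ and $|k-l|<\frac12|k|<|l|$. So $|k|$ is not comparable to $|k-l|$ there, and your step ``convert $|l|^{1/3}\lesssim|k-l|^{1/3}$ onto $f_{k-l}$'' is false: the ratio $|l|/|k-l|$ is unbounded. Conversely, in $\mathcal{R}_{\text{LH}}(k,l)=\{2|k|<|k-l|\}$ one has $\frac12|k-l|<|l|<\frac32|k-l|$, so the conversion \emph{is} free there. The region you single out as the main obstacle is easy, and the one you treat as routine is the only hard one.

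In fact, $(|D_x|^{1/3}f)_k=|k|^{1/3}f_k$, $\mathcal{M}(k,D_y)$ acts only in $y$, and the integrand is nonnegative. Hence on $\mathcal{R}_{\text{res}}\cup\mathcal{R}_{\text{LH}}$ the bound $|k|^{1/3}|l|^{1/3}\lesssim|k|^{1/3}|k-l|^{1/3}$ dominates everything by the middle expression of the trilinear bound \eqref{eq:uf-estimate} applied to $|D_x|^{1/3}f$, with no further work. In $\mathcal{R}_{\text{HL}}$, however, $f_{k-l}$ is the low-frequency factor, and your proposal contains no valid treatment of the extra $|l|^{1/3}\sim|k|^{1/3}$.

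The missing step is to put $|l|^{1/3}$ on the velocity in that region, through the inviscid-damping component $\|\partial_x\nabla\Delta^{-1}\omega\|_{L^2L^2}$, which costs no power of $A$. Routing it through $A^{-1/3}\||D_x|^{1/3}\omega\|_{L^2L^2}^2$, as you suggest, costs $A^{1/6}$. Combined with the $A^{1/2}$ from $\nabla|D_x|^{1/3}f$, that only yields $A^{-1/3}$, so your $A$-count is not automatic.

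Concretely, the construction goes as follows.
\begin{enumerate}
\item By \eqref{eq:GN2}, $|l|^{1/3}\|l\Delta_l^{-1}\omega_l\|_{L^\infty_y}\le|l|^{-1/6}\|l\nabla_l\Delta_l^{-1}\omega_l\|_{L^2_y}$.
\item Trivially, $|l|^{1/3}\|\partial_y\Delta_l^{-1}\omega_l\|_{L^2_y}\le|l|^{-2/3}\|l\nabla_l\Delta_l^{-1}\omega_l\|_{L^2_y}$.
\item Since $|l|>|k-l|$ in $\mathcal{R}_{\text{HL}}$, these negative powers pass to $|k-l|$. Using \eqref{eq:GN2} on $f_{k-l}$ in the first term, the low-frequency factor in both terms is then bounded by $|k-l|^{-1/2}\,|k-l|^{1/3}\|\nabla_{k-l}f_{k-l}\|_{L^2_y}$.
\item Sum this factor in $L^1_{k-l}$ using $\big\||k-l|^{-1/2}\langle k-l\rangle^{-m}\langle\frac{1}{k-l}\rangle^{-\epsilon}\big\|_{L^2}<\infty$; this is where $\epsilon>0$ enters.
\item Handle the weights with \eqref{eq:trick2} and apply Young's inequality. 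Place $|D_x|^{1/3}f$ at frequency $k$ in $L^\infty L^2$, $\nabla|D_x|^{1/3}f$ in $L^2L^2$ (cost $A^{1/2}$), and $\omega$ in $L^2L^2$ via inviscid damping (cost $A^0$).
\end{enumerate}
This gives exactly $A^{-1}\cdot A^{1/2}\|\omega\|_{X_{a,m,\epsilon}}\||D_x|^{1/3}f\|_{X_{a,m,\epsilon}}^2$.
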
 
    \subsection{$L^\infty$ embedding}

	The following is a series of lemmas about the $L^\infty$ embedding, which will be frequently used.
	\begin{Lem} \label{lem:l infty embedding}
		For any $0< \delta_1 < \delta_2$ and for certain sufficiently smooth function $f= f(x, y)$ that vanishes at infinity, there exists a positive constant $C$ depending on $\delta_1,\, \delta_2$, such that
		$$
		\|f \|_{L^{\infty}} \leq C \lt\|\left|D_x\right|^{\delta_1} \langle \frac{1}{D_x} \rangle^{\delta_2} \nabla f \rt\|_{L^2},
		$$
		where $\lt\|\left|D_x\right|^{\delta_1} \langle \frac{1}{D_x} \rangle^{\delta_2} \nabla f \rt\|_{L^2}^2:=\lt\|\left|k\right|^{\delta_1} \langle \frac{1}{k} \rangle^{\delta_2} (k,\xi) \hat{f} \rt\|_{L^2}^2$.
	\end{Lem}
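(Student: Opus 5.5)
We bound $f$ pointwise by the $L^1$ norm of its full Fourier transform and then apply Cauchy--Schwarz with a weight that is square-integrable near $k=0$, near $|k|=\infty$, and in $\xi$. Since $f$ is smooth and vanishes at infinity, Fourier inversion gives
\begin{align*}
|f(x,y)| \le (2\pi)^{-1}\int_{\mathbb{R}^2} |\hat f(k,\xi)|\,dk\,d\xi .
\end{align*}
Set $W(k,\xi):=|k|^{\delta_1}\langle \frac1k\rangle^{\delta_2}|(k,\xi)|$. We write $|\hat f| = W^{-1}\cdot W|\hat f|$ and use Cauchy--Schwarz:
\begin{align*}
\|f\|_{L^\infty}\lesssim \Big(\int_{\mathbb{R}^2} W(k,\xi)^{-2}\,dk\,d\xi\Big)^{1/2}\lt\|\left|D_x\right|^{\delta_1} \langle \frac{1}{D_x} \rangle^{\delta_2} \nabla f \rt\|_{L^2}.
\end{align*}
It therefore suffices to show that $I:=\int W^{-2}\,dk\,d\xi<\infty$.

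For fixed $k\neq0$, we integrate first in $\xi$:
\begin{align*}
\int_{\mathbb{R}}\frac{d\xi}{k^2+\xi^2}=\frac{\pi}{|k|}.
\end{align*}
This gives
\begin{align*}
I=\pi\int_{\mathbb{R}} |k|^{-1-2\delta_1}\langle \tfrac1k\rangle^{-2\delta_2}\,dk .
\end{align*}
We split the $k$-integral into two regions.
\begin{itemize}
\item For $|k|\ge1$, we have $\langle 1/k\rangle\sim1$, so the integrand is comparable to $|k|^{-1-2\delta_1}$. This is integrable at infinity because $\delta_1>0$.
\item For $|k|\le1$, we have $\langle 1/k\rangle\sim|k|^{-1}$, so the integrand is comparable to $|k|^{-1-2\delta_1+2\delta_2}$. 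This is integrable at $0$ exactly when $2(\delta_2-\delta_1)>0$, which is the hypothesis $\delta_1<\delta_2$.
\end{itemize}
Combining the two regions gives $I\le C(\delta_1,\delta_2)$, which proves the claim.

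The main point to verify is the low-frequency balance. The factor $|(k,\xi)|$ only supplies decay $|k|^{-1}$ after the $\xi$-integration, which is borderline non-integrable at both ends. The power $|k|^{\delta_1}$ fixes the high end but worsens the low end, and this is repaired by $\langle 1/k\rangle^{\delta_2}$ precisely when $\delta_2>\delta_1$. The remaining steps are routine, apart from justifying Fourier inversion pointwise. That follows from $\hat f\in L^1$, which is itself the Cauchy--Schwarz bound above whenever the right-hand side is finite, together with $f$ vanishing at infinity, which identifies $f$ with the inverse transform rather than with it plus a constant.
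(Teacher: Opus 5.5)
Your proof is correct and is essentially the paper's argument: you bound $\|f\|_{L^\infty}$ by $\|\hat f\|_{L^1}$, apply Cauchy--Schwarz against the weight $|k|^{\delta_1}\langle \frac{1}{k}\rangle^{\delta_2}|(k,\xi)|$, use $\int_{\mathbb{R}}(k^2+\xi^2)^{-1}\,d\xi=\pi/|k|$, and conclude from the integrability of $|k|^{-1-2\delta_1}\langle \frac{1}{k}\rangle^{-2\delta_2}$, which holds precisely because $0<\delta_1<\delta_2$. The only cosmetic difference is that the paper applies Cauchy--Schwarz iteratively (first in $\xi$ for fixed $k$, then in $k$) rather than once on $\mathbb{R}^2$, and this leads to the identical computation.
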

	\begin{proof}
		From Fourier inversion formula and Plancherel identity, it follows that
		$$
		\begin{aligned}
			\|f\|_{L^{\infty}} & \leq   \int_{\mathbb{R}^2 }\left|\widehat{f}_k(\xi)\right| dk d\xi  \leq   \int_{\mathbb{R}^2 } \frac{1}{\sqrt{k^2+\xi^2}}\left|(k, \xi)\widehat{f}_k(\xi)\right| d k d\xi \\
			&\lesssim \int_{  \mathbb{R} } \frac{1}{|k|^{\frac{1}{2}}}\lt\| (k, \xi)\widehat{  f }_k( \xi)\rt\|_{L_\xi^2} dk \lesssim \int_{  \mathbb{R} } \frac{1}{|k|^{\frac{1}{2} +  {\delta_1} } \langle \frac1{k}\rangle^{ {\delta_2} }} \lt\| |k|^{  {\delta_1} } \langle \frac1{k}\rangle^{ {\delta_2} } (k, \xi)\widehat{  f }_k( \xi) \rt\|_{L_\xi^2} dk \\
			& \lesssim  \lt\|\left|D_x\right|^{\delta_1} \langle \frac{1}{D_x} \rangle^{\delta_2} \nabla f \rt\|_{L^2},
		\end{aligned}
		$$
        where we used 
        $$\int_{  \mathbb{R} } \frac{1}{|k|^{1 +  {2\delta_1} } \langle \frac1{k}\rangle^{ {2\delta_2} }}dk \lesssim 1 $$
        due to $0< \delta_1 < \delta_2$.
        
	\end{proof}
	\begin{Cor} \label{cor:l infty}
		Let $u \in H^1(\mathbb{R}^2)$ be determined by $\eqref{eq:main}_2$. Then, there exist positive constants $C$ depending on $\epsilon$ and $m$, such that
		\begin{equation*}\begin{aligned}
				\| u \|_{L^\infty} &\leq C \|   \omega \|_{Y_{m, \epsilon}}, \quad 0< \epsilon \leq 2m, \\
				\| \nabla d \|_{ L^\infty } \leq   &    C
                \lt\|    \lt(\px^2,   \py^2\rt) |D_x|^\frac13  d \rt\|_{{Y_{m, \epsilon}}}, \quad \frac13 <\epsilon \leq 2m,
		\end{aligned}\end{equation*}
        where $Y_{m, \epsilon}$ is defined in \eqref{eq:def of Y}.
	\end{Cor}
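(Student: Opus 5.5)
The plan is to derive both bounds directly from Lemma \ref{lem:l infty embedding}. Fix the exponents $(\delta_1,\delta_2)$ with $0<\delta_1<\delta_2$ separately in each case. Then compare, pointwise on the Fourier side, the weight $|k|^{\delta_1}\langle \frac1k\rangle^{\delta_2}|(k,\xi)|$ with the weight $\langle k\rangle^m\langle\frac1k\rangle^{\epsilon}$ of $Y_{m,\epsilon}$, times the relevant symbol. No nonlinear structure is needed; the corollary reduces to two elementary multiplier inequalities plus Plancherel.

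\emph{Velocity.} Since $u=\nabla^\perp\Delta^{-1}\omega$, each entry of $\nabla u$ has Fourier symbol $\frac{(k,\xi)_i(\xi,-k)_j}{k^2+\xi^2}\hat\omega$, which is bounded in modulus by $|\hat\omega(k,\xi)|$. Apply Lemma \ref{lem:l infty embedding} to $f=u$ (which lies in $H^1$ and vanishes at infinity) with $\delta_2=\epsilon$ and any $0<\delta_1<\min\{\epsilon,m\}$. This gives
\begin{align*}
\|u\|_{L^\infty}\le C\Big\||k|^{\delta_1}\langle\tfrac1k\rangle^{\epsilon}\hat\omega\Big\|_{L^2}\le C\Big\|\langle k\rangle^{m}\langle\tfrac1k\rangle^{\epsilon}\hat\omega\Big\|_{L^2}=C\|\omega\|_{Y_{m,\epsilon}},
\end{align*}
where we used $|k|^{\delta_1}\le\langle k\rangle^{\delta_1}\le\langle k\rangle^m$.

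\emph{Director gradient.} Apply the lemma to $f=\partial_i d$, $i=1,2$. The quantity to control is $|k|^{\delta_1}\langle\frac1k\rangle^{\delta_2}|(k,\xi)|^2|\hat d|$, and we bound $|(k,\xi)|^2|\hat d|\le |k^2\hat d|+|\xi^2\hat d|$, which are the symbols of $\partial_x^2 d$ and $\partial_y^2 d$. The key observation is the identity
\[
|k|^{1/3}\langle\tfrac1k\rangle^{1/3}=(1+k^2)^{1/6}\ge 1 .
\]
It implies $|k|^{1/3}\langle k\rangle^m\langle\frac1k\rangle^{\epsilon}\ge \langle k\rangle^m\langle\frac1k\rangle^{\epsilon-1/3}$. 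So choose $\delta_2=\epsilon-\frac13$, which is positive exactly because $\epsilon>\frac13$, and then any $0<\delta_1<\delta_2$. Since $\delta_1<\frac16<m$, we have $|k|^{\delta_1}\le\langle k\rangle^m$, and therefore
\[
|k|^{\delta_1}\langle\tfrac1k\rangle^{\delta_2}\lesssim |k|^{1/3}\langle k\rangle^m\langle\tfrac1k\rangle^{\epsilon}.
\]
Combining these with Plancherel gives
\[
\|\nabla d\|_{L^\infty}\le C\big\|(\partial_x^2,\partial_y^2)|D_x|^{1/3}d\big\|_{Y_{m,\epsilon}}.
\]

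The only delicate point is the low-frequency balance in the second estimate. The factor $|D_x|^{1/3}$ vanishes at $k=0$, so it must be traded against the $\langle\frac1k\rangle^\epsilon$ weight via the identity above. This trade is what forces $\epsilon>\frac13$, while the lemma still needs a strictly positive $\delta_2$ for integrability near $k=0$. The hypothesis $\epsilon\le 2m$ is not actually used by this argument. The constants depend only on $\epsilon$ and $m$ through the choice of $\delta_1,\delta_2$.
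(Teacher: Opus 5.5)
Your proof is correct and follows the paper's route: you apply Lemma \ref{lem:l infty embedding} and compare the Fourier weights pointwise. The paper differs only in its exponent choices ($\delta_1=\frac{\epsilon}{2}$, $\delta_2=\epsilon$ for $u$, and $\delta_1=\frac{\epsilon}{2}-\frac16$, $\delta_2=\epsilon-\frac13$ for $\nabla d$, which is exactly where $\epsilon\le 2m$ enters there), and in leaving implicit the control of the mixed term $\partial_x\partial_y|D_x|^{1/3}d$, which your bound $|(k,\xi)|^2=k^2+\xi^2$ avoids. The one thing to tidy is your step ``$\delta_1<\frac16<m$'', which quietly uses the standing assumption $\epsilon<\frac12<m$; taking $0<\delta_1<\min\{\delta_2,m\}$ removes even that.
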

	\begin{proof}
		By Lemma \ref{lem:l infty embedding} and $\Delta u=\nabla^{\perp}\omega$, 
		we have
		\begin{equation*}\begin{aligned}
				\| u \|_{L^\infty} \lesssim \lt\| |D_x |^{\frac{\epsilon}{2}} \langle \frac{1}{D_x} \rangle^{\epsilon} \nabla u \rt\|_{L^2} \lesssim \|  \omega \|_{Y_{m, \epsilon}}, 
		\end{aligned}\end{equation*}
		provided with $0 < \frac \epsilon 2 \leq m$.
		Moreover, if $ \frac16< \frac{\epsilon}{2} \leq m $, then it follows from Lemma \ref{lem:l infty embedding} that
		\begin{equation*}\begin{aligned}
				\| \px d \|_{L^\infty} &\lesssim  \lt\| |D_x |^{\frac{\epsilon}{2} - \frac16} \langle \frac{1}{D_x} \rangle^{\epsilon - \frac13 } \px \nabla d \rt\|_{L^2} \\
				&\lesssim \lt\|   \px^2 |D_x|^\frac13 d \rt\|_{Y_{m, \epsilon}} +\lt\|  \px \py |D_x|^\frac13  d \rt\|_{Y_{m, \epsilon}}
		\end{aligned}\end{equation*}
		and
		\begin{equation*}\begin{aligned}
				\| \py d \|_{L^\infty} &\lesssim   \lt\| |D_x |^{\frac{\epsilon}{2} - \frac16} \langle \frac{1}{D_x} \rangle^{\epsilon - \frac13 }   \nabla \py d \rt\|_{L^2} \\
				&\lesssim \lt\|    \py^2 |D_x|^\frac13  d \rt\|_{Y_{m, \epsilon}} + \lt\|   \px \py |D_x|^\frac13  d \rt\|_{Y_{m, \epsilon}}.
		\end{aligned}\end{equation*}
	\end{proof}

	\begin{Lem}[see, for example,  Lemma A.2 in \cite{CWY2025b}]
		For $f_l= f_l(y) \in H^1(\mathbb{R})$ (defined in \eqref{eq:def of fk}), it holds that
		\begin{align}
			\| f_l(\cdot)\|_{L^\infty} &\leq    \|   f_l(\cdot)\|_{L^2}^\frac12  \|   \py f_l(\cdot)\|_{L^2}^\frac12, \label{eq:GN1}\\
			\| f_l(\cdot)\|_{L^\infty} &\leq  |l|^{-\frac12} \| \nabla_l f_l(\cdot)\|_{L^2}. \label{eq:GN2}
		\end{align}
	\end{Lem}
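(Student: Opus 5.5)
Both inequalities are one-dimensional statements in $y$ for a fixed horizontal frequency $l$, so I will prove them by the standard fundamental-theorem-of-calculus argument together with Cauchy--Schwarz, with no Fourier analysis in $y$ needed.

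For \eqref{eq:GN1}, I first take $f_l$ smooth with compact support (or decaying at infinity) and then pass to general $f_l\in H^1(\mathbb{R})$ by density, using that $H^1(\mathbb{R})\hookrightarrow C_0(\mathbb{R})$. For such $f_l$ and any $y\in\mathbb{R}$,
\begin{align*}
|f_l(y)|^2=\int_{-\infty}^{y}\partial_s|f_l(s)|^2\,ds=2\Re\int_{-\infty}^{y}\overline{f_l}\,\partial_s f_l\,ds,
\end{align*}
and the same identity holds with the integral taken over $(y,\infty)$ and a minus sign. Averaging the two expressions gives
\begin{align*}
|f_l(y)|^2\le\int_{\mathbb{R}}|f_l|\,|\partial_y f_l|\,ds\le\|f_l\|_{L^2}\|\partial_y f_l\|_{L^2}
\end{align*}
by Cauchy--Schwarz. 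This is exactly the bound with constant $1$; the averaging step is the only point needed to avoid a factor $\sqrt2$.

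For \eqref{eq:GN2}, I recall that $\nabla_l=(l,\partial_y)$, so $\|\nabla_l f_l\|_{L^2}^2=l^2\|f_l\|_{L^2}^2+\|\partial_y f_l\|_{L^2}^2$. Applying AM--GM to the right side of \eqref{eq:GN1} gives
\begin{align*}
\|f_l\|_{L^2}\|\partial_y f_l\|_{L^2}=|l|^{-1}\,\big(|l|\|f_l\|_{L^2}\big)\|\partial_y f_l\|_{L^2}\le \frac{1}{2|l|}\big(l^2\|f_l\|_{L^2}^2+\|\partial_y f_l\|_{L^2}^2\big)=\frac{1}{2|l|}\|\nabla_l f_l\|_{L^2}^2 .
\end{align*}
Taking square roots yields $\|f_l\|_{L^\infty}\le (2|l|)^{-1/2}\|\nabla_l f_l\|_{L^2}\le |l|^{-1/2}\|\nabla_l f_l\|_{L^2}$ for $l\neq0$; when $l=0$ the right-hand side is infinite and the inequality is trivial.

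Neither step is a real obstacle. The only care needed is justifying the vanishing of $f_l$ at $\pm\infty$ so that the boundary terms in the fundamental theorem of calculus drop out. This follows from density of $C_c^\infty$ in $H^1(\mathbb{R})$ and the continuity of both sides of the inequalities in the $H^1$ norm.
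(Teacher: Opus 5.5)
Your argument is correct: the averaged fundamental-theorem-of-calculus identity plus Cauchy--Schwarz gives \eqref{eq:GN1} with constant $1$, and AM--GM then gives \eqref{eq:GN2}, even with the better constant $(2|l|)^{-1/2}$. The paper has no proof of its own and simply cites Lemma A.2 of \cite{CWY2025b}; your standard one-dimensional Gagliardo--Nirenberg argument is essentially the intended route.
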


    Using \eqref{eq:GN1}, we get the following corollary immediately.
	\begin{Cor} \label{cor:L infty}
		For certain sufficiently smooth function $f_l= f_l(y)$, $\alpha,\beta\geq0$ and $-m<\frac{\alpha+\beta-1}{2}<\epsilon$, we have
		\begin{align*}
			\lt\|e^{a A^{-\frac{1}{3}}|l|^{\frac{2}{3} } t }     \| f_l \|_{L_y^{\infty}}\rt\|_{L_l^1} 
			& \leq \lt\|e^{a A^{-\frac{1}{3}}|l|^{\frac{2}{3} }t}\langle l\rangle^m\langle\frac{1}{l}\rangle^\epsilon     \lt\|  |l|^\alpha  f_l \rt\|_{L_y^2}^\frac12 \|  |l|^\beta \py  f_l \|_{L_y^2}^\frac12 \rt\|_{L_l^2}\lt\| |l|^{-\frac{\alpha + \beta}{2}} \langle l\rangle^{-m}\langle\frac{1}{l}\rangle^{-\epsilon} \rt\|_{L_l^2}  \\
			&\lesssim  \lt\|e^{a A^{-\frac{1}{3}}\left|D_x\right|^{\frac{2}{3} }t}   |D_x|^\alpha f\rt\|_{Y_{m,\epsilon}}^\frac12 \lt\|e^{a A^{-\frac{1}{3}}\left|D_x\right|^{\frac{2}{3} }t}   |D_x|^\beta \py f\rt\|_{Y_{m,\epsilon}}^\frac12  .
		\end{align*}
	\end{Cor}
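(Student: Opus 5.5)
The plan is to work mode by mode in the horizontal frequency $l$ and then pass to the $L^1_l$ norm by Cauchy--Schwarz. Fix $t$ and $l$, and insert the weights
\[
1=\Big(e^{aA^{-\frac13}|l|^{\frac23}t}\langle l\rangle^m\langle\tfrac1l\rangle^\epsilon |l|^{\frac{\alpha+\beta}{2}}\Big)\cdot\Big(e^{-aA^{-\frac13}|l|^{\frac23}t}\langle l\rangle^{-m}\langle\tfrac1l\rangle^{-\epsilon}|l|^{-\frac{\alpha+\beta}{2}}\Big).
\]
First apply \eqref{eq:GN1} to $f_l$:
\[
\|f_l\|_{L^\infty_y}\le \|f_l\|_{L^2_y}^{\frac12}\|\partial_y f_l\|_{L^2_y}^{\frac12}=|l|^{-\frac{\alpha+\beta}{2}}\,\big\||l|^\alpha f_l\big\|_{L^2_y}^{\frac12}\big\||l|^\beta\partial_y f_l\big\|_{L^2_y}^{\frac12}.
\]
Multiplying by the exponential weight and integrating in $l$, Cauchy--Schwarz in $l$ gives the first inequality exactly as stated, with the product of the first factor and the leftover weight $|l|^{-\frac{\alpha+\beta}{2}}\langle l\rangle^{-m}\langle\frac1l\rangle^{-\epsilon}$.

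The second step is to check that
\[
\Big\| |l|^{-\frac{\alpha+\beta}{2}}\langle l\rangle^{-m}\langle\tfrac1l\rangle^{-\epsilon}\Big\|_{L^2_l}<\infty.
\]
Near $l=0$ we have $\langle\frac1l\rangle^{-\epsilon}\sim|l|^{\epsilon}$, so the square of the integrand behaves like $|l|^{2\epsilon-(\alpha+\beta)}$. This is integrable iff $2\epsilon-(\alpha+\beta)>-1$, i.e.\ $\frac{\alpha+\beta-1}{2}<\epsilon$. For $|l|\to\infty$ the square behaves like $|l|^{-(\alpha+\beta)-2m}$, integrable iff $\alpha+\beta+2m>1$, i.e.\ $-m<\frac{\alpha+\beta-1}{2}$. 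These are precisely the hypotheses, and the constant depends only on $\alpha,\beta,m,\epsilon$.

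Finally, for the second inequality, apply Hölder in $l$ with exponents $(4,4)$ to the product $\||l|^\alpha f_l\|^{1/2}\,\||l|^\beta\partial_yf_l\|^{1/2}$, each factor carrying half of the weight $e^{aA^{-1/3}|l|^{2/3}t}\langle l\rangle^m\langle\frac1l\rangle^\epsilon$. This bounds the $L^2_l$ norm by
\[
\Big\|e^{(\cdot)}\langle l\rangle^m\langle\tfrac1l\rangle^\epsilon |l|^\alpha\|f_l\|_{L^2_y}\Big\|_{L^2_l}^{\frac12}\,\Big\|e^{(\cdot)}\langle l\rangle^m\langle\tfrac1l\rangle^\epsilon |l|^\beta\|\partial_y f_l\|_{L^2_y}\Big\|_{L^2_l}^{\frac12}.
\]
By Plancherel in $x$ these are the $Y_{m,\epsilon}$ norms of $e^{aA^{-1/3}|D_x|^{2/3}t}|D_x|^\alpha f$ and $e^{aA^{-1/3}|D_x|^{2/3}t}|D_x|^\beta\partial_y f$. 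No step is a real obstacle; the only point needing care is the endpoint bookkeeping in the integrability check, which dictates the stated range of $\alpha+\beta$.
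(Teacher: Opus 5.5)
Your proposal is correct and follows the paper's own route. The paper presents this corollary as an immediate consequence of \eqref{eq:GN1}: the first inequality is \eqref{eq:GN1} followed by Cauchy--Schwarz in $l$, and the second is H\"older in $l$ with Plancherel. Your endpoint check that $|l|^{-\frac{\alpha+\beta}{2}}\langle l\rangle^{-m}\langle\frac1l\rangle^{-\epsilon}\in L^2_l$ is exactly the condition $-m<\frac{\alpha+\beta-1}{2}<\epsilon$, which the paper leaves implicit.
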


	\subsection{Inequalities  in different frequency regions}

    Recall the definitions of $\mathcal{R}_{\text{res}}(k,l)$, $\mathcal{R}_{\text{HL}}(k,l) $ and $\mathcal{R}_{\text{LH}}(k,l) $ in the last Section \ref{subsubsec132}, and we have the following inequality estimates between different frequencies. 
\begin{Lem}[Estimates in frequency regions] \label{lem:freq_estimates}
 		The following inequalities hold in the respective regions:
		
		\begin{enumerate}
			\item \textbf{In $\mathcal{R}_{\text{res}}(k,l)$ :} For any $s, s_1,s_2 \ge 0$, 
			\begin{gather*}
				|l|^{s} \lesssim |k|^{s}\sim |k-l|^{s}\quad \text{and}\quad
				\langle k\rangle^{s_1 }\langle\frac{1}{k}\rangle^{s_2} \lesssim \langle k-l\rangle^{s_1 }\langle\frac{1}{k-l}\rangle^{s_2}; \alabel{eq:trick1}
			\end{gather*}
			
			\item \textbf{In $\mathcal{R}_{\text{HL}}(k,l) $:} For any $s, s_1,s_2 \ge 0$, 
			\begin{gather*}
				|k-l|^{s} \lesssim |k|^{s}\sim |l|^{s} \quad \text{and}\quad
				\langle k\rangle^{s_1 }\langle\frac{1}{k}\rangle^{s_2} \lesssim \langle l\rangle^{s_1 }\langle\frac{1}{l}\rangle^{s_2}; \alabel{eq:trick2}
			\end{gather*}
			
			\item \textbf{In $\mathcal{R}_{\text{LH}}(k,l) $:} For any $s_1, s_2, s_3 \ge 0$ and $s = s_1 + s_2$, 
			\begin{gather*}
				|k|^{s} \lesssim |k-l|^{s}\sim |l|^{s},\quad
				\langle k \rangle^{2s } \lesssim  \langle l \rangle^{s} \langle k-l \rangle^{s}, \alabel{eq:trick3}\\
				1 \leq \langle \frac{1}{k-l} \rangle^{s} |k-l|^{s} \lesssim \langle\frac{1}{l}\rangle^{s_3}\langle\frac{1}{k-l}\rangle^{s_3}\left(1+\langle\frac{1}{k}\rangle^{s-2 s_3} \right)|k-l|^{s_1 }|l|^{s_2}. \alabel{eq:trick3.5}
			\end{gather*}
		\end{enumerate}
	\end{Lem}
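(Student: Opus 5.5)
The plan is to verify each inequality directly from the defining constraints of the three regions. Everything reduces to two elementary facts. First, $\langle\cdot\rangle$ is monotone in $|\cdot|$, while $\langle 1/\cdot\rangle$ is monotone decreasing in $|\cdot|$. Second, if two real numbers $p,q$ satisfy $|p|\sim|q|$, then $\langle p\rangle\sim\langle q\rangle$ and $\langle 1/p\rangle\sim\langle 1/q\rangle$, because $1+|p|^{-2}\le 1+4|q|^{-2}\le 4(1+|q|^{-2})$ when $|q|\le 2|p|$. I will also use throughout that $k=(k-l)+l$.

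\emph{Region $\mathcal{R}_{\rm res}$.} Here $\tfrac12|k-l|\le|k|\le 2|k-l|$, so $|k|\sim|k-l|$ immediately. Then $|l|\le|k|+|k-l|\le 3|k|$, which gives $|l|^s\lesssim|k|^s$. By the second fact, $\langle k\rangle^{s_1}\langle 1/k\rangle^{s_2}\lesssim\langle k-l\rangle^{s_1}\langle 1/(k-l)\rangle^{s_2}$, with constants depending on $s_1,s_2$.

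\emph{Region $\mathcal{R}_{\rm HL}$.} Here $|k-l|<\tfrac12|k|$, so $|l|\ge|k|-|k-l|>\tfrac12|k|$ and $|l|\le\tfrac32|k|$. Hence $|k|\sim|l|$ and $|k-l|\lesssim|k|$. The weight comparison with $l$ follows exactly as in the resonant region.

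\emph{Region $\mathcal{R}_{\rm LH}$.} Here $2|k|<|k-l|$, so $|l|\ge|k-l|-|k|>\tfrac12|k-l|$ and $|l|\le\tfrac32|k-l|$. Thus $|l|\sim|k-l|\gtrsim|k|$, which gives the first relation in \eqref{eq:trick3}. For the second, $\langle k\rangle^{2s}\lesssim\langle l\rangle^s\langle k-l\rangle^s$ follows from $\langle k\rangle\le\langle l\rangle$ and $\langle k\rangle\le\langle k-l\rangle$, up to constants.

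The main obstacle is \eqref{eq:trick3.5}. The lower bound $1\le\langle 1/(k-l)\rangle^s|k-l|^s$ holds because $\langle 1/p\rangle|p|=\sqrt{1+p^2}\ge1$. For the upper bound, write $\langle 1/(k-l)\rangle^s|k-l|^s=\langle k-l\rangle^s$, and split into two cases.

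\emph{Case $|k-l|\ge1$.} Then $\langle k-l\rangle^s\sim|k-l|^s\sim|k-l|^{s_1}|l|^{s_2}$, since $|l|\sim|k-l|$. The remaining factors $\langle 1/l\rangle^{s_3}\langle 1/(k-l)\rangle^{s_3}$ are $\ge1$, so the bound holds.

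\emph{Case $|k-l|<1$.} Then the left side is $\sim1$, and $|l|\sim|k-l|<\tfrac32$. The right side is comparable to
\[
\langle 1/(k-l)\rangle^{2s_3}\bigl(1+\langle 1/k\rangle^{s-2s_3}\bigr)|k-l|^s\sim|k-l|^{s-2s_3}\bigl(1+\langle 1/k\rangle^{s-2s_3}\bigr).
\]
Since $|k|<\tfrac12|k-l|<1$, we have $\langle 1/k\rangle\gtrsim|k|^{-1}\gtrsim|k-l|^{-1}$.
\begin{itemize}
\item If $s\ge 2s_3$, then $|k-l|^{s-2s_3}\langle 1/k\rangle^{s-2s_3}\gtrsim1$, which gives the bound.
\item If $s<2s_3$, then $|k-l|^{s-2s_3}\ge1$ directly, and the term $1$ in the bracket suffices.
\end{itemize}

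All implicit constants depend only on $s,s_1,s_2,s_3$.
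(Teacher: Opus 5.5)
Your proof is correct and follows essentially the paper's route: the first two regions and \eqref{eq:trick3} are direct computations, and for \eqref{eq:trick3.5} the decisive step is splitting on the sign of $s-2s_3$ together with $\langle \frac{1}{k-l}\rangle\le\langle\frac1k\rangle$ in $\mathcal{R}_{\text{LH}}$. The one difference is that the paper avoids your $|k-l|\gtrless 1$ split: it uses the identity $\langle\frac1p\rangle|p|=\langle p\rangle$ (which you noted) and $|l|\sim|k-l|$ to cancel $\langle k-l\rangle^{s}$ from both sides, which reduces everything to $\langle\frac{1}{k-l}\rangle^{s-2s_3}\lesssim 1+\langle\frac1k\rangle^{s-2s_3}$.
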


    \begin{proof}
It suffices to verify the inequality of \eqref{eq:trick3.5}, and the others follows by direct computations. In fact, it is sufficient to prove
$$
\langle \frac{1}{k-l} \rangle^{s-2s_3}\lesssim  1+\langle\frac{1}{k}\rangle^{s-2 s_3}
$$
due to \eqref{eq:trick3}. One can verify this fact by considering the two cases: $s-2s_3$ is greater than zero or $s-2s_3$ is less than or equal to zero.
    \end{proof}
	
	\subsection{Proof of the main theorem under Proposition \ref{main prop2}} 
	
	In this paper, we use the standard bootstrap arguments to prove the main theorem under Proposition \ref{main prop2}.
	
	
	Let us define $T$ to be the end-point of the largest interval $[0, T]$ such that the following hypotheses hold for all $0 \leq t \leq T$ :
    	\begin{gather*}
		E(t):=       A^{\delta}\||D_x|^{\frac13} d\|_{X_{a,m,\epsilon}}  +   \lt\|\lt(\px^2,\py^2\rt)|D_x|^{\frac13}d \rt\|_{X_{a,m,\epsilon}}   + \| \omega \|_{X_{a,m,\epsilon}}  \leq 2K, \alabel{eq:bootstap2}
	\end{gather*}
	where $a, m, \epsilon, \delta \geq 0$ and  $K\geq 1$  will be determined in the proof. We shall show the following proposition to improve the hypotheses \eqref{eq:bootstap2}.
	\begin{Prop} \label{main prop2}
		Under the same assumptions as in Theorem~\ref{thm:Ericksen--Leslie system}, there exists a positive constant $\bar{A}_1$ satisfying
        $$\bar{A}_1= C(m,\epsilon,\delta)\left(\lt\|    \lt(\px^2,\py^2\rt) |D_x|^\frac13  d_{\mathrm{in}} \rt\|_{Y_{m,\epsilon}} +\|        \omega_{\mathrm{in}} \|_{Y_{m,\epsilon}} 	+1\right) ^{\max\{\frac{2}{\delta-1},24 \}}$$
		such that if $A>\bar{A}_1$ and \eqref{eq:smallness2} holds, then
		$$
		 E(t) \leq K \quad \text{for all } 0 < t < T.
		$$
	\end{Prop}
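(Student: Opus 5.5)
The plan is to combine the three a priori estimates announced in the introduction (Propositions \ref{lem:est of |D_x|13 d}, \ref{lem:est of pypy dx13 d} and \ref{lem:est of omega}) into one inequality for $E(t)$ on $[0,T]$, and then pick $K$ and $\bar A_1$ so that the bootstrap hypothesis \eqref{eq:bootstap2} is strictly improved. Each of these propositions comes from Proposition \ref{prop:est of f} applied to the relevant equation, with $g$ equal to the nonlinear forcing. The transport term $\frac1A u\cdot\nabla f$ is absorbed through \eqref{eq:uf-estimate}. The fractional-commutator pieces $|D_x|^{1/3}u\cdot\nabla d$ are absorbed through Lemma \ref{lem:est of dx u}. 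The terms $|\nabla d|^2(d+e_1)$ and $\nabla\cdot(\nabla d\odot\nabla d)$ are handled with the three-region splitting of Lemma \ref{lem:freq_estimates}, used in the two-layer form \eqref{eq:twolayer}, together with Corollary \ref{cor:L infty}.

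Schematically I expect, on $[0,T]$ and with $D:=\|(\px^2,\py^2)|D_x|^{1/3}d_{\rm in}\|_{Y_{m,\epsilon}}+\|\omega_{\rm in}\|_{Y_{m,\epsilon}}$,
\begin{align*}
\||D_x|^{1/3}d\|_X^2&\lesssim \||D_x|^{1/3}d_{\rm in}\|_{Y_{m,\epsilon}}^2+A^{-1/2}\|\omega\|_X\||D_x|^{1/3}d\|_X^2+A^{-1/6}\,\mathcal N_1,\\
\|(\px^2,\py^2)|D_x|^{1/3}d\|_X^2&\lesssim D^2+A\||D_x|^{1/3}d\|_X\|(\px^2,\py^2)|D_x|^{1/3}d\|_X+A^{-1/2}\|\omega\|_X\|(\px^2,\py^2)|D_x|^{1/3}d\|_X^2+\mathcal N_2,\\
\|\omega\|_X^2&\lesssim D^2+A^{-1/2}\|\omega\|_X^3+A^{-1/3}\|\omega\|_X\|(\px^2,\py^2)|D_x|^{1/3}d\|_X^2.
\end{align*}
Here $\mathcal N_1,\mathcal N_2$ are polynomial in the bootstrap quantities and carry negative powers of $A$.

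Inserting \eqref{eq:bootstap2} is the next step. In the first inequality, $A^{-1/2}\|\omega\|_X\le 2KA^{-1/2}\le\frac12$ once $A\gtrsim K^2$, so that term is absorbed. Multiplying by $A^{2\delta}$ and using \eqref{eq:smallness2} gives $A^{2\delta}\||D_x|^{1/3}d\|_X^2\lesssim 1+A^{2\delta-1/6}\,\mathcal N_1$, and the nonlinear part is at most $A^{-c}K^{p}$ for some fixed $c>0$ and power $p$.

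The second inequality is the most dangerous one. Its cross term satisfies
$$A\||D_x|^{1/3}d\|_X\,\|(\px^2,\py^2)|D_x|^{1/3}d\|_X\le A^{1-\delta}(2K)^2,$$
which is small because $\delta>1$ as soon as $A^{\delta-1}\gtrsim K^2$, i.e. $A\gtrsim K^{2/(\delta-1)}$. This is where the exponent $\frac{2}{\delta-1}$ in $\bar A_1$ comes from. In the third inequality, $A^{-1/3}(2K)^3\le 1$ for $A\gtrsim K^9$, and the remaining polynomial losses, such as $A^{-1/6}K^4$, should produce the exponent $24$.

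To close, set $K=C_0(D+1)$ with $C_0$ the largest implicit constant. Summing the three inequalities then gives $E(t)^2\le \frac{K^2}{4}\cdot 3+o(1)$, hence $E(t)\le K$, provided $A\ge C(m,\epsilon,\delta)(D+1)^{\max\{2/(\delta-1),24\}}=\bar A_1$.

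The main obstacle is establishing the second inequality with only the factor $A$ multiplying the low norm, and no worse power. The commutator $[\py^2,y\px]=2\px\py$, after the time rescaling, pairs $\px\py|D_x|^{1/3}d$ against $\py^2|D_x|^{1/3}d$ with no gain. My first attempt would be to bound it by Cauchy–Schwarz in the $X$-norm using the $A^{-1/3}$ dissipation. If that fails, I would interpolate $\px\py$ between $|D_x|^{1/3}d$ and $\py^2|D_x|^{1/3}d$, which is what produces the product of the low and high norms. The quartic terms in $\mathcal N_2$ also require the two-layer decomposition, so that the weight $\langle k\rangle^m\langle 1/k\rangle^\epsilon$ always lands on a factor measured in $L^2$. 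The condition $\epsilon>\frac13$ is what makes Corollary \ref{cor:l infty} available for the $L^\infty$ factors.
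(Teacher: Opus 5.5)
Your proposal is correct and follows the paper's proof: you insert $E\le 2K$ into Propositions \ref{lem:est of |D_x|13 d}, \ref{lem:est of pypy dx13 d} and \ref{lem:est of omega}, use \eqref{eq:smallness2} for the $A^{\delta}$-weighted piece, and read off the thresholds $\frac{2}{\delta-1}$ and $24$ from $A^{1-\delta}K^2$ and $A^{-1/6}K^4$. (The factor $A$ in the cross term of Proposition \ref{lem:est of pypy dx13 d} comes, in the paper, from an integration by parts followed by the $A^{-1}\|\nabla\cdot\|_{L^2L^2}^2$ part of the $X$-norm on both factors, not from the $A^{-1/3}$ dissipation.) Two bookkeeping points should be made explicit: $A^{2\delta-1/6}\mathcal N_1\lesssim A^{-c}K^{p}$ holds only because every nonlinear term in Proposition \ref{lem:est of |D_x|13 d} contains two factors of $\||D_x|^{1/3}d\|_{X_{a,m,\epsilon}}$; and since $E$ is a sum of three norms rather than an $\ell^2$-sum, you need each piece to be at most $K/3$ (not $E^2\le\frac34K^2+o(1)$), which you arrange by enlarging $C_0$.
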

	\begin{proof}[\bf Proof of Theorem \ref{thm:Ericksen--Leslie system}]
		Proposition \ref{main prop2} with the local well-posedness (see, e.g., \cite{H2011}, \cite{HLX2014} or \cite{HLW2014}) of the system \eqref{eq:main} implies that $T=+\infty$, and thus completes the proof by using Corollary \ref{cor:l infty}.
	\end{proof}


\section{The energy estimate for $|D_x|^{1/3} d$}	 \label{sec.3}

This section is devoted to the proof of the main energy estimate for $|D_x|^{1/3} d$.
The argument is based on a set of preparatory inequalities for weighted Fourier multipliers and their application to the nonlinear terms. The main proposition is as follows.

	\begin{Prop}
	     \label{lem:est of |D_x|13 d}
		Assume that $0\leq t \leq T$. For $0<a<\frac{1}{16(1+2 \pi)}$ and $ \frac16 <\epsilon< \frac12 <m$, 
		\begin{align*} 
			 \lt\|      |D_x|^\frac13 d  \rt\|_{X_{a,m,\epsilon}}^2  
			&\leq  C\lt\|      |D_x|^\frac13  d_{\mathrm{in}} \rt\|_{Y_{m,\epsilon}}^2 	+ \frac{C} {A^\frac12} \lt\|   \omega \rt\|_{X_{a,m,\epsilon}}   \lt \|   |D_x|^\frac13 d\rt \|_{X_{a,m,\epsilon}}^2
			\\
			&\quad + \frac{C}{A^{\frac{1}{3}}}\lt \|    |D_x|^\frac13 d\rt \|_{X_{a,m,\epsilon}}^{2}  \lt  \|   \nabla |D_x|^\frac13 d \rt\|_{X_{a,m,\epsilon}}^2  \\
			&\quad + \frac{C}{A^{\frac{1}{6}}}\lt \|  |D_x|^\frac13 d^1 \rt\|_{X_{a,m,\epsilon}} \lt\|    |D_x|^\frac13 d \rt\|_{X_{a,m,\epsilon}} \lt \|   \nabla |D_x|^\frac13 d 
            \rt\|_{X_{a,m,\epsilon}}  .
		\end{align*}
	\end{Prop}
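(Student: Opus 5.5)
Apply $|D_x|^{1/3}$ to the $d$-equation in \eqref{eq:main}. Since $|D_x|^{1/3}$ is an $x$-Fourier multiplier, it commutes with $y\partial_x$ and with $\Delta$. Hence $f:=|D_x|^{1/3}d$ solves \eqref{eq:f} with
\[
g=-\tfrac1A\Big[|D_x|^{\frac13}(u\cdot\nabla d)-u\cdot\nabla |D_x|^{\frac13}d\Big]+\tfrac1A |D_x|^{\frac13}\big(|\nabla d|^2(d+e_1)\big).
\]
Proposition \ref{prop:est of f} then gives the initial-data term $C\||D_x|^{1/3}d_{\rm in}\|_{Y_{m,\epsilon}}^2$, the transport contribution $A^{-1/2}\|\omega\|_X\||D_x|^{1/3}d\|_X^2$, and the time integral of the pairing of $g$ with $\mathcal M e^{2aA^{-1/3}|D_x|^{2/3}t}\langle D_x\rangle^{2m}\langle 1/D_x\rangle^{2\epsilon}f$. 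It remains to bound that pairing.

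\textbf{Commutator.} On the Fourier side the commutator kernel is $(|k|^{1/3}-|k-l|^{1/3})$, which is bounded by $|l|^{1/3}$. Splitting the $l$-integral into $\mathcal R_{\rm res},\mathcal R_{\rm HL},\mathcal R_{\rm LH}$ reduces these terms to the two integrals on the left of \eqref{eq:dx13 uf-estimate}, with $|k|^{1/3}|l|^{1/3}$ weights redistributed to $f_k$ and $d_{k-l}$ via \eqref{eq:trick1}--\eqref{eq:trick3}. Lemma \ref{lem:est of dx u} then bounds them by $A^{-1/2}\|\omega\|_X\||D_x|^{1/3}d\|_X^2$. Alternatively one can write $|D_x|^{1/3}(u\cdot\nabla d)$ as a sum of $(|D_x|^{1/3}u)\cdot\nabla d$-type and $u\cdot\nabla|D_x|^{1/3}d$-type pieces according to which frequency dominates.

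\textbf{Nonlinearity.} Split $|\nabla d|^2(d+e_1)$ into the cubic part $|\nabla d|^2 d$ and the quadratic part $|\nabla d|^2 d^1$. (The $e_1$-component of the equation, combined with the constraint $|d+e_1|=1$, gives $d^1=-|d|^2/2$, but I will simply keep $d^1$ as a factor.)

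For the \emph{cubic term}, split the $k$-integral into the three regions and use the two-layer decomposition \eqref{eq:twolayer} in the inner convolution. In each region, put the output weight $\langle k\rangle^m\langle1/k\rangle^\epsilon|k|^{1/3}$ onto the largest-frequency factor. Put $|k|^{1/3}$ on $f_k$, and control the remaining factors in $L^1_lL^\infty_y$ by Corollary \ref{cor:L infty}, using the interpolation $\|\cdot\|_{L^\infty_y}\le\|\cdot\|_{L^2}^{1/2}\|\partial_y\cdot\|_{L^2}^{1/2}$ with $\alpha=\beta=1/3$. The dissipation pieces of the $X$ norm supply $\frac1{A^{1/3}}\||D_x|^{2/3}d\|^2_{L^2L^2}$ and $\frac1A\|\nabla|D_x|^{1/3}d\|^2_{L^2L^2}$. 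Counting powers, one factor $\frac1A$ against $A^{1/6}\cdot A^{1/2}\cdot A^{1/2}$ from the three $L^2_t$ norms leaves $A^{-1/3}\||D_x|^{1/3}d\|_X^2\|\nabla|D_x|^{1/3}d\|_X^2$.

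For the \emph{quadratic term}, the same procedure with $d^1$ in $L^\infty$ via Corollary \ref{cor:L infty} (only two $L^2_t$ norms are needed) yields the bound $A^{-1/6}\||D_x|^{1/3}d^1\|_X\||D_x|^{1/3}d\|_X\|\nabla|D_x|^{1/3}d\|_X$.

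\textbf{Main obstacle.} The hard step is the low--high region $\mathcal R_{\rm LH}$ in the nonlinear terms, where the output frequency $k$ is small. There the weight $\langle1/k\rangle^{2\epsilon}$ must be transferred to the input frequencies. I would do this with \eqref{eq:trick3.5}, choosing $s_3=\epsilon$. The condition $\epsilon>\frac16$ is exactly what makes $|k|^{1/3}\langle 1/k\rangle^{\epsilon}$ integrable and lets Corollary \ref{cor:L infty} apply with $\frac{\alpha+\beta-1}{2}<\epsilon$. The two-layer splitting must be carried out carefully so that at most one factor carries a full $\nabla$ and the others carry $|D_x|^{1/3}$ or are in $L^\infty$.
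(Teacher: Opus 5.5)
Your architecture matches the paper's, and the transport part is fine. The commutator bound $\bigl||k|^{1/3}-|k-l|^{1/3}\bigr|\le|l|^{1/3}$ lands directly on the integrals of Lemma~\ref{lem:est of dx u} with $f=d$, so no region splitting is needed there. This is equivalent to the paper's split $|k|^{1/3}\lesssim|k-l|^{1/3}+|l|^{1/3}$ into \eqref{eq:uf-estimate} with $f=|D_x|^{1/3}d$ plus Lemma~\ref{lem:est of dx u}.

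\textbf{Cubic term.} The gap is in the nonlinear part, which carries all the $A$-gains of the proposition, and your bookkeeping there is wrong as written. Three full $L^2_t$ norms cannot be combined into an $L^1_t$ bound, and in any case $A^{-1}A^{1/6}A^{1/2}A^{1/2}=A^{1/6}$, not $A^{-1/3}$. What actually happens (Lemma~\ref{lem:est of dx13 nabla d2 d}) is this:
\begin{itemize}
\item All of $|k|^{2/3}$ goes on $d_k$, so $\||D_x|^{2/3}d\|_{L^2_t}\le A^{1/6}\||D_x|^{1/3}d\|_{X_{a,m,\epsilon}}$.
\item $d_l$ goes in $L^1_lL^\infty_y$ by Corollary~\ref{cor:L infty} with $\alpha=\beta=\frac13$, producing $\|\partial_y|D_x|^{1/3}d\|^{1/2}$.
\item The inner convolution is handled by Lemma~\ref{lem:Preliminary estimates} with $i=0$, $f=g=\nabla d$, producing $\|\partial_y\nabla|D_x|^{1/3}d\|^{1/2}$. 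This is a second derivative, so it is not true that at most one factor carries a full gradient.
\end{itemize}
The two $\partial_y$-factors enter only with power $\frac12$, so they cost $A^{1/4}$ each. The time exponents are $\frac12+\frac14+\frac14=1$, and the total is $A^{-1+\frac16+\frac14+\frac14}=A^{-1/3}$.

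Because no $x$-weight is left for the inner layer, you are in the case $i=0$, which needs $\||\eta|^{-2/3}\langle\eta\rangle^{-m}\langle\frac1\eta\rangle^{-\epsilon}\|_{L^2_\eta}<\infty$. This is where $\epsilon>\frac16$ enters. It is not Corollary~\ref{cor:L infty}: with $\alpha=\beta=\frac13$ one has $\frac{\alpha+\beta-1}{2}=-\frac16$, so any $\epsilon>0$ works there.

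\textbf{Quadratic term.} The quadratic part is $|\nabla d|^2e_1$, not $|\nabla d|^2d^1$. The component $d^1$ appears only through the pairing, as the first component of the test function, at the output frequency $k$. It must therefore be measured in weighted $L^2_k$. Corollary~\ref{cor:L infty}, which is an $L^1$-in-frequency bound for a convolution input, does not apply to it.

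The paper (Lemma~\ref{lem:est of dx13 nabla d2}) keeps $|k|^{1/3}d^1_k$ in $L^\infty_tL^2$. It puts one gradient factor in $L^1_lL^\infty_y$ via Corollary~\ref{cor:L infty} with $(\alpha,\beta)=(\frac23,\frac13)$. That costs $A^{1/12}\cdot A^{1/4}$ through $\||D_x|^{1/3}\nabla|D_x|^{1/3}d\|^{1/2}\|\partial_y\nabla|D_x|^{1/3}d\|^{1/2}$. The other gradient factor goes in $L^2$ with weight $|k-l|^{1/3}$, at cost $A^{1/2}$, and the total is $A^{-1/6}$.

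If you want $d^1_k$ in $L^\infty_y$, the tool is \eqref{eq:GN1}, which costs $A^{1/4}$ via $\|\partial_y|D_x|^{1/3}d^1\|^{1/2}_{L^2_t}$. To recover $A^{-1/6}$ you must then place exactly $|l|^{1/2}$ on one gradient factor, interpolating between $L^\infty_t$ and the $|D_x|^{1/3}$-dissipation at cost $A^{1/12}$. The other factor goes in $L^2_t$ at cost $A^{1/2}$. That allocation differs from the cubic case and does not follow from ``the same procedure''.

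With these corrections the argument is the paper's.
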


    Before proving Proposition \ref{lem:est of |D_x|13 d}, we establish two lemmas concerning the nonlinear terms.

	\begin{Lem}    
		\label{lem:est of dx13 nabla d2 d}
		For $0<a<\frac{1}{16(1+2 \pi)}$ and $\frac16<\epsilon< \frac12<m$,
		\begin{align*}
			& \quad   \frac1A \int_{0}^{t} \left|\Re\left( |D_x|^{\frac13}  \left(  |\nabla d|^2 d \right) \left\lvert\, \mathcal{M} e^{2 a A^{-\frac{1}{3}}\left|D_x\right|^{\frac{2}{3} }t }\langle D_x\rangle^{2 m  }\langle\frac{1}{D_x}\rangle^{2 \epsilon} |D_x|^{\frac13} d\right.\right)\right| dt\\
			&\lesssim \frac{1}{A^{\frac{1}{3}}} \lt\|   |D_x|^\frac13 d \rt\|_{X_{a,m,\epsilon}}^2    \lt\|  \nabla |D_x|^\frac13 d \rt\|_{X_{a,m,\epsilon}}^2  .
		\end{align*}
	\end{Lem}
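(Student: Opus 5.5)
Write $|\nabla d|^2 d=\sum_{i=1,2}(\p d\cdot\p d)\,d$ and treat each $i$ separately. Taking the Fourier transform in $x$ turns the pairing into a triple convolution. With $\mathcal M(k,D_y)$ bounded by \eqref{eq:bound of M}, the integrand is dominated by
\begin{align*}
\int_{\mathbb{R}^3} e^{2aA^{-\frac13}|k|^{\frac23}t}\langle k\rangle^{2m}\langle\tfrac1k\rangle^{2\epsilon}|k|^{\frac23}\Big|\int_{\mathbb R}\mathcal M(k,D_y)d_k\cdot d_l\,(\q d_{k-l-\eta}\cdot\q d_\eta)\,dy\Big|\,dk\,dl\,d\eta ,
\end{align*}
as displayed in Section \ref{subsubsec132}. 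We will have to treat $\mathcal M(k,D_y)d_k$ as a function of $y$, but it is still $L^2_y$-bounded by $\|d_k\|_{L^2_y}$.

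The outer splitting is over $\mathcal R_{\rm res}$, $\mathcal R_{\rm HL}$ and $\mathcal R_{\rm LH}$ for the pair $(k,l)$. The inner splitting is the two-layer decomposition \eqref{eq:twolayer} for the pair $(k-l,\eta)$. In every region the aim is the same distribution of derivatives:
\begin{itemize}
\item the output factor carries the weight $|k|^{\frac13}$ and goes into the enhanced-dissipation part $A^{-\frac13}\||D_x|^{\frac13}\,|D_x|^{\frac13}d\|_{L^2L^2}^2$ of the $X$ norm, so it costs $A^{1/3}$;
\item one of the two gradient factors goes into $A^{-1}\|\nabla|D_x|^{\frac13}d\|_{L^2L^2}^2$, which costs $A^{1/2}$;
\item the remaining two factors are put in $L^\infty_y$ and summed in $L^1$ in frequency, using Corollary \ref{cor:L infty}.
\end{itemize}
For the gradient factor in $L^\infty$ we take $\alpha=\beta=\frac13$, which gives $\|\nabla|D_x|^{\frac13}d\|^{1/2}$ from $L^\infty L^2$ times $\|\nabla|D_x|^{\frac13}d\|^{1/2}$ again. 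For the factor $d_l$ we take $\alpha=\frac13$, $\beta=\frac13$ with $\py d$ (or use \eqref{eq:GN2}). The constraint $-m<\frac{\alpha+\beta-1}{2}<\epsilon$ is exactly where $\epsilon>\frac16$ enters.

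The time exponentials are distributed by $|k|^{2/3}\le|l|^{2/3}+|k-l-\eta|^{2/3}+|\eta|^{2/3}$. Counting powers of $A$, the two $L^2L^2$ factors contribute $A^{1/3}\cdot A^{1/2}$ against the prefactor $A^{-1}$. This seems to leave $A^{-1/6}$ rather than the claimed $A^{-1/3}$. To reach $A^{-1/3}$ one must instead use two $\nabla$-dissipation factors, each costing $A^{1/2}$, and put $d_k$ in $L^\infty L^2$. That requires moving the output weight $|k|^{2/3}$ onto the inputs: in $\mathcal R_{\rm res}$ by \eqref{eq:trick1} onto $|k-l|$, and then in the inner splitting onto whichever of $k-l-\eta$ or $\eta$ is larger. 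The result is an $L^2L^2$ pairing $\|\nabla|D_x|^{\frac13}d\|$ with $\|\nabla|D_x|^{\frac13}d\|$, with one of them in $L^\infty_y$ via \eqref{eq:GN2}. The extra $|k|^{1/3}$ is absorbed by $|k|^{1/3}\lesssim|k-l|^{1/3}$. Carried out this way, the count is $A^{-1}\cdot A^{1/2}\cdot A^{1/2}\cdot A^{1/3}=A^{-1/3}$ once the $d_k$ factor is taken through the $|D_x|^{1/3}$ dissipation. I will organize the bookkeeping so that exactly this pairing occurs.

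In $\mathcal R_{\rm HL}$ the weight $\langle k\rangle^m\langle\frac1k\rangle^\epsilon$ goes to $l$ by \eqref{eq:trick2}, and $d_l$ takes the $Y_{m,\epsilon}$ $L^2$ norm. In $\mathcal R_{\rm LH}$ we use \eqref{eq:trick3} and \eqref{eq:trick3.5}, splitting $\langle k\rangle^{2m}$ and $\langle\frac1k\rangle^{2\epsilon}$ between $l$ and $k-l$. The factor $1+\langle\frac1k\rangle^{s-2s_3}$ is absorbed by Young's inequality in $k$, which is integrable since $2\epsilon<1$. The same inner three-region argument is then repeated for the product $\q d_{k-l-\eta}\cdot\q d_\eta$; this is Lemma \ref{lem:Preliminary estimates}.

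The main obstacle is the low-frequency weight $\langle\frac1k\rangle^{\epsilon}$ in $\mathcal R_{\rm LH}$ and in the inner low–high region. There the output frequency is small and the weight cannot be transferred directly. I would first try \eqref{eq:trick3.5} with $s_3=\epsilon$ and $s=s_1+s_2=\frac23$. This trades the singular weight for the positive powers $|k-l|^{s_1}|l|^{s_2}$ supplied by the $|D_x|^{1/3}$ factors, and it again hinges on $\frac16<\epsilon<\frac12$.

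Summing over all regions and over $i=1,2$ yields the stated bound.
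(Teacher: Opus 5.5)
\textbf{Verdict: there is a genuine gap.} Your skeleton is the paper's: Fourier transform in $x$, the outer splitting of $(k,l)$ into $\mathcal R_{\rm res},\mathcal R_{\rm HL},\mathcal R_{\rm LH}$, the inner splitting of $(k-l,\eta)$ packaged as Lemma \ref{lem:Preliminary estimates} with $i=0$, $f=g=\nabla d$, and Corollary \ref{cor:L infty} with $\alpha=\beta=\frac13$ for the $L^\infty_y$ factors. But the step that carries the content of the lemma, extracting $A^{-1/3}$, is wrong.

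\textbf{The power counting.} The enhanced-dissipation part of $\||D_x|^{1/3}d\|_{X_{a,m,\epsilon}}^2$ is $A^{-1/3}\||D_x|^{2/3}d\|_{L^2Y_{m,\epsilon}}^2$ (exponential weights suppressed). So putting the output factor $|k|^{2/3}\|d_k\|_{L^2_y}$ in $L^2_t$ costs $A^{1/6}$, not $A^{1/3}$. The full $|k|^{2/3}$ never has to leave $d_k$: in $\mathcal R_{\rm res}$ only $\langle k\rangle^m\langle\frac1k\rangle^\epsilon$ is moved to $k-l$ by \eqref{eq:trick1}. Having miscounted, you switch to an allocation that cannot work:
\begin{itemize}
\item Two full $L^2_t$ dissipation factors give $A^{-1}\cdot A^{1/2}\cdot A^{1/2}=A^0$, i.e.\ no decay at all.
\item The product you display, $A^{-1}A^{1/2}A^{1/2}A^{1/3}$, equals $A^{1/3}$, not $A^{-1/3}$.
\item Sending $d_k$ through the enhanced dissipation as well would put three $L^2_t$ functions in one time integral, which H\"older's inequality does not permit.
\end{itemize}
Moving $|k|^{2/3}$ onto the inputs throws away the cheapest source of time integrability.

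\textbf{The first allocation also fails to give the stated right-hand side.} Correct its count and keep $|k|^{2/3}d_k$ and one gradient factor each in full $L^2_t$. Then the remaining two factors must be taken in $L^\infty_t$. The $L^\infty_y$ bound for $\nabla d_\eta$ then needs $\|\py\nabla|D_x|^{1/3}d\|_{Y_{m,\epsilon}}$ in $L^\infty_t$, which $\|\nabla|D_x|^{1/3}d\|_{X_{a,m,\epsilon}}$ does not control. Your ``$\|\nabla|D_x|^{1/3}d\|^{1/2}$ again'' tacitly takes it in $L^2_t$.

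\textbf{The missing idea.} Spend the $A^{1/2}$ as two quarter-powers coming from the Gagliardo--Nirenberg factors, with time exponents $\frac12+\frac14+\frac14$:
\begin{itemize}
\item $|k|^{2/3}\|d_k\|_{L^2_y}$ in $L^2_t$, costing $A^{1/6}$;
\item $d_l$ in $L^\infty_y$ via Corollary \ref{cor:L infty}, giving $\||D_x|^{1/3}d\|_{L^\infty_t}^{1/2}\|\py|D_x|^{1/3}d\|_{L^2_t}^{1/2}$ and costing $A^{1/4}$;
\item $\nabla d_\eta$ in $L^\infty_y$ inside Lemma \ref{lem:Preliminary estimates}, giving $\|\nabla|D_x|^{1/3}d\|_{L^\infty_t}^{1/2}\|\py\nabla|D_x|^{1/3}d\|_{L^2_t}^{1/2}$ and costing $A^{1/4}$;
\item $\nabla d_{k-l-\eta}$ in $L^\infty_tL^2_y$.
\end{itemize}
This yields $A^{-1+\frac16+\frac14+\frac14}=A^{-1/3}$ with exactly $\||D_x|^{1/3}d\|_X^2\|\nabla|D_x|^{1/3}d\|_X^2$. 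The regions $\mathcal R_{\rm HL}$ and $\mathcal R_{\rm LH}$ are treated the same way after \eqref{eq:trick2}, \eqref{eq:trick3} and \eqref{eq:trick3.5}.

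\textbf{A minor correction.} With $\alpha=\beta=\frac13$ the condition of Corollary \ref{cor:L infty} reads $-m<-\frac16<\epsilon$, which is automatic. The restriction $\epsilon>\frac16$ is needed instead in Lemma \ref{lem:Preliminary estimates} with $i=0$, through $\||\eta|^{-2/3}\langle\eta\rangle^{-m}\langle\frac1\eta\rangle^{-\epsilon}\|_{L^2_\eta}<\infty$.
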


	\begin{Lem}
		\label{lem:est of dx13 nabla d2}
		For $0<a<\frac{1}{16(1+2 \pi)}$ and $0<\epsilon< \frac12 <m$, 
		\begin{align*}
			& \quad  \frac1A \int_{0}^{t} \left|\Re\left( |D_x|^\frac13  \left( |\nabla d|^2 e_1\right) \left\lvert\, \mathcal{M} e^{2 a A^{-\frac{1}{3}}\left|D_x\right|^{\frac{2}{3} }t }\langle D_x\rangle^{2 m}\langle\frac{1}{D_x}\rangle^{2 \epsilon} |D_x|^\frac13  d\right.\right)\right| dt\\
			&\lesssim \frac{1}{A^{\frac{1}{6}}} \lt\|   |D_x|^\frac13 d^1 \rt\|_{X_{a,m,\epsilon}} \lt\|    |D_x|^\frac13 d \rt\|_{X_{a,m,\epsilon}} \lt \| \nabla |D_x|^\frac13 d \rt\|_{X_{a,m,\epsilon}}   .
		\end{align*}
	\end{Lem}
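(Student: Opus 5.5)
The plan is to work entirely on the Fourier side in $x$ and to exploit the structure of the forcing. Since $|\nabla d|^2 e_1$ points in the $e_1$ direction, only the first component of the test function $\mathcal{M} e^{2aA^{-1/3}|D_x|^{2/3}t}\langle D_x\rangle^{2m}\langle \frac1{D_x}\rangle^{2\epsilon}|D_x|^{1/3} d$ survives the inner product. This is why $d^1$ appears in exactly one factor of the claimed bound. Writing $|\nabla d|^2=\sum_{i}\p d\cdot \p d$ and taking the horizontal Fourier transform, the quantity to bound is at most
\[
\frac1A\int_0^t\int_{\mathbb{R}^2} e^{2aA^{-\frac13}|k|^{\frac23}t}\langle k\rangle^{2m}\langle\tfrac1k\rangle^{2\epsilon}|k|^{\frac23}\Big|\int_{\mathbb{R}}\mathcal{M}(k,D_y)d^1_k(y)\,\q d_l(y)\cdot \q d_{k-l}(y)\,dy\Big|\,dk\,dl\,dt .
\]
By the boundedness \eqref{eq:bound of M} of $\mathcal{M}$ on $L^2_y$, and since $\mathcal{M}(k,D_y)$ is self-adjoint, the operator $\mathcal{M}$ can be moved onto $d^1_k$ at no cost. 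The exponential weight is split using $|k|^{2/3}\le |l|^{2/3}+|k-l|^{2/3}$, so that $e^{aA^{-1/3}|k|^{2/3}t}\le e^{aA^{-1/3}|l|^{2/3}t}e^{aA^{-1/3}|k-l|^{2/3}t}$. One copy of the exponential then stays on $d^1_k$ and one copy is distributed over the two gradient factors.

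Next I would decompose the $l$-integration into the three regions $\mathcal{R}_{\rm res}$, $\mathcal{R}_{\rm HL}$ and $\mathcal{R}_{\rm LH}$, and handle them with Lemma \ref{lem:freq_estimates}. In $\mathcal{R}_{\rm res}$ and $\mathcal{R}_{\rm HL}$, one copy of $\langle k\rangle^m\langle\frac1k\rangle^\epsilon|k|^{1/3}$ is moved, by \eqref{eq:trick1}--\eqref{eq:trick2}, onto the gradient factor of comparable (largest) frequency. The other copy, together with the remaining $|k|^{1/3}$, stays on $d^1_k$. The remaining gradient factor, at the low frequency, is placed in $L^1_lL^\infty_y$.

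In $\mathcal{R}_{\rm LH}$, the output frequency $k$ is the small one. There I would use \eqref{eq:trick3}--\eqref{eq:trick3.5} to split $\langle k\rangle^{2m}$ and $|k|^{2/3}$ between the two high frequencies $l$ and $k-l$, keeping the low-frequency singular factor $\langle \frac1k\rangle^{2\epsilon}$ on $d^1_k$. Because $\epsilon<\frac12$, this factor is integrable, and after Cauchy--Schwarz in $k$ it is absorbed through the $L^1_k$-type bound of Corollary \ref{cor:L infty}.

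For the powers of $A$, I would place the factors in time as follows. The factor $|k|^{1/3}\cdot|k|^{1/3}d^1_k$ goes in $L^2_tL^2$ using the enhanced-dissipation part of the $X$-norm, namely $A^{-1/3}\||D_x|^{1/3}(|D_x|^{1/3}d^1)\|^2_{L^2L^2}$; this costs $A^{1/6}\||D_x|^{1/3}d^1\|_{X}$. The high-frequency gradient factor goes in $L^2_tL^2$ via the term $A^{-1}\|\nabla(\cdot)\|^2_{L^2L^2}$, which costs $A^{1/2}$ times a norm of $|D_x|^{1/3}d$ with one derivative. The low-frequency factor goes in $L^\infty_t$, measured by $\||D_x|^{1/3}d\|_X$ or $\|\nabla|D_x|^{1/3}d\|_X$. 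The total power is then $A^{-1}\cdot A^{1/6}\cdot A^{1/2}=A^{-1/3}$, which is even better than the claimed $A^{-1/6}$. This suggests the slack will be needed elsewhere, probably when one $|D_x|^{1/3}$ must be traded for integrability in $l$.

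The main obstacle is the $L^1_lL^\infty_y$ bound for the low-frequency gradient factor $\q d_l$ without using second derivatives of $d$, which are not allowed on the right-hand side. A direct application of Corollary \ref{cor:L infty} to $f=\partial_i d$ would bring in $\py\partial_i d$. My first attempt is to move the $L^\infty_y$ norm onto $d^1_k$ instead, via \eqref{eq:GN1}. This gives $\|d^1_k\|_{L^\infty_y}\le\|d^1_k\|^{1/2}_{L^2_y}\|\py d^1_k\|^{1/2}_{L^2_y}$, which involves only first derivatives of $d^1$. Both gradient factors then stay in $L^2_y$, and the $l$-convolution is closed by Young's inequality in $L^2_l\times L^2_{k-l}$ after Cauchy--Schwarz in $k$.

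The price is that half of the $d^1$ factor now carries $\py$ and must be measured in $L^2_t$ with the $A^{-1/2}$-weighted dissipation, rather than with the $A^{-1/6}$-weighted enhanced dissipation. Balancing these two halves against the $A^{1/6}$ gain from the other half is exactly what I expect to produce the weaker exponent $A^{-1/6}$ in the statement. The conditions $\frac13$ versus $\epsilon<\frac12<m$ enter only through the integrability $\||l|^{-\frac{\alpha+\beta}{2}}\langle l\rangle^{-m}\langle\frac1l\rangle^{-\epsilon}\|_{L^2_l}<\infty$ in Corollary \ref{cor:L infty}, with $\alpha=\beta=\frac13$.
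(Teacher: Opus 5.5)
\paragraph{The premise about second derivatives is false.} The obstacle you single out rests on a false premise: second derivatives of $d$ are allowed on the right-hand side. By \eqref{eq:X norm}, $\|\nabla|D_x|^{\frac13}d\|_{X_{a,m,\epsilon}}^2$ contains $\frac1A\|e^{aA^{-\frac13}|D_x|^{\frac23}t}\nabla\nabla|D_x|^{\frac13}d\|_{L^2_tY_{m,\epsilon}}^2$. So $\partial_y\nabla|D_x|^{\frac13}d$ is available in $L^2_t$ at the price $A^{\frac12}$.

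The paper uses exactly this:
\begin{itemize}
\item $d^1_k$ stays in $L^2_y$ with weight $|k|^{\frac13}$ and is taken in $L^\infty_t$.
\item In $\mathcal{R}_{\text{res}}$, the low-frequency gradient goes into $L^1_lL^\infty_y$ via Corollary~\ref{cor:L infty} with $\alpha=\frac23$, $\beta=\frac13$. This produces $\||D_x|^{\frac13}\nabla|D_x|^{\frac13}d\|_{Y_{m,\epsilon}}^{\frac12}\|\partial_y\nabla|D_x|^{\frac13}d\|_{Y_{m,\epsilon}}^{\frac12}$, with both factors in $L^2_t$.
\item The remaining gradient, weighted by $|k-l|^{\frac13}$, is in $L^2_t$ through the dissipation of $|D_x|^{\frac13}d$.
\end{itemize}
The count is $A^{-1}A^{\frac1{12}}A^{\frac14}A^{\frac12}=A^{-\frac16}$.

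\paragraph{The genuine gap is your $A$-bookkeeping.} The weight is exactly two copies of $\langle k\rangle^m\langle\frac1k\rangle^\epsilon|k|^{\frac13}$; there is no ``remaining $|k|^{\frac13}$''. Take $|k|\ll1$ in $\mathcal{R}_{\text{res}}$, where $|k|\sim|k-l|$. The powers of $|k|$ handed to $d^1_k$ and to the comparable-frequency gradient must add up to at most $\frac23$. At the same time, each must be at least the power in the norm you measure it with.
\begin{itemize}
\item Your first count ($|D_x|^{\frac23}d^1$ together with $\nabla|D_x|^{\frac13}d$) spends $|k|^1$. This is the source of the spurious $A^{-\frac13}$.
\item Your final count ($\||D_x|^{\frac23}d^1\|^{\frac12}$ via enhanced dissipation, $\|\partial_y|D_x|^{\frac13}d^1\|^{\frac12}$, and $\nabla|D_x|^{\frac13}d$) still spends $\frac13+\frac16+\frac13=\frac56>\frac23$.
\end{itemize}
Once the budget is respected, the $d^1$ factor is forced to be $\||D_x|^{\frac13}d^1\|_{Y_{m,\epsilon}}^{\frac12}\|\partial_y|D_x|^{\frac13}d^1\|_{Y_{m,\epsilon}}^{\frac12}$. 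Its first half is controlled only in $L^\infty_t$.

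Suppose the two gradient factors are then measured by $\|\nabla|D_x|^{\frac13}d\|_{Y_{m,\epsilon}}$, as you propose. In the $\partial_yd\cdot\partial_yd$ term, everything you can place in $L^2_t$ costs $A^{\frac12}$ per unit. H\"older in time forces total power $2$ in $L^2_t$, so you end with $A^{-1}\cdot(A^{\frac12})^2=A^0$. Hence ``balancing the two halves'' does not give $A^{-\frac16}$.

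There is also a slip in the Young step: $L^2\ast L^2$ only lands in $L^\infty$. In $\mathcal{R}_{\text{res}}$ and $\mathcal{R}_{\text{HL}}$ the low-frequency factor must be taken in $L^1$.

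\paragraph{How to complete your route.} Your $L^\infty_y$-on-$d^1$ idea can be saved, and it then becomes a genuine alternative that never uses second derivatives. The missing step is to take the enhanced-dissipation gain from the low-frequency gradient rather than from $d^1$:
\begin{itemize}
\item Convert the $L^1$ norm of the low-frequency gradient with the weight $|l|^{\frac12}$. Note that $\| |l|^{-\frac12}\langle l\rangle^{-m}\langle\frac1l\rangle^{-\epsilon}\|_{L^2_l}<\infty$ for $\epsilon>0$.
\item Interpolate $\||D_x|^{\frac12}\nabla d\|_{Y_{m,\epsilon}}\le\||D_x|^{\frac13}\nabla d\|_{Y_{m,\epsilon}}^{\frac12}\||D_x|^{\frac23}\nabla d\|_{Y_{m,\epsilon}}^{\frac12}$. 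Put both halves in $L^2_t$, at costs $A^{\frac14}$ and $A^{\frac1{12}}$; the second comes from the enhanced dissipation in $\|\nabla|D_x|^{\frac13}d\|_{X_{a,m,\epsilon}}$.
\item Put $\partial_y|D_x|^{\frac13}d^1$ in $L^2_t$, at cost $A^{\frac14}$.
\item Split the high-frequency factor half in $L^\infty_t$ and half in $L^2_t$, at cost $A^{\frac14}$.
\end{itemize}
This gives $A^{-1+\frac14+\frac14+\frac1{12}+\frac14}=A^{-\frac16}$ times exactly $\||D_x|^{\frac13}d^1\|_{X_{a,m,\epsilon}}\||D_x|^{\frac13}d\|_{X_{a,m,\epsilon}}\|\nabla|D_x|^{\frac13}d\|_{X_{a,m,\epsilon}}$. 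In $\mathcal{R}_{\text{LH}}$ the same factors work, using \eqref{eq:trick3.5} with $s=s_2=\frac12$, $s_1=0$, $s_3=\epsilon$, together with the weight $\langle k\rangle^{-m}(\langle\frac1k\rangle^{\epsilon}+\langle\frac1k\rangle^{\frac12-\epsilon})\in L^2_k$.
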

	
	With Lemmas \ref{lem:est of dx13 nabla d2 d} and \ref{lem:est of dx13 nabla d2} in hand, we now prove Proposition \ref{lem:est of |D_x|13 d}.
    
	\begin{proof}[\bf Proof of Proposition \ref{lem:est of |D_x|13 d}.]
		It follows from \eqref{eq:main} that $|D_x|^\frac13 d$ satisfies
		\begin{equation}\begin{aligned} \label{eq:dx 13 d}
				\partial_t |D_x|^\frac13  d +  y \partial_x |D_x|^\frac13  d -  \frac{1}{A} \Delta |D_x|^\frac13  d=  -\frac1 A  |D_x|^\frac13 (u \cdot \nabla d)       +  \frac1A   |D_x|^\frac13  \lt(|\nabla d|^2 (d + e_1)\rt).
		\end{aligned}\end{equation}
	Multiplying \eqref{eq:dx 13 d} by $2 \mathcal{M} e^{2 a A^{-\frac{1}{3}}\left|D_x\right|^{\frac{2}{3}}  t } \langle D_x \rangle^{2 m} \langle\frac{1}{D_x} \rangle^{2 \epsilon} |D_x|^{\frac13}  d$, and using \eqref{eq:bound of M}--\eqref{eq:m}, for $0<a<\frac{1}{16(1+2 \pi)}$ we have
		\begin{align*}
			&\quad \frac{d}{d t}\lt\|\sqrt{\mathcal{M}} e^{a A^{-\frac{1}{3}}\left|D_x\right|^{\frac{2}{3} }t}   |D_x|^\frac13 d\rt\|_{Y_{m,\epsilon}}^2   +    \frac1A \lt\|e^{a A^{-\frac{1}{3}}\left|D_x\right|^{\frac{2}{3} }t}   \nabla|D_x|^\frac13 d\rt\|_{Y_{m,\epsilon}}^2 \\
			& \quad\quad +\frac{1}{16 A^{\frac{1}{3}}}\lt\|e^{a A^{-\frac{1}{3}}\left|D_x\right|^{\frac{2}{3} } t }   |D_x|^\frac23 d\rt\|_{Y_{m,\epsilon}}^2   + \lt \|e^{a A^{-\frac{1}{3}}\left|D_x\right|^{\frac{2}{3} }t }   \partial_x \nabla  \Delta^{-1}  |D_x|^\frac13 d  \rt\|_{Y_{m,\epsilon}}^2 \\
			&\leq  \frac2A  \left| \Re\left(|D_x|^{\frac13}    \lt( u \cdot \nabla d\rt) \left\lvert\, \mathcal{M} e^{2 a A^{-\frac{1}{3}}\left|D_x\right|^{\frac{2}{3} } t}\langle D_x\rangle^{2 m}\langle\frac{1}{D_x}\rangle^{2 \epsilon}  |D_x|^\frac13 d\right.\right)  \right|\\
			&\quad + \frac2A  \left| \Re\left( |D_x|^{\frac13}    \lt(|\nabla d|^2 (d + e_1)\rt) \left\lvert\, \mathcal{M} e^{2 a A^{-\frac{1}{3}}\left|D_x\right|^{\frac{2}{3} } t}\langle D_x\rangle^{2 m}\langle\frac{1}{D_x}\rangle^{2 \epsilon}   |D_x|^\frac13 d\right.\right)  \right|.
            \alabel{eq:dx 13 d'}
		\end{align*}
       Here we utilize a key estimate in Fourier space:
\begin{align*}
(fg \bigm||D_x|^\frac23 h) &= \bigl( |k|^\frac13 \widehat{fg} \bigm| |k|^\frac13 \hat{h} \bigr) \\
&\lesssim \int_{\mathbb{R}^2} |k|^\frac13 |k-l|^\frac13 |\hat{f}(k-l) \hat{g}(l) \hat{h}(k)|  dkdl \\
&\quad + \int_{\mathbb{R}^2} |k|^\frac13 |l|^\frac13 |\hat{f}(k-l) \hat{g}(l) \hat{h}(k)| dkdl.
\end{align*}
This inequality holds for suitable functions $f$, $g$ and $h$. Formally decomposing $|D_x|^{\frac13}(u\cdot\nabla d)$ into two corresponding terms, we can control it via $|D_x|^{\frac13} u\cdot \nabla d$ and $u\cdot\nabla|D_x|^{\frac13}d$ when performing the $L^2$ energy estimate for \eqref{eq:dx 13 d}.
Then, applying \eqref{eq:uf-estimate} in   Proposition \ref{prop:est of f} ($f=|D_x|^{\frac13}d$) to the term $u\cdot\nabla |D_x|^{\frac13}d$ in \eqref{eq:dx 13 d'} and using \eqref{eq:dx13 uf-estimate} in Lemma \ref{lem:est of dx u} ($f=d$) to the term $ |D_x|^{\frac13}u\cdot\nabla d$, 
        we have
		\begin{align*} 
			&\quad\lt \|      |D_x|^\frac13  d \rt \|_{X_{a,m,\epsilon}}^2 \\ 
			&\lesssim   \lt \|      |D_x|^\frac13  d_{\mathrm{in}}\rt \|_{Y_{m,\epsilon}}^2    + \frac1{A^\frac12}\|      \omega \|_{X_{a,m,\epsilon}}  \lt \|      |D_x|^\frac13 d\rt \|_{X_{a,m,\epsilon}}^2\\
			&\quad + \frac1A \int_{0}^{t} \left|\Re\left( |D_x|^\frac13  \left(  |\nabla d|^2 d \right) \left\lvert\, \mathcal{M} e^{2 a A^{-\frac{1}{3}}\left|D_x\right|^{\frac{2}{3} }t }\langle D_x\rangle^{2 m}\langle\frac{1}{D_x}\rangle^{2 \epsilon} |D_x|^\frac13  d\right.\right)\right| dt\\
			&\quad + \frac1A \int_{0}^{t} \left|\Re\left( |D_x|^\frac13  \left( |\nabla d|^2 e_1\right) \left\lvert\, \mathcal{M} e^{2 a A^{-\frac{1}{3}}\left|D_x\right|^{\frac{2}{3} }t }\langle D_x\rangle^{2 m}\langle\frac{1}{D_x}\rangle^{2 \epsilon} |D_x|^\frac13  d\right.\right)\right| dt.
		\end{align*}
		Consequently, recalling Lemmas \ref{lem:est of dx13 nabla d2 d} and \ref{lem:est of dx13 nabla d2},   the proof is complete.
	\end{proof}
	The remain argument is split into two subsections. In the first one, we establish a general lemma, which provides the basic tools for estimating nonlinear terms. In the second one, we use these results to prove the nonlinear estimates showed in Lemmas \ref{lem:est of dx13 nabla d2 d} and \ref{lem:est of dx13 nabla d2}.
	
	\subsection{Estimates for the elastic nonlinearity}
    We first show a general estimate that will be repeatedly used in the nonlinear analysis. Recalling \eqref{eq:def of fk}, we have:
	\begin{Lem}\label{lem:Preliminary estimates}
	Let $i\in\{0,1\}$, $0<a<\frac{1}{16(1+2 \pi)}$ and $\frac{1-i}6<\epsilon< \frac12 <m$. For certain smooth functions $f = f(x,y)$ and $g = g(x,y)$, we have
	\begin{align*}
		I&:=\lt\|e^{a A^{-\frac{1}{3}}|k-l|^{\frac{2}{3} } t }\langle k-l\rangle^{m   } \langle\frac{1}{k-l}\rangle^\epsilon  |k-l|^\frac i3  \int_{\mathbb{R}}   \|  f_{k-l-\eta}(y) \|_{L_y^2} \| g_{\eta}(y)  \|_{L_y^\infty} d\eta \rt\|_{L_{k-l}^2} \\
		&\lesssim  \lt \|  e^{a A^{-\frac{1}{3}}|D_x|^{\frac{2}{3} } t }    |D_x|^\frac13 f   \rt \|_{Y_{m,\epsilon}}\lt  \|  e^{a A^{-\frac{1}{3}}|D_x|^{\frac{2}{3} } t }   |D_x|^\frac{i+1}3 g  \rt \|_{Y_{m,\epsilon}}^{\frac12}\lt \|  e^{a A^{-\frac{1}{3}}|D_x|^{\frac{2}{3} } t }     \py |D_x|^\frac13 g   \rt \|_{Y_{m,\epsilon}}^\frac12. \alabel{eq:est of |nabla d|^2}
	\end{align*}
\end{Lem}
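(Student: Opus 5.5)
The plan is to treat $I$ as an $L^2_{p}$ norm, with $p:=k-l$, of a convolution in $\eta$ of two nonnegative functions: $F(\zeta)=\|f_\zeta\|_{L^2_y}$ with $\zeta=p-\eta$, and $G(\eta)=\|g_\eta\|_{L^\infty_y}$. We distribute the weight
\[
w_i(p)=e^{aA^{-1/3}|p|^{2/3}t}\langle p\rangle^m\langle\tfrac1p\rangle^\epsilon|p|^{i/3}
\]
between the two factors and then close with Young's inequality. The exponential factor always splits, since $|p|^{2/3}\le|p-\eta|^{2/3}+|\eta|^{2/3}$. For the rest we use the two-layer idea \eqref{eq:twolayer} in the inner variable: we decompose the $\eta$-integral into the three regions $\frac12|p-\eta|\le|p|\le2|p-\eta|$, $|p|>2|p-\eta|$ and $2|p|<|p-\eta|$. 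In each region we apply Lemma \ref{lem:freq_estimates} with $(k,l)$ replaced by $(p,\eta)$.

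\emph{Resonant and low--high pieces.} Here $|\eta|\lesssim|p-\eta|$ and $|p|\lesssim|p-\eta|$, so by \eqref{eq:trick1} and \eqref{eq:trick3} the polynomial weights move onto $f$:
\[
\langle p\rangle^m\langle\tfrac1p\rangle^\epsilon\lesssim\langle p-\eta\rangle^m\langle\tfrac1{p-\eta}\rangle^\epsilon .
\]
In the low--high case the $\langle\frac1p\rangle^\epsilon$ factor is handled via \eqref{eq:trick3.5}, and there the large $|p-\eta|$ makes it harmless. To produce the required $|p-\eta|^{1/3}$ on $f$, write $|p|^{i/3}\lesssim|p-\eta|^{1/3}|p-\eta|^{(i-1)/3}$. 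For $i=0$ the leftover factor is $|p-\eta|^{-1/3}\lesssim|\eta|^{-1/3}$, which is sent to $g$. Young's inequality $\|F*G\|_{L^2}\le\|F\|_{L^2}\|G\|_{L^1}$ then gives the factor $\|e^{\cdots}|D_x|^{1/3}f\|_{Y_{m,\epsilon}}$ times
\[
\big\|e^{aA^{-1/3}|\eta|^{2/3}t}|\eta|^{(i-1)/3}\|g_\eta\|_{L^\infty_y}\big\|_{L^1_\eta}.
\]
This last quantity is bounded exactly as in Corollary \ref{cor:L infty}, i.e. by \eqref{eq:GN1} and Cauchy--Schwarz, with $\alpha=\frac{i+1}3$ and $\beta=\frac13$. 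The resulting dual weight is $|\eta|^{-\frac13-\frac i6}\langle\eta\rangle^{-m}\langle\frac1\eta\rangle^{-\epsilon}$, and it lies in $L^2$ precisely when $-\frac23-\frac i3+2\epsilon>-1$ at low frequency, i.e. $\epsilon>\frac{1-i}6$, while at high frequency $m>\frac12$ suffices. This is the source of the hypothesis on $\epsilon$.

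\emph{High--low piece.} Here $|\eta|\sim|p|$ and $|p-\eta|<\frac12|p|$. By \eqref{eq:trick2} we have $\langle p\rangle^m\lesssim\langle\eta\rangle^m$ and $|p|^{i/3}\sim|\eta|^{i/3}$. The low-frequency weight is split using both $\langle\frac1p\rangle\le\langle\frac1\eta\rangle$ (up to constants) and $\langle\frac1p\rangle\le\langle\frac1{p-\eta}\rangle$. We also trade a factor $(|p-\eta|/|\eta|)^{s}\le1$ for a suitable $s\in[0,\frac13]$, so that $f$ carries $|p-\eta|^{1/3}$ times an integrable leftover and $g$ carries $|\eta|^{(i+1)/3}$ and $|\eta|^{1/3}\partial_y$. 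We then use $\|F*G\|_{L^2}\le\|F\|_{L^1}\|G\|_{L^2}$. The $L^1$ norm of $F$ is controlled by Cauchy--Schwarz against $|\zeta|^{s-1/3}\langle\zeta\rangle^{-m}\langle\frac1\zeta\rangle^{-\epsilon'}$. The $L^2$ norm of $G$ is controlled pointwise by \eqref{eq:GN1}, $\|g_\eta\|_{L^\infty}\le\|g_\eta\|_{L^2}^{1/2}\|\partial_yg_\eta\|_{L^2}^{1/2}$, followed by Hölder in $\eta$. This produces the same product $\|\cdot|D_x|^{\frac{i+1}3}g\|^{1/2}\|\cdot\partial_y|D_x|^{\frac13}g\|^{1/2}$.

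I expect the main obstacle to be this high--low bookkeeping at low frequencies, in particular for $i=1$ when only $\epsilon>0$ is assumed. The powers of $|\eta|$ must match exactly the $\frac{i+1}3$ and $\frac13$ prescribed on the $g$-side. Simultaneously, the integrability of $|\zeta|^{2s-2/3}\langle\frac1\zeta\rangle^{-2\epsilon'}$ near $\zeta=0$ forces a careful choice of $s$ and of how much of $\langle\frac1p\rangle^\epsilon$ is placed on $f$ versus $g$. If a single choice fails, I would split this region further according to whether $|p|\ge1$ or $|p|<1$, using the exponential weights and $\langle p\rangle^m$ to absorb the loss in the high-frequency part.
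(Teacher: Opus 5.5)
Your resonant and high--low pieces match the paper's $I_1$ and $I_2$, but the low--high piece has a genuine gap.

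\textbf{The low--high piece.} In the region $2|p|<|p-\eta|$ you claim $\langle p\rangle^m\langle\frac1p\rangle^\epsilon\lesssim\langle p-\eta\rangle^m\langle\frac1{p-\eta}\rangle^\epsilon$. This is false. Take $\eta=-1$ and let $p\to0$: the left side grows like $|p|^{-\epsilon}$, while the right side stays bounded.

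No pointwise redistribution of the weight can work there. The output weight $\langle\frac1p\rangle^\epsilon|p|^{i/3}$ is unbounded as $p\to0$ (for $i=0$ it behaves like $|p|^{-\epsilon}$), while $\eta$ and $p-\eta$ stay away from $0$. So Young's inequality $\|F*G\|_{L^2}\le\|F\|_{L^2}\|G\|_{L^1}$ cannot close this piece. Saying that large $|p-\eta|$ makes it harmless does not help, because the singularity sits at small $p$. Citing \eqref{eq:trick3.5} does not fix it inside your framework either: that inequality leaves a $p$-dependent factor $1+\langle\frac1p\rangle^{s-2s_3}$, which still has to be integrated in $p$.

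\textbf{What the paper does in $I_3$.} It changes the Lebesgue exponents.
\begin{itemize}
\item By \eqref{eq:trick3}, in the form $\langle p\rangle^m\lesssim\langle p\rangle^{-m}\langle\eta\rangle^m\langle p-\eta\rangle^m$, and by \eqref{eq:trick3.5} with $s=\frac{4-i}6$, $s_1=\frac{1-i}3$, $s_2=\frac{i+2}6$, $s_3=\epsilon$, it puts the full weights $\langle\cdot\rangle^m\langle\frac1\cdot\rangle^\epsilon$ on both factors. The $f$-factor carries $|p-\eta|^{1/3}$ and the $g$-factor carries $|\eta|^{\frac{i+2}6}$.
\item This leaves the output factor $\langle p\rangle^{-m}\big(\langle\frac1p\rangle^\epsilon+\langle\frac1p\rangle^{\frac{4-i}6-\epsilon}\big)$.
\item For fixed $p$, Cauchy--Schwarz in $\eta$ bounds the weighted convolution in $L^\infty_p$ by the product of two weighted $L^2$ norms. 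The $g$-norm is in $L^2_\eta$, not $L^1_\eta$, and is handled via \eqref{eq:GN1}.
\item The output factor is then put in $L^2_p$. This needs $m>\frac12$, $\epsilon<\frac12$ and $\frac{4-i}6-\epsilon<\frac12$; the last condition is again $\epsilon>\frac{1-i}6$.
\end{itemize}
This $L^2\times L^2\to L^\infty_p$ step, with the singular low-frequency weight placed in $L^2_p$, is the idea your proposal is missing.

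\textbf{The high--low piece.} The piece you flag as the main obstacle is routine. Use $\langle\frac1p\rangle^\epsilon\lesssim\langle\frac1\eta\rangle^\epsilon$ on $g$ and $|p|^{i/3}\lesssim|p-\eta|^{\frac{i-2}6}|\eta|^{\frac{i+2}6}$. Note the direction: you multiply by $(|\eta|/|p-\eta|)^{\frac{2-i}6}\ge1$, not by a factor $\le1$. The $f$-side dual weight is then $|\zeta|^{-\frac{4-i}6}\langle\zeta\rangle^{-m}\langle\frac1\zeta\rangle^{-\epsilon}$, not $|\zeta|^{s-\frac13}\cdots$. It lies in $L^2$ exactly when $\epsilon>\frac{1-i}6$, including $i=1$ with only $\epsilon>0$. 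No splitting into $|p|\ge1$ and $|p|<1$ is needed.

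\textbf{A slip in the resonant piece.} The dual weight there is $|\eta|^{-\frac{4-i}6}\langle\eta\rangle^{-m}\langle\frac1\eta\rangle^{-\epsilon}$, not $|\eta|^{-\frac13-\frac i6}\cdots$. Your stated condition $\epsilon>\frac{1-i}6$ matches the correct exponent, not the one you wrote.
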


  This result will be frequently used in the next subsection and next section to control the nonlinear terms appearing in the energy estimate. And to complete this subsection, we present the proof of Lemma \ref{lem:Preliminary estimates}.

\begin{proof}[\bf Proof of Lemma \ref{lem:Preliminary estimates}] According to Lemma \ref{lem:freq_estimates}, we estimate the term $I$ in frequency regions.
	\begin{align*}
		I&\leq \lt\|e^{a A^{-\frac{1}{3}}|k-l|^{\frac{2}{3} } t }\langle k-l\rangle^{m} \langle\frac{1}{k-l}\rangle^\epsilon  |k-l|^\frac i3      \|  f_{k-l-\eta}(y)\|_{L_y^2} \| g_{\eta}(y)  \|_{L_y^\infty}   \rt\|_{L_{k-l}^2 L_\eta^1(\mathcal{R}_{\text{res}}(k-l, \eta))} \\
		&\quad +  \lt\|e^{a A^{-\frac{1}{3}}|k-l|^{\frac{2}{3} } t }\langle k-l\rangle^{m   } \langle\frac{1}{k-l}\rangle^\epsilon   |k-l|^\frac i3    \|  f_{k-l-\eta}(y)\|_{L_y^2} \| g_{\eta}(y)  \|_{L_y^\infty}   \rt\|_{L_{k-l}^2 L_\eta^1(  \mathcal{R}_{\text{HL}}(k-l, \eta) )}\\
		&\quad + \lt\|e^{a A^{-\frac{1}{3}}|k-l|^{\frac{2}{3} } t }\langle k-l\rangle^{m   } \langle\frac{1}{k-l}\rangle^\epsilon   |k-l|^\frac i3   \|  f_{k-l-\eta}(y)\|_{L_y^2} \| g_{\eta}(y)  \|_{L_y^\infty}  \rt\|_{L_{k-l}^2 L_\eta^1(  \mathcal{R}_{\text{LH}}(k-l, \eta) )}\\
		&=: I_{1} + I_{2} +I_{3}.  
	\end{align*}

    \underline{\bf Estimate of $I_1$.} In the domain of  $\mathcal{R}_{\text{res}}(k-l, \eta)$, using \eqref{eq:trick1} we have
    $$ |\eta| \lesssim |k-l-\eta| \sim |k-l|.$$
    Then by $|k_1+k_2|^\frac23\leq |k_1|^\frac23+|k_2|^\frac23$ and Young inequality for convolutions we estimate $I_1$ as follows:
	\begin{align*}
		I_{1} &\lesssim \lt\|e^{a A^{-\frac{1}{3}}|k-l|^{\frac{2}{3} } t }\langle k-l-\eta\rangle^{m }\langle\frac{1}{k-l-\eta}\rangle^\epsilon  |\eta|^{-\frac{1-i}3} |k-l-\eta|^{\frac13}    \|  f_{k-l-\eta}(y)\|_{L_y^2} \| g_{\eta}(y)  \|_{L_y^\infty}  \rt\|_{L_{k-l}^2 L_\eta^1}\\
		&\lesssim \lt\|  e^{a A^{-\frac{1}{3}}|k-l-\eta|^{\frac{2}{3} } t } \langle k-l-\eta\rangle^{m  }\langle\frac{1}{k-l-\eta}\rangle^\epsilon   |k-l-\eta|^{\frac13}  \|  f_{k-l-\eta}(y)\|_{L_y^2}    \rt\|_{L_{k-l-\eta}^2 }    \\
		&\quad\times  \lt \|  e^{a A^{-\frac{1}{3}}|\eta|^{\frac{2}{3} } t }   |\eta|^{-\frac{1-i}3}  \| g_{\eta}(y)  \|_{L_y^\infty}     \rt\|_{L_{\eta}^1 },
    \end{align*}
    which can be controlled due to  H$\text{\"{o}}$lder inequality and \eqref{eq:GN1} by 
    \begin{align*}
		&C\lt\|  e^{a A^{-\frac{1}{3}}|D_x|^{\frac{2}{3} } t } \langle D_x\rangle^{m }\langle\frac{1}{D_x}\rangle^\epsilon   |D_x|^\frac13   f   \rt\|_{L^2 }   \lt\||\eta|^{-\frac{4-i}6}\langle \eta\rangle^{-m}\langle\frac{1}{\eta}\rangle^{-\epsilon}\rt\|_{L_{\eta}^2 }\\
		&\quad\times \lt\|  e^{a A^{-\frac{1}{3}}|\eta|^{\frac{2}{3} } t }    \langle \eta\rangle^m\langle\frac{1}{\eta}\rangle^\epsilon \lt \||\eta|^{\frac{i+1}3} g_\eta (y)\rt \|_{L_y^2}^\frac12 \lt \| |\eta|^{ \frac13} \py    g_{\eta}(y)\rt \|_{L_y^2}^\frac12   \rt\|_{L_{\eta}^2 }\\
		&\lesssim  \lt\|  e^{a A^{-\frac{1}{3}}|D_x|^{\frac{2}{3} } t }   |D_x|^\frac13 f   \rt\|_{Y_{m,\epsilon}} \lt \|  e^{a A^{-\frac{1}{3}}|D_x|^{\frac{2}{3} } t }   |D_x|^{\frac{i+1}3} g  \rt  \|_{Y_{m,\epsilon}}^{\frac12} \lt \|  e^{a A^{-\frac{1}{3}}|D_x|^{\frac{2}{3} } t } \nabla  |D_x|^\frac13 g   \rt \|_{Y_{m,\epsilon}}^\frac12,
	\end{align*}
    where we used
$$\lt \||\eta|^{-\frac{4-i}6}\langle \eta\rangle^{-m}\langle\frac{1}{\eta}\rangle^{-\epsilon}\rt \|_{L_{\eta}^2 }\lesssim 1$$
for $\epsilon>\frac16$ when $i=0$ and $\epsilon>0$ when $i=1.$

    \underline{\bf Estimate of $I_2$.}
	In $\mathcal{R}_{\text{HL}}(k-l, \eta)$, by \eqref{eq:trick2} we get
    $$|k-l-\eta| \lesssim |k-l| \sim |\eta|$$
    and using Young inequality for convolutions again, we have
	\begin{align*}
		I_{2} &\lesssim   \lt\|e^{a A^{-\frac{1}{3}}|k-l|^{\frac{2}{3} } t }  \langle \eta\rangle^{m  }\langle\frac{1}{\eta}\rangle^\epsilon |k-l-\eta|^{\frac{i-2}6}  |\eta|^{  \frac{i+2}6}   \| f_{k-l-\eta}(y) \|_{L_y^2} \|  g_{\eta}(y) \|_{L_y^\infty}  \rt\|_{L_{k-l}^2 L_\eta^1} \\
		&\lesssim \lt\|  e^{a A^{-\frac{1}{3}}|k-l-\eta|^{\frac{2}{3} } t }  |k-l-\eta|^{\frac{i-2}6} \| f_{k-l-\eta}(y) \|_{L_y^2}     \rt\|_{L_{k-l-\eta}^1 }\lt\|  e^{a A^{-\frac{1}{3}}|\eta|^{\frac{2}{3} } t } \langle  \eta\rangle^{m  }\langle\frac{1}{ \eta}\rangle^\epsilon  |\eta|^{  \frac{i+2}6} \| g_{\eta}(y)  \|_{L_y^\infty}   \rt\|_{L_{\eta}^2 }
\end{align*}
which, along with \eqref{eq:GN1}, implies
        \begin{align*}
		I_2&\lesssim \lt \|  e^{a A^{-\frac{1}{3}}|D_x|^{\frac{2}{3} } t } \langle D_x\rangle^{m}\langle\frac{1}{D_x}\rangle^\epsilon   |D_x|^\frac{i+1}3 g   \rt  \|_{L^2 }^\frac12 \lt \|  e^{a A^{-\frac{1}{3}}|D_x|^{\frac{2}{3} } t } \langle D_x\rangle^{m}\langle\frac{1}{D_x}\rangle^\epsilon   \py  |D_x|^\frac13  g    \rt \|_{L^2 }^\frac12 \\
		&\quad\times \lt\|  e^{a A^{-\frac{1}{3}}|k-l-\eta|^{\frac{2}{3} } t } \langle k-l-\eta\rangle^m\langle\frac{1}{k-l-\eta}\rangle^\epsilon  |k-l-\eta|^{\frac13} \|  f_{k-l-\eta}(y)\|_{L_y^2}     \rt\|_{L_{k-l-\eta}^2 } \\
		&\quad\times \lt\||k-l-\eta|^{-\frac{4-i}{6}} \langle k-l-\eta\rangle^{-m}\langle\frac{1}{k-l-\eta}\rangle^{-\epsilon}  \rt\|_{L_{k-l-\eta}^2 } \\
		&\lesssim \lt \|  e^{a A^{-\frac{1}{3}}|D_x|^{\frac{2}{3} } t }  |D_x|^\frac13 f   \rt \|_{Y_{m,\epsilon}}\lt \|  e^{a A^{-\frac{1}{3}}|D_x|^{\frac{2}{3} } t }  |D_x|^\frac{i+1}3 g  \rt   \|_{Y_{m,\epsilon}}^{\frac12} \lt \|  e^{a A^{-\frac{1}{3}}|D_x|^{\frac{2}{3} } t }  \nabla  |D_x|^\frac13 g \rt   \|_{Y_{m,\epsilon}}^\frac12.  
	\end{align*}
    
	\underline{\bf Estimate of $I_3$.} For $I_3$, we infer from \eqref{eq:trick3} and \eqref{eq:trick3.5} that
    \begin{gather*}
    |k-l| \lesssim |k-l-\eta|\sim |\eta|,\quad \langle k-l \rangle^2\lesssim  \langle \eta \rangle \langle k-l-\eta \rangle,\\
				1 \leq \langle \frac{1}{k-l-\eta} \rangle^{s} |k-l-\eta|^{s} \lesssim \langle\frac{1}{\eta}\rangle^{s_3}\langle\frac{1}{k-l-\eta}\rangle^{s_3}\left(1+\langle\frac{1}{k-l}\rangle^{s-2 s_3} \right)|k-l-\eta|^{s_1 }|\eta|^{s_2}.
    \end{gather*}
    Then, choosing $s=\frac{4-i}{6}$, $s_1=\frac{1-i}3$, $s_2=\frac{i+2}{6}$ and $s_3=\epsilon$,  we have
	\begin{align*}
		I_{3} &\lesssim \lt\|  e^{a A^{-\frac{1}{3}}|k-l|^{\frac{2}{3} } t }\langle k-l -\eta \rangle^{m  }\langle \eta\rangle^m\langle k-l\rangle^{-m}\rt.\\
		&\lt.\quad\times  \langle\frac{1}{\eta}\rangle^\epsilon\langle\frac{1}{k-l-\eta}\rangle^\epsilon\left(\langle\frac{1}{k-l}\rangle^{\epsilon} +\langle\frac{1}{k-l}\rangle^{\frac{4-i}{6}- \epsilon} \right)
		|k-l-\eta|^\frac13  |\eta|^{ \frac{i+2}6} \| f_{k-l-\eta}(y) \|_{L_y^2}  \|  g_{\eta} (y) \|_{L_y^\infty}   \rt\|_{L_{k-l}^2 L_\eta^1 }\\
		&\lesssim \lt\|  e^{a A^{-\frac{1}{3}}|k-l-\eta|^{\frac{2}{3} } t }  \langle k-l -\eta \rangle^{m} \langle\frac{1}{k-l-\eta}\rangle^\epsilon |k-l-\eta|^{\frac13} \| f_{k-l-\eta}(y) \|_{L_y^2} \rt\|_{L_{k-l-\eta}^2 }    \\
		&\quad\times \lt\|  e^{a A^{-\frac{1}{3}}|\eta|^{\frac{2}{3} } t } \langle  \eta\rangle^m\langle\frac{1}{ \eta}\rangle^\epsilon  |\eta|^\frac{i+2}6 \| g_{\eta}(y) \|_{L_y^\infty}   \rt\|_{L_{\eta}^2 } \lt\| \left(\langle\frac{1}{k-l}\rangle^{\epsilon} +\langle\frac{1}{k-l}\rangle^{\frac{4-i}{6}- \epsilon} \right) \langle k-l\rangle^{-m}\rt\|_{L_{k-l}^2}\\
		&\lesssim  \lt \|  e^{a A^{-\frac{1}{3}}|D_x|^{\frac{2}{3} } t }  |D_x|^\frac13 f  \rt \|_{Y_{m,\epsilon}} \lt \|  e^{a A^{-\frac{1}{3}}|D_x|^{\frac{2}{3} } t }   |D_x|^\frac{i+1}3 g  \rt \|_{Y_{m,\epsilon}}^{\frac12} \lt \|  e^{a A^{-\frac{1}{3}}|D_x|^{\frac{2}{3} } t }\nabla  |D_x|^\frac13 g  \rt  \|_{Y_{m,\epsilon}}^\frac12.
	\end{align*}
	Hence, the estimate \eqref{eq:est of |nabla d|^2} holds by summing up $I_1, I_2, I_3$.
\end{proof}

	\subsection{Nonlinear energy estimates}
	
	Using the result established in the previous subsection, we prove the two nonlinear estimates, i.e., lemmas \ref{lem:est of dx13 nabla d2 d} and \ref{lem:est of dx13 nabla d2}.
    
	\begin{proof}[Proof of Lemma \ref{lem:est of dx13 nabla d2 d}]
		Thanks to the Fourier transform, we obtain
		\begin{align*} 
			& \quad   \frac1A \left|\Re\left( |D_x|^{\frac13}  \left(  |\nabla d|^2 d \right) \left\lvert\, \mathcal{M} e^{2 a A^{-\frac{1}{3}}\left|D_x\right|^{\frac{2}{3} }t }\langle D_x\rangle^{2 m  }\langle\frac{1}{D_x}\rangle^{2 \epsilon} |D_x|^{\frac13} d\right.\right)\right|\\
			&\lesssim    \frac1A \int_{\mathbb{R}^3} e^{2 a A^{-\frac{1}{3}}|k|^{\frac{2}{3} } t }\langle k\rangle^{2 m   }\langle\frac{1}{k}\rangle^{2 \epsilon} |k|^\frac23 \\
            &\qqquad \qqquad \times \left|\int_{\mathbb{R}} \mathcal{M}\left(k, D_y\right)   d_k(y) \cdot   d_l(y)  \nabla_{k-l-\eta} d_{k-l-\eta}(y) \cdot \nabla_{\eta}  d_{\eta}(y)   d y\right| d k d ld\eta , \alabel{eq:Fourier}
		\end{align*}
        where the last term can be estimated by three different domains as follows.

		$\bullet$~\textbf{In $\mathcal{R}_{\text{res}}(k,l)$.}  By \eqref{eq:trick1} and Young inequality, we deduce that
		\begin{align*}  
			&\quad \int_{\mathcal{R}_{\text{res}(k,l)}\times\mathbb{R}} e^{2 a A^{-\frac{1}{3}}|k|^{\frac{2}{3} } t }\langle k\rangle^{2 m   }\langle\frac{1}{k}\rangle^{2 \epsilon} |k|^\frac23 \\
			&\qqquad \qqquad \qquad \times \left|\int_{\mathbb{R}} \mathcal{M}\left(k, D_y\right)   d_k(y) \cdot   d_l(y)  \nabla_{k-l-\eta} d_{k-l-\eta}(y) \cdot \nabla_{\eta}  d_{\eta}(y)   d y\right| d k d ld\eta \\
			&\lesssim \left\|e^{a A^{-\frac{1}{3}}|k|^{\frac{2}{3} }t}\langle k\rangle^{m  }\langle\frac{1}{k}\rangle^\epsilon |k|^\frac23  \|d_k\|_{L_y^2}\right\|_{L_k^2}     \left\|e^{a A^{-\frac{1}{3}}|l|^{\frac{2}{3} } t } \| d_l \|_{L_y^{\infty}}\right\|_{L_l^1} \\
			&\quad \times\left\|e^{a A^{-\frac{1}{3}}|k-l|^{\frac{2}{3} } t }\langle k-l\rangle^{m   } \langle\frac{1}{k-l}\rangle^\epsilon    \int_{\mathbb{R}}  \lt \| \nabla_{k-l-\eta} d_{k-l-\eta}\rt \|_{L_y^2} \lt \|\nabla_\eta d_{\eta}\rt \|_{L_y^\infty} d\eta \right\|_{L_{k-l}^2},
\end{align*}
        and by  Corollary \ref{cor:L infty} ($\alpha=\beta=\frac13$) and Lemma \ref{lem:Preliminary estimates} ($i=0$, $f=g=\nabla d$)  this can be bounded by 
        \begin{align*}  
			& C\lt \|e^{a A^{-\frac{1}{3}}\left|D_x\right|^{\frac{2}{3} } t }    |D_x|^\frac23 d \rt \|_{Y_{m,\epsilon}} \lt  \|e^{a A^{-\frac{1}{3}}\left|D_x\right|^{\frac{2}{3} } t }    |D_x|^\frac13 d\rt \|_{Y_{m,\epsilon}}^{\frac12}  \lt  \|  e^{a A^{-\frac{1}{3}}|D_x|^{\frac{2}{3} } t }     \py |D_x|^\frac13 d    \rt \|_{Y_{m,\epsilon}}^{\frac12} \\
			&\quad \times \lt   \|  e^{a A^{-\frac{1}{3}}|D_x|^{\frac{2}{3} } t }     \nabla |D_x|^\frac13 d    \|_{Y_{m,\epsilon}}^{\frac32} \|  e^{a A^{-\frac{1}{3}}|D_x|^{\frac{2}{3} } t }     \nabla \py |D_x|^\frac13 d    \rt \|_{Y_{m,\epsilon} }^\frac12 . \alabel{case1'}
		\end{align*}

        $\bullet$~\textbf{In $\mathcal{R}_{\text{HL}}(k,l) $.} Using Lemma \ref{lem:Preliminary estimates} ($i=0$, $f=g=\nabla d$)  again, along with \eqref{eq:GN1} and \eqref{eq:trick2}, we have
		\begin{align*}   
			&\quad\int_{\mathcal{R}_{\text{HL}(k,l)}\times\mathbb{R}}   e^{2 a A^{-\frac{1}{3}}|k|^{\frac{2}{3} } t }\langle k\rangle^{2 m   }\langle\frac{1}{k}\rangle^{2 \epsilon} |k|^\frac23 \\
			&\qqquad \qqquad \qquad \times \left|\int_{\mathbb{R}} \mathcal{M}\left(k, D_y\right)   d_k(y) \cdot   d_l(y)  \nabla_{k-l-\eta} d_{k-l-\eta}(y) \cdot \nabla_{\eta}  d_{\eta}(y)   d y\right| d k d ld\eta  \\
			&\lesssim  \lt\|e^{a A^{-\frac{1}{3}}|k|^{\frac{2}{3} }t}\langle k\rangle^{m   }\langle\frac{1}{k}\rangle^\epsilon |k|^\frac23 \| d_k\|_{L_y^2}
            \rt\|_{L_k^2}     \lt\|e^{a A^{-\frac{1}{3}}|l|^{\frac{2}{3} } t } \langle l\rangle^{m   }\langle\frac{1}{l}\rangle^\epsilon |l|^\frac13\| d_l \|_{L_y^{\infty}}\rt\|_{L_l^2} \\
			&\quad \times\left\|e^{a A^{-\frac{1}{3}}|k-l|^{\frac{2}{3} } t }\langle k-l\rangle^{m   } \langle\frac{1}{k-l}\rangle^\epsilon    \int_{\mathbb{R}}  \|\nabla_{k-l-\eta}  d_{k-l-\eta}\|_{L_y^2} \|\nabla_\eta d_{\eta}\|_{L_y^\infty} d\eta \right\|_{L_{k-l}^2} \\
			&\quad\times \lt \||k-l|^{-\frac13}\langle k-l\rangle^{-m}\langle\frac{1}{k-l}\rangle^{-\epsilon}\rt \|_{L_{k-l}^2}\\
			&\lesssim \lt  \|e^{a A^{-\frac{1}{3}}\left|D_x\right|^{\frac{2}{3} } t }    |D_x|^\frac23 d \rt \|_{Y_{m,\epsilon} }  \lt \|e^{a A^{-\frac{1}{3}}\left|D_x\right|^{\frac{2}{3} } t }    |D_x|^\frac13 d \rt \|_{Y_{m,\epsilon} }^{\frac12}    
			\lt \|  e^{a A^{-\frac{1}{3}}|D_x|^{\frac{2}{3} } t }     \py |D_x|^\frac13 d    \rt \|_{Y_{m,\epsilon}}^{\frac12} \\
			&\quad \times   \lt \|  e^{a A^{-\frac{1}{3}}|D_x|^{\frac{2}{3} } t }     \nabla |D_x|^\frac13 d   \rt \|_{Y_{m,\epsilon} }^{\frac32}\lt \|  e^{a A^{-\frac{1}{3}}|D_x|^{\frac{2}{3} } t }     \py \nabla |D_x|^\frac13 d   \rt \|_{Y_{m,\epsilon} }^\frac12. \alabel{case2'}
		\end{align*}

		$\bullet$~\textbf{In $\mathcal{R}_{\text{LH}}(k,l) $.} As in the domain ${\mathcal{R}_{\text{HL}(k,l)}}$, by \eqref{eq:trick3} and  \eqref{eq:trick3.5} we have 
		\begin{align*}  
			&\quad\int_{\mathcal{R}_{\text{LH}(k,l)}\times\mathbb{R}} e^{2 a A^{-\frac{1}{3}}|k|^{\frac{2}{3} } t }\langle k\rangle^{2 m   }\langle\frac{1}{k}\rangle^{2 \epsilon} |k|^\frac23 \\
			&\qqquad \qqquad \qquad \times \left|\int_{\mathbb{R}} \mathcal{M}\left(k, D_y\right)   d_k(y) \cdot   d_l(y) \nabla_{k-l-\eta} d_{k-l-\eta}(y) \cdot \nabla_{\eta}  d_{\eta}(y)   d y\right| d k d ld\eta \\
			& \lesssim \lt\|e^{a A^{-\frac{1}{3}}|k|^{\frac{2}{3} } t }\langle k\rangle^{m }\langle\frac{1}{k}\rangle^\epsilon  |k|^\frac23 \lt\|d_k\rt\|_{L_y^2}\rt\|_{L_k^2} \lt \|e^{a A^{-\frac{1}{3}}|l|^{\frac{2}{3} } t }\langle l\rangle^{m   }\langle\frac{1}{l}\rangle^\epsilon  |l|^{\frac13}  \lt\|d_l\rt\|_{L_y^\infty}\rt\|_{L_l^2} \\
			& \quad \times  \lt \|e^{a A^{-\frac{1}{3}}|k-l|^{\frac{2}{3} } t }\langle k-l\rangle^{m   } \langle\frac{1}{k-l}\rangle^\epsilon    \int_{\mathbb{R}} \lt \| \nabla_{k-l-\eta}   d_{k-l-\eta}\rt\|_{L_y^2} \lt\|\nabla_\eta d_{\eta}\rt\|_{L_y^\infty} d\eta \rt\|_{L_{k-l}^2} \\
			&\quad \times \lt\|  \left(\langle\frac{1}{k}\rangle^\epsilon  +\langle\frac{1}{k}\rangle^{\frac13 -\epsilon}  \right)\langle k\rangle^{-m}\rt\|_{L_k^2},
\end{align*}
            which is controlled by (\ref{case1'}) by using \eqref{eq:GN1} and Lemma \ref{lem:Preliminary estimates} again.
        
		Hence, one collects \eqref{case1'}--\eqref{case2'}, and deduces from \eqref{eq:X norm} and \eqref{eq:Fourier} that
		\begin{align*}
			&\quad   \frac1A \int_{0}^{t} \left|\Re\left(    |\nabla d|^2 d  \left\lvert\, \mathcal{M} e^{2 a A^{-\frac{1}{3}}\left|D_x\right|^{\frac{2}{3} }t }\langle D_x\rangle^{2 m  }\langle\frac{1}{D_x}\rangle^{2 \epsilon} |D_x|^{\frac23} d\right.\right)\right| dt \\
            &\lesssim \frac{1}{A^{\frac13}}\left(A^{\frac16}\lt  \|e^{a A^{-\frac{1}{3}}\left|D_x\right|^{\frac{2}{3} } t }    |D_x|^\frac23 d \rt \|_{L_t^2Y_{m,\epsilon} } \right) \lt \|e^{a A^{-\frac{1}{3}}\left|D_x\right|^{\frac{2}{3} } t }    |D_x|^\frac13 d \rt \|_{L_t^\infty Y_{m,\epsilon} }^{\frac12}    
			\left(A^{\frac14}\lt \|  e^{a A^{-\frac{1}{3}}|D_x|^{\frac{2}{3} } t }     \py |D_x|^\frac13 d    \rt \|_{L_t^2 Y_{m,\epsilon}}^{\frac12}\right) \\
			&\quad \times   \lt \|  e^{a A^{-\frac{1}{3}}|D_x|^{\frac{2}{3} } t }     \nabla |D_x|^\frac13 d   \rt \|_{L_t^\infty Y_{m,\epsilon} }^{\frac32}\left(A^{\frac14}\lt \|  e^{a A^{-\frac{1}{3}}|D_x|^{\frac{2}{3} } t }     \py \nabla |D_x|^\frac13 d   \rt \|_{L_t^2Y_{m,\epsilon} }^\frac12\right)\\
            &\lesssim   \frac{1}{A^{\frac{1}{3}}} \lt\|     |D_x|^\frac13 d \rt\|_{X_{a,m,\epsilon}}^2  \lt\|   \nabla |D_x|^\frac13 d \rt\|_{X_{a,m,\epsilon}}^2, 
		\end{align*}
        which completes the proof.
	\end{proof}

	\begin{proof}[Proof of Lemma \ref{lem:est of dx13 nabla d2}]
		Thanks to the Fourier transform, we have
		\begin{align*} 
			& \quad   \frac1A   \left|\Re\left( |D_x|^\frac13  \left( |\nabla d|^2 e_1\right) \left\lvert\, \mathcal{M} e^{2 a A^{-\frac{1}{3}}\left|D_x\right|^{\frac{2}{3} }t }\langle D_x\rangle^{2 m}\langle\frac{1}{D_x}\rangle^{2 \epsilon} |D_x|^\frac13  d\right.\right)\right| \\
			&\lesssim   \frac1A \int_{\mathbb{R}^2} e^{2 a A^{-\frac{1}{3}}|k|^{\frac{2}{3} } t }\langle k\rangle^{2 m   }\langle\frac{1}{k}\rangle^{2 \epsilon} |k|^\frac23 \left|\int_{\mathbb{R}} \mathcal{M}\left(k, D_y\right)   d_k^1(y)  \nabla_l d_l(y)    \cdot \nabla_{k-l} d_{k-l}(y)  d y\right| d k d l,  \alabel{eq:Fourier2}
		\end{align*}
        which will still be estimated in three different frequency domains.

		$\bullet$~\textbf{In $\mathcal{R}_{\text{res}}(k,l)$.}
  From \eqref{eq:trick1} and Corollary \ref{cor:L infty} ($\alpha=\frac23, \beta=\frac13$),
		we deduce that
		\begin{align*}  
			&\quad \int_{\mathcal{R}_{\text{res}}(k,l)} e^{2 a A^{-\frac{1}{3}}|k|^{\frac{2}{3} } t }\langle k\rangle^{2 m   }\langle\frac{1}{k}\rangle^{2 \epsilon} |k|^\frac23 \left|\int_{\mathbb{R}} \mathcal{M}\left(k, D_y\right)   d_k^1(y)  \nabla_l d_l(y)    \cdot \nabla_{k-l} d_{k-l}(y)  d y\right| d k d l \\
			&\lesssim \lt\|e^{a A^{-\frac{1}{3}}|k|^{\frac{2}{3} }t}\langle k\rangle^{m  }\langle\frac{1}{k}\rangle^\epsilon |k|^\frac13  \lt\|d_k^1\rt\|_{L_y^2}\rt\|_{L_k^2}    \lt \|e^{a A^{-\frac{1}{3}}|l|^{\frac{2}{3} } t }  \lt \| \nabla_l d_l \rt\|_{L_y^{\infty}}\rt\|_{L_l^1} \\
			&\quad \times\lt\|e^{a A^{-\frac{1}{3}}|k-l|^{\frac{2}{3} } t }\langle k-l\rangle^{m   } \langle\frac{1}{k-l}\rangle^\epsilon   |k-l|^\frac13   \lt\| \nabla_{k-l} d_{k-l}\rt\|_{L_y^2}   \rt\|_{L_{k-l}^2} \\
			&\lesssim \lt\|e^{a A^{-\frac{1}{3}}\left|D_x\right|^{\frac{2}{3} } t }    |D_x|^\frac13 d^1 \rt\|_{Y_{m,\epsilon}} \lt\|e^{a A^{-\frac{1}{3}}\left|D_x\right|^{\frac{2}{3} }t}      |D_x|^\frac13  \nabla |D_x|^\frac13 d\rt\|_{Y_{m,\epsilon}}^\frac12       \\
			&\quad \times \lt\|e^{a A^{-\frac{1}{3}}\left|D_x\right|^{\frac{2}{3} }t}    \py\nabla |D_x|^\frac13 d\rt\|_{Y_{m,\epsilon}}^\frac12 \lt\|e^{a A^{-\frac{1}{3}}\left|D_x\right|^{\frac{2}{3} } t }   \nabla |D_x|^\frac13  d\rt \|_{Y_{m,\epsilon}} . \alabel{eq:case11'}
		\end{align*}
		
        $\bullet$~\textbf{In $\mathcal{R}_{\text{HL}}(k,l) $.}
 Based on \eqref{eq:GN1}, it follows from \eqref{eq:trick2} that
		\begin{align*}   
			&\quad\int_{\mathcal{R}_{\text{HL}}(k,l)}   e^{2 a A^{-\frac{1}{3}}|k|^{\frac{2}{3} } t }\langle k\rangle^{2 m   }\langle\frac{1}{k}\rangle^{2 \epsilon} |k|^\frac23 \left|\int_{\mathbb{R}} \mathcal{M}\left(k, D_y\right)   d_k^1(y)  \nabla_l d_l(y)    \cdot \nabla_{k-l} d_{k-l}(y)  d y\right| d k d l \\
			&\lesssim  \lt\|e^{a A^{-\frac{1}{3}}|k|^{\frac{2}{3} }t}\langle k\rangle^{m   }\langle\frac{1}{k}\rangle^\epsilon |k|^\frac13\lt \| d_k^1\rt\|_{L_y^2}\rt\|_{L_k^2}    \lt \|e^{a A^{-\frac{1}{3}}|l|^{\frac{2}{3} } t } \langle l\rangle^{m   }\langle\frac{1}{l}\rangle^\epsilon |l|^\frac12\lt \| \nabla_l d_l\rt \|_{L_y^{\infty}}\rt\|_{L_l^2} \\
			&\quad \times\lt\|e^{a A^{-\frac{1}{3}}|k-l|^{\frac{2}{3} } t }\langle k-l\rangle^{m   } \langle\frac{1}{k-l}\rangle^\epsilon    |k-l|^\frac13 \lt\|\nabla_{k-l} d_{k-l }\rt\|_{L_y^2}\rt \|_{L_{k-l}^2} 
			\lt\||k-l|^{-\frac12}\langle k-l\rangle^{-m}\langle\frac{1}{k-l}\rangle^{-\epsilon}\rt\|_{L_{k-l}^2}\\
			&\lesssim \lt \|e^{a A^{-\frac{1}{3}}\left|D_x\right|^{\frac{2}{3} } t }    |D_x|^\frac13 d^1 \rt\|_{Y_{m,\epsilon}}\lt \|e^{a A^{-\frac{1}{3}}\left|D_x\right|^{\frac{2}{3} }t}    |D_x|^\frac13  \nabla |D_x|^\frac13 d\rt\|_{Y_{m,\epsilon}}^\frac12       \\
			&\quad \times \lt\|e^{a A^{-\frac{1}{3}}\left|D_x\right|^{\frac{2}{3} }t}    \py \nabla |D_x|^\frac13 d\rt\|_{Y_{m,\epsilon}}^\frac12\lt \|e^{a A^{-\frac{1}{3}}\left|D_x\right|^{\frac{2}{3} } t }   \nabla  |D_x|^\frac13  d \rt\|_{Y_{m,\epsilon}}.  \alabel{eq:case12'}
		\end{align*}

$\bullet$~\textbf{In $\mathcal{R}_{\text{LH}}(k,l) $.}
        Using \eqref{eq:trick3} and \eqref{eq:trick3.5} ($s=s_2=\frac12$, $s_1=0$, $s_3=\epsilon$), we have
		\begin{align*}  
			&\quad\int_{\mathcal{R}_{\text{LH}}(k,l)} e^{2 a A^{-\frac{1}{3}}|k|^{\frac{2}{3} } t }\langle k\rangle^{2 m   }\langle\frac{1}{k}\rangle^{2 \epsilon} |k|^\frac23 \left|\int_{\mathbb{R}} \mathcal{M}\left(k, D_y\right)   d_k^1(y)  \nabla_l d_l(y)    \cdot \nabla_{k-l} d_{k-l}(y)  d y\right| d k d l \\
			& \lesssim \lt\|e^{a A^{-\frac{1}{3}}|k|^{\frac{2}{3} } t }\langle k\rangle^{m }\langle\frac{1}{k}\rangle^\epsilon  |k|^\frac13 \lt\|d_k^1\rt\|_{L_y^2}\rt\|_{L_k^2}  \lt\|e^{a A^{-\frac{1}{3}}|l|^{\frac{2}{3} } t }\langle l\rangle^{m   }\langle\frac{1}{l}\rangle^\epsilon  |l|^{\frac12}  \lt\|\nabla_l d_l\rt\|_{L_y^\infty}\rt\|_{L_l^2} \\
			& \quad \times \lt  \|e^{a A^{-\frac{1}{3}}|k-l|^{\frac{2}{3} } t }\langle k-l\rangle^{m   } \langle\frac{1}{k-l}\rangle^\epsilon    |k-l|^\frac13 \lt\| \nabla_{k-l} d_{k-l }\rt\|_{L_y^2}\rt  \|_{L_{k-l}^2} 
			\lt\|  \left(\langle\frac{1}{k}\rangle^\epsilon  +\langle\frac{1}{k}\rangle^{\frac12 -\epsilon}  \right)\langle k\rangle^{-m}\rt\|_{L_k^2},
		\end{align*}
		which can be bounded by the last term in \eqref{eq:case11'} by using \eqref{eq:GN1} again.
		
		Note that the estimates \eqref{eq:Fourier2}--\eqref{eq:case12'} shows that
		\begin{align*}
			&\quad  \left|\Re\left(    |\nabla d|^2   \left\lvert\, \mathcal{M} e^{2 a A^{-\frac{1}{3}}\left|D_x\right|^{\frac{2}{3} }t }\langle D_x\rangle^{2 m  }\langle\frac{1}{D_x}\rangle^{2 \epsilon} |D_x|^{\frac23} d^1\right.\right)\right|\\
			&\lesssim \lt \|e^{a A^{-\frac{1}{3}}\left|D_x\right|^{\frac{2}{3} } t }    |D_x|^\frac13 d^1 \rt\|_{Y_{m,\epsilon}} \lt\|e^{a A^{-\frac{1}{3}}\left|D_x\right|^{\frac{2}{3} }t}    |D_x|^\frac13  \nabla |D_x|^\frac13 d\rt\|_{Y_{m,\epsilon}}^\frac12       \\
			&\quad \times \lt\|e^{a A^{-\frac{1}{3}}\left|D_x\right|^{\frac{2}{3} }t}    \py \nabla |D_x|^\frac13 d\rt\|_{Y_{m,\epsilon}}^\frac12 \lt\|e^{a A^{-\frac{1}{3}}\left|D_x\right|^{\frac{2}{3} } t }   \nabla |D_x|^\frac13  d \rt\|_{Y_{m,\epsilon}},  
		\end{align*}
		and from which, combined with \eqref{eq:X norm}, we deduce that
		\begin{align*}
			&\quad \frac1A \int_{0}^{t} \left|\Re\left(    |\nabla d|^2  \left\lvert\, \mathcal{M} e^{2 a A^{-\frac{1}{3}}\left|D_x\right|^{\frac{2}{3} }t }\langle D_x\rangle^{2 m  }\langle\frac{1}{D_x}\rangle^{2 \epsilon} |D_x|^{\frac23} d^1\right.\right)\right| dt\\
            &\lesssim \frac{1}{A}\lt \|e^{a A^{-\frac{1}{3}}\left|D_x\right|^{\frac{2}{3} } t }    |D_x|^\frac13 d^1 \rt\|_{L_t^\infty Y_{m,\epsilon}} \lt\|e^{a A^{-\frac{1}{3}}\left|D_x\right|^{\frac{2}{3} }t}    |D_x|^\frac13  \nabla |D_x|^\frac13 d\rt\|_{L_t^2 Y_{m,\epsilon}}^\frac12       \\
			&\quad \times \lt\|e^{a A^{-\frac{1}{3}}\left|D_x\right|^{\frac{2}{3} }t}    \py \nabla |D_x|^\frac13 d\rt\|_{L_t^2 Y_{m,\epsilon}}^\frac12 \lt\|e^{a A^{-\frac{1}{3}}\left|D_x\right|^{\frac{2}{3} } t }   \nabla |D_x|^\frac13  d \rt\|_{L_t^2 Y_{m,\epsilon}}\\
			&\lesssim \frac{1}{A^{\frac{1}{6}}} \lt\|     |D_x|^\frac13 d^1 \rt\|_{X_{a,m,\epsilon}} \lt\|     |D_x|^\frac13 d \rt\|_{X_{a,m,\epsilon}} \lt \|    \nabla |D_x|^\frac13 d \rt\|_{X_{a,m,\epsilon}}. 
		\end{align*}
        This completes the proof.
	\end{proof}

	\section{The energy estimate for the higher-order derivative terms $\lt(\px^2, \py^2\rt) |D_x|^{1/3} d$} \label{sec.4}

	In this section, we devote to the proof of the main energy estimate for $\lt(\px^2, \py^2\rt) |D_x|^{1/3} d$, which implies the boundedness of $\nabla d$.

	\begin{Prop} \label{lem:est of pypy dx13 d}
   
		Assume that $0\leq t \leq T$. Then, for $0<a<\frac{1}{16(1+2 \pi)}$ and $\frac16<\epsilon< \frac12<m$, 
		\begin{align*} 
			&\quad\lt\|\lt(\px^2,\py^2\rt) |D_x|^\frac13 d  \rt\|_{X_{a,m,\epsilon}}^2 \\
			&\leq C\lt\|   \lt (\px^2,\py^2\rt) |D_x|^\frac13  d_{\mathrm{in}} \rt\|_{Y_{m,\epsilon}}^2  +C A \lt \|     |D_x|^\frac13 d\rt\|_{X_{a,m,\epsilon}}\lt \|    \lt(\px^2,\py^2\rt) |D_x|^\frac13 d\rt\|_{X_{a,m,\epsilon}}\\
			&\quad + \frac{C}{A^{\frac16}} \lt \|     \lt(\px^2,\py^2\rt)|D_x|^\frac13 d \rt\|_{X_{a,m,\epsilon}}^{\frac32}\lt\|     \nabla|D_x|^{\frac13} d \rt\|_{X_{a,m,\epsilon}}^{\frac52} \\
            &\quad+ \frac{C}{A^{\frac{1}{6}}}  \lt \|     \lt(\px^2,\py^2\rt) |D_x|^\frac13 d \rt\|_{X_{a,m,\epsilon}}^2\lt\| |D_x|^\frac13 d \rt\|_{X_{a,m,\epsilon}}^\frac12\lt\|    \nabla |D_x|^\frac13 d \rt\|_{X_{a,m,\epsilon}}^\frac32 \\
			&\quad+   \frac{C}{A^{\frac{1}{6}}}  \lt\|     \lt(\px^2,\py^2\rt)|D_x|^\frac13 d^1 \rt\|_{X_{a,m,\epsilon}} \lt\|\lt (\px^2,\py^2\rt) |D_x|^\frac13 d \rt\|_{X_{a,m,\epsilon}}\lt \|     \nabla|D_x|^\frac13 d \rt\|_{X_{a,m,\epsilon}}\\
			&\quad+\frac{C}{A^\frac13} \lt\|      \lt(\px^2,\py^2\rt)|D_x|^\frac13 d \rt\|_{X_{a,m,\epsilon}} \lt\|    \omega \rt\|_{X_{a,m,\epsilon}}\lt(\lt\|      \lt(\px^2,\py^2\rt)|D_x|^\frac13 d \rt\|_{X_{a,m,\epsilon}} +\lt\|     \nabla|D_x|^\frac13 d \rt\|_{X_{a,m,\epsilon}} \rt).
		\end{align*}
	\end{Prop}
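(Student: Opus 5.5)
We apply $\partial_{i,ph}^2$ ($i=1,2$) and $|D_x|^{1/3}$ to the $d$-equation in \eqref{eq:main}. Since $\px$ commutes with $y\px$ but $\py$ does not, we get, for $h_i:=\p^2|D_x|^{1/3}d$,
\begin{align*}
\partial_t h_i + y\px h_i - \frac1A\Delta h_i = -\delta_{i2}\,2\px\py|D_x|^{\frac13}d - \frac1A \p^2|D_x|^{\frac13}(u\cdot\nabla d) + \frac1A\p^2|D_x|^{\frac13}\big(|\nabla d|^2(d+e_1)\big).
\end{align*}
As in the proof of Proposition \ref{lem:est of |D_x|13 d}, we test with $2\mathcal{M}e^{2aA^{-1/3}|D_x|^{2/3}t}\langle D_x\rangle^{2m}\langle\frac1{D_x}\rangle^{2\epsilon}h_i$. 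We then use \eqref{eq:crucial M}, \eqref{eq:m} and \eqref{eq:bound of M}, absorbing the $-\frac1{2A}\|\py h_i\|^2$ loss into the viscous term, and integrate in time. The result is $\|h_i\|_{X_{a,m,\epsilon}}^2$ controlled by the initial data plus three time-integrated forcing pairings.

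\textbf{Commutator term.} The pairing of $2\px\py|D_x|^{1/3}d$ with $h_2$ is the troublesome linear one; after the time rescaling it carries no factor $1/A$. We bound it by Cauchy--Schwarz in $L^2_tY_{m,\epsilon}$:
\[
\|\px\py|D_x|^{1/3}d\|\,\|\py^2|D_x|^{1/3}d\|.
\]
We convert each factor to an $X$-norm component. For $\px\py|D_x|^{1/3}d$, write $k\,\py\hat d$ and use the $A^{-1}\|\nabla\cdot\|^2$ component of $\||D_x|^{1/3}d\|_X$; this costs $A^{1/2}$. For $\py^2|D_x|^{1/3}d$, use the $A^{-1/3}\||D_x|^{1/3}\cdot\|^2$ or viscous component of $\|h_2\|_X$, costing at most $A^{1/2}$ (interpolating $\px\py$ between $\px^2$ and $\py^2$ if needed). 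Together these give the term $CA\||D_x|^{1/3}d\|_X\|(\px^2,\py^2)|D_x|^{1/3}d\|_X$. We accept this crude factor $A$, as the statement indicates.

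\textbf{Transport term.} We distribute $\p^2$ by Leibniz. The part $u\cdot\nabla h_i$ is handled by \eqref{eq:uf-estimate}, and the part with $|D_x|^{1/3}$ falling on $u$ by Lemma \ref{lem:est of dx u}. The remaining terms $\p u\cdot\nabla\p|D_x|^{1/3}d$ and $\p^2u\cdot\nabla|D_x|^{1/3}d$ are handled as follows:
\begin{itemize}
\item Use $\p u\sim\omega$ via Biot--Savart, with \eqref{eq:GN1} and Corollary \ref{cor:L infty}.
\item For the $\p^2 u$ term, integrate one derivative by parts onto the test function.
\item Apply the three-region split of Lemma \ref{lem:freq_estimates}.
\end{itemize}
This should produce $A^{-1/3}\|h\|_X\|\omega\|_X(\|h\|_X+\|\nabla|D_x|^{1/3}d\|_X)$.

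\textbf{Nonlinear term $|\nabla d|^2(d+e_1)$.} We expand $\p^2$ into:
\begin{itemize}
\item $\nabla\p^2 d\cdot\nabla d\,(d+e_1)$;
\item $|\nabla\p d|^2(d+e_1)$;
\item terms with derivatives on the factor $d$, such as $|\nabla d|^2\p^2 d$ and $\nabla d\cdot\nabla\p d\,\p d$.
\end{itemize}
For the top-order pieces, we move one $\p$ onto the test function so that at most second derivatives appear. Each trilinear or quadrilinear convolution is split as in the proofs of Lemmas \ref{lem:est of dx13 nabla d2 d}--\ref{lem:est of dx13 nabla d2}, into $\mathcal R_{\rm res},\mathcal R_{\rm HL},\mathcal R_{\rm LH}$, with the inner convolution handled by Lemma \ref{lem:Preliminary estimates} (the two-layer decomposition) and $L^\infty_y$ factors by Corollary \ref{cor:L infty}. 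The $e_1$ part gives the $A^{-1/6}\|h^1\|\,\|h\|\,\|\nabla|D_x|^{1/3}d\|$ term, with the same time-weight distribution as Lemma \ref{lem:est of dx13 nabla d2}. The $d$ part gives the two remaining $A^{-1/6}$ terms. The exponents $\frac32,\frac52$ and $2,\frac12,\frac32$ come from choosing which factors take $L^\infty_t$ and which take the $A^{1/6}$- or $A^{1/4}$-weighted $L^2_t$ norms, as in Lemma \ref{lem:est of dx13 nabla d2 d}.

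\textbf{Main obstacle.} I expect the hardest part to be the bookkeeping of derivative placement in the $|\nabla d|^2 d$ terms, so that no third derivatives survive. Any $\py$ landing on the test function must be absorbed by the $A^{-1}\|\nabla h\|^2$ dissipation, and each term must still retain a net $A^{-1/6}$ gain.
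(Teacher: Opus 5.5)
Apart from one step, your plan follows the paper's proof. Both use the same weighted energy identity; the paper applies $\p^2$ and tests against $\p^2|D_x|^{2/3}d$, which is equivalent to your setup. Both move one $\p$ onto the test function so that it carries $\q^3$ and is paid for by the viscous part of $\|(\px^2,\py^2)|D_x|^{1/3}d\|_{X_{a,m,\epsilon}}$. Both use the three-region split with Lemma~\ref{lem:Preliminary estimates} for the inner convolutions. For the transport term the paper does not call \eqref{eq:uf-estimate} or Lemma~\ref{lem:est of dx u}; it moves one $\p$ to the test function and bounds the four pieces $\Phi_1,\dots,\Phi_4$ of Lemma~\ref{lem:est of py^2 u} directly. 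Your variant there is a harmless difference.

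The step that fails as written is the commutator term, which is exactly the one producing $CA\||D_x|^{1/3}d\|_X\|(\px^2,\py^2)|D_x|^{1/3}d\|_X$. Besides the common weight, the $L^2_t$ pieces of an $X_{a,m,\epsilon}$ norm carry only the symbols $A^{-1/2}|(k,\xi)|$, $A^{-1/6}|k|^{1/3}$ and $|k|/|(k,\xi)|$.
\begin{itemize}
\item $\|\px\py|D_x|^{1/3}d\|_{L^2_tY_{m,\epsilon}}$ has symbol $|k|^{4/3}|\xi|$. It is not bounded by $A^{1/2}\||D_x|^{1/3}d\|_X$, or by any multiple of it, since the viscous piece only gives $|k|^{1/3}|(k,\xi)|$ and is one power of $|k|$ short.
\item $\|\py^2|D_x|^{1/3}d\|_{L^2_tY_{m,\epsilon}}$ is not bounded by any multiple of $\|\py^2|D_x|^{1/3}d\|_X$. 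All three symbols vanish as $(k,\xi)\to0$ with $|k|\ll|\xi|$, and the $L^\infty_t$ piece is useless on $[0,t]$ with $t$ unbounded.
\end{itemize}
Your Cauchy--Schwarz split puts one derivative too many on the first factor and one too few on the second.

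The fix, which is what the paper does, is to move $\px$ across the pairing before Cauchy--Schwarz. This is allowed because $\px$ is skew-adjoint and commutes with $W:=\mathcal M e^{2aA^{-1/3}|D_x|^{2/3}t}\langle D_x\rangle^{2m}\langle\frac1{D_x}\rangle^{2\epsilon}$:
\[
\Bigl|\bigl(\px\py|D_x|^{\frac13}d \bigm| W\,\py^2|D_x|^{\frac13}d\bigr)\Bigr|
=\Bigl|\bigl(\py|D_x|^{\frac13}d \bigm| W\,\px\py^2|D_x|^{\frac13}d\bigr)\Bigr|.
\]
Now both factors lie in viscous components, each at cost $A^{1/2}$:
\[
\|\py|D_x|^{\frac13}d\|_{L^2_tY_{m,\epsilon}}\le A^{\frac12}\||D_x|^{\frac13}d\|_{X_{a,m,\epsilon}},
\qquad
\|\px\py^2|D_x|^{\frac13}d\|_{L^2_tY_{m,\epsilon}}\le A^{\frac12}\|\py^2|D_x|^{\frac13}d\|_{X_{a,m,\epsilon}}.
\]
The paper writes this as $\|\nabla|D_x|^{1/3}d\|\,\|\nabla\px\py|D_x|^{1/3}d\|$ and then uses $|k\xi|\le\frac12(k^2+\xi^2)$.

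Alternatively, you could keep your split and interpolate each factor, for example
\[
\|\py^2|D_x|^{\frac13}d\|_{L^2_tY_{m,\epsilon}}^2\le\|\py|D_x|^{\frac13}d\|_{L^2_tY_{m,\epsilon}}\|\py^3|D_x|^{\frac13}d\|_{L^2_tY_{m,\epsilon}},
\]
and similarly for $\px\py|D_x|^{\frac13}d$ using $\py\px^2|D_x|^{\frac13}d$. So the final $CA$ bound you state is correct. The one-component-per-factor justifications you give for it are not.
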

	
	To deal with the energy estimates involving higher-order derivatives, we establish three lemmas that will be used to prove the above proposition.
	\begin{Lem} \label{lem:est of pypy nabla d2 d}
		Let $0<a<\frac{1}{16(1+2 \pi)}$ and $\frac16 <\epsilon< \frac12 <m$. Then,
		\begin{align*}
			& \quad   \frac1A \int_{0}^{t} \left|\Re\left( \p^2  \left(  |\nabla d|^2 d \right) \left\lvert\, \mathcal{M} e^{2 a A^{-\frac{1}{3}}\left|D_x\right|^{\frac{2}{3} }t }\langle D_x\rangle^{2 m  }\langle\frac{1}{D_x}\rangle^{2 \epsilon}  \p^2 |D_x|^\frac23 d\right.\right)\right| dt\\
			&\lesssim \frac{1}{A^{\frac16}}  \lt\|     \lt(\px^2,\py^2\rt)|D_x|^\frac13 d \rt\|_{X_{a,m,\epsilon}}^{\frac32}\lt\|     \nabla|D_x|^{\frac13} d \rt\|_{X_{a,m,\epsilon}}^{\frac52} \\
            &\quad+ \frac{1}{A^{\frac{1}{6}}}  \lt \|     \lt(\px^2,\py^2\rt) |D_x|^\frac13 d \rt\|_{X_{a,m,\epsilon}}^2\lt\| |D_x|^\frac13 d \rt\|_{X_{a,m,\epsilon}}^\frac12\lt\|    \nabla |D_x|^\frac13 d\rt \|_{X_{a,m,\epsilon}}^\frac32
		\end{align*}
        holds for $i=1,2$, where $\p$ is defined in \eqref{eq:ph-fr}.
	\end{Lem}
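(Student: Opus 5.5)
We follow the scheme of Lemma \ref{lem:est of dx13 nabla d2 d}, now with two extra derivatives $\p^2$ landing on the cubic product $|\nabla d|^2 d$. The first step is to pass to Fourier variables in $x$, as in \eqref{eq:Fourier}, and expand $\p^2(|\nabla d|^2 d)$ by the Leibniz rule. Up to harmless constants this produces three types of terms:
\[
\text{(a) } (\q^2 d)\,|\nabla d|^2,\qquad \text{(b) } (\q d)\,(\q\nabla d)\cdot \nabla d,\qquad \text{(c) } d\,\bigl((\q^2\nabla d)\cdot\nabla d + |\q\nabla d|^2\bigr).
\]
The test function is $\mathcal{M} e^{2aA^{-1/3}|D_x|^{2/3}t}\langle D_x\rangle^{2m}\langle 1/D_x\rangle^{2\epsilon}\p^2|D_x|^{2/3}d$. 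We split $|k|^{2/3}=|k|^{1/3}\cdot|k|^{1/3}$: one factor $|k|^{1/3}$ stays on $\p^2 d_k$, producing the $X$-norm quantity $\|(\px^2,\py^2)|D_x|^{1/3}d\|$. The other factor $|k|^{1/3}$ is distributed onto the product via the inequality $|k|^{1/3}\lesssim |k-l|^{1/3}+|l|^{1/3}$ used in the proof of Proposition \ref{lem:est of |D_x|13 d}.

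The second step handles each resulting trilinear or quadrilinear form with the three-region decomposition $\mathcal{R}_{\rm res},\mathcal{R}_{\rm HL},\mathcal{R}_{\rm LH}$ of Lemma \ref{lem:freq_estimates}. The inner convolution in $\eta$ is treated by the two-layer decomposition \eqref{eq:twolayer}, packaged in Lemma \ref{lem:Preliminary estimates}.
\begin{itemize}
\item In (a) and (b), the factors with a lower number of derivatives (say $\nabla d$, or $d$ itself) are put in $L^\infty_y$ via Corollary \ref{cor:L infty}. The top-order factor ($\q^2 d$ or $\q\nabla d$) goes in $L^2_y$ and is measured by $\|(\px^2,\py^2)|D_x|^{1/3}d\|_{Y_{m,\epsilon}}$; the mixed derivative $\px\py$ is controlled by $\px^2,\py^2$ via Plancherel.
\item Lemma \ref{lem:Preliminary estimates} with $f=\nabla d$ or $f=\nabla^2 d$ and $g=\nabla d$ then bounds the inner sum by products of half-powers of $\|\nabla|D_x|^{1/3}d\|$ and $\|\py\nabla|D_x|^{1/3}d\|\lesssim\|(\px^2,\py^2)|D_x|^{1/3}d\|$.
\item The term (c), containing the undifferentiated factor $d_l$, is the analogue of the resonant case of Lemma \ref{lem:est of dx13 nabla d2 d}. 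We put $d_l$ in $L^1_lL^\infty_y$ with $\alpha=\beta=\frac13$, which yields $\||D_x|^{1/3}d\|^{1/2}\|\nabla|D_x|^{1/3}d\|^{1/2}$. This produces the second term on the right-hand side.
\end{itemize}

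The third step is time integration and counting powers of $A$. Time-$L^2$ quantities come from the $X$-norm weights: $A^{-1/2}$ for $\nabla(\cdot)$ and $A^{-1/6}$ for $|D_x|^{1/3}(\cdot)$. We need two $L^2_t$ factors among the four or five factors, arranged to give a total of at least $A^{-5/6}$. Combined with the prefactor $\frac1A$ and the multiplier bound \eqref{eq:bound of M}, this leaves at most $A^{-1/6}$. For example, take $\p^2|D_x|^{1/3}d_k$ in $L^2_t$ through the $|D_x|^{1/3}$ dissipation (costing $A^{1/6}$) and one $\py\nabla|D_x|^{1/3}d$ half-power through the $\nabla$-dissipation. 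The prefactor is then $A^{-1}\cdot A^{1/6}\cdot A^{1/2}\cdot(\dots)$, and the remaining factors are balanced into the two stated products $\|(\px^2,\py^2)|D_x|^{1/3}d\|^{3/2}\|\nabla|D_x|^{1/3}d\|^{5/2}$ and $\|(\px^2,\py^2)|D_x|^{1/3}d\|^{2}\||D_x|^{1/3}d\|^{1/2}\|\nabla|D_x|^{1/3}d\|^{3/2}$.

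The main obstacle is the top-order terms where all of $\q^2$ and the extra $|k|^{1/3}$ fall on one factor, e.g.\ $d\,(\q^2\nabla d)\cdot\nabla d$: this contains three derivatives, one more than the energy controls. We would first try to handle it by integrating by parts in $y$ (for $i=2$), or by moving one $k$-factor back to the test function (for $i=1$) using $|k|\lesssim|k-l|+|l|$. This converts the term into $\q\nabla d$ tested against $\q^3|D_x|^{2/3}d$. The extra derivative is then absorbed by the $\frac1A\|\nabla(\cdot)\|^2_{L^2L^2}$ part of the $X$-norm, i.e.\ $\|\nabla\p^2|D_x|^{1/3}d\|$ at the cost of $A^{1/2}$. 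We would check, region by region, that the $L^1_k$ integrability conditions ($\epsilon>\frac16$, $m>\frac12$) still hold after this redistribution. In the $\mathcal{R}_{\rm LH}$ region, \eqref{eq:trick3.5} must be invoked with exponents adjusted to the new derivative count.
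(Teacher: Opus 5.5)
Your route differs from the paper's, and as written the step that actually fixes the right-hand side does not work.

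\textbf{What the paper does.} The paper never expands $\p^2(|\nabla d|^2d)$. It integrates by parts once, for every term and for both $i=1,2$: $(\p^2F\mid T)=-(\p F\mid \p T)$ with $F=|\nabla d|^2d$. Only $\p F=(\p d)\,|\nabla d|^2+2(\nabla d\cdot\nabla\p d)\,d$ survives, which gives just $II_1$ and $II_2$ tested against the weighted $\q^3|D_x|^{2/3}d$.

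In every region one factor $|k|^{1/3}$ stays on $\q^3d_k$. That factor is taken in $L^2_t$ through the $\nabla$-dissipation part of $\|(\px^2,\py^2)|D_x|^{1/3}d\|_{X_{a,m,\epsilon}}$, at cost $A^{1/2}$. The other $|k|^{1/3}$ goes onto the $(k-l)$-product. There, Lemma \ref{lem:Preliminary estimates} with $i=1$ supplies one half-power of $|D_x|^{1/3}$-dissipation (cost $A^{1/12}$) and one half-power of $\nabla$-dissipation (cost $A^{1/4}$). Hence $A^{-1}A^{1/2}A^{1/12}A^{1/4}=A^{-1/6}$.

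The two terms then land exactly on the two stated products. $II_1$, with $\q d_l$ in $L^1_lL^\infty_y$, gives the first. $II_2$, with $d_l$ in $L^1_lL^\infty_y$ ($\alpha=\beta=\frac13$) and $g=\nabla^2d$ in Lemma \ref{lem:Preliminary estimates}, gives the second.

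\textbf{Where your proposal fails.} First, the ``main obstacle'' is misdiagnosed. The term $d\,(\q^2\nabla d)\cdot\nabla d$ is not beyond the energy. With exponential weights suppressed, $\||D_x|^{1/3}\nabla^3d\|_{L^2_tY_{m,\epsilon}}\lesssim A^{1/2}\|(\px^2,\py^2)|D_x|^{1/3}d\|_{X_{a,m,\epsilon}}$, which is the same $\nabla$-dissipation you invoke after integrating by parts. Also, for $i=1$ integration by parts is exact on the Fourier side; no inequality $|k|\lesssim|k-l|+|l|$ is involved.

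Second, and more seriously, your time/$A$ bookkeeping is inconsistent and does not produce the stated products.
\begin{itemize}
\item Once one $|k|^{1/3}$ has moved onto the product, the test factor $\p^2|D_x|^{1/3}d$ has no spare $|D_x|^{1/3}$. So it cannot be placed in $L^2_t$ via the $|D_x|^{1/3}$-dissipation.
\item A half-power of $\nabla$-dissipation costs $A^{1/4}$, not $A^{1/2}$. Your time exponents $\frac12+\frac14$ do not sum to $1$, and in any case $A^{-1}A^{1/6}A^{1/2}=A^{-1/3}$, not $A^{-1/6}$.
\item Complete it consistently: give the test function all of $|k|^{2/3}$ in $L^2_t$ and add two half-powers of $\nabla$-dissipation. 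Then term (a), already in the resonant region, comes out as $A^{-1/3}\|(\px^2,\py^2)|D_x|^{1/3}d\|_{X_{a,m,\epsilon}}^2\|\nabla|D_x|^{1/3}d\|_{X_{a,m,\epsilon}}^2$.
\item Counting $\nabla$'s, that product has total order $6$, whereas both stated products have order $\frac{11}{2}$. For $d$ oscillating at a large frequency $\nu$ in $y$ it exceeds the stated right-hand side by a factor of order $A^{-1/6}\nu^{1/2}$, so it does not imply the lemma.
\end{itemize}

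\textbf{What would repair it.} To reach the stated form you need three quarters of the time integrability from $\nabla$-dissipation and one quarter from $|D_x|^{1/3}$-dissipation. In your Leibniz route this means, for instance, putting the test factor in $L^2_t$ via $\|\p^2|D_x|^{1/3}d\|_{L^2_tY_{m,\epsilon}}\le A^{1/2}\|\nabla|D_x|^{1/3}d\|_{X_{a,m,\epsilon}}$. The $|D_x|^{1/3}$-dissipation would then come from the $|D_x|^{2/3}g$ factor of Lemma \ref{lem:Preliminary estimates}. With such an allocation your expansion can probably be closed term by term and region by region. However, that verification is the actual content of the lemma, and your proposal asserts rather than checks that the factors ``balance into the two stated products.'' The paper's single global integration by parts makes this allocation essentially forced.
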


	\begin{Lem} \label{lem:est of pypy nabla d2 e1}
		For $0<a<\frac{1}{16(1+2 \pi)}$, $0<\epsilon< \frac12 <m$ and $i=1,2$, the following estimate holds:
		\begin{align*}
			& \quad  \frac1A \int_{0}^{t} \left|\Re\left( \p^2  \left( |\nabla d|^2 e_1\right) \left\lvert\, \mathcal{M} e^{2 a A^{-\frac{1}{3}}\left|D_x\right|^{\frac{2}{3} }t }\langle D_x\rangle^{2 m}\langle\frac{1}{D_x}\rangle^{2 \epsilon} \p^2 |D_x|^\frac23  d\right.\right)\right| dt\\
			&\lesssim  \frac{1}{A^{\frac{1}{6}}}\lt  \|    \lt(\px^2,\py^2\rt) |D_x|^\frac13 d^1 \rt\|_{X_{a,m,\epsilon}} \lt\|     \lt(\px^2,\py^2\rt) |D_x|^\frac13 d \rt\|_{X_{a,m,\epsilon}} \lt\|     \nabla|D_x|^\frac13 d \rt\|_{X_{a,m,\epsilon}}.
		\end{align*}
	\end{Lem}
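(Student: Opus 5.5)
We prove Lemma \ref{lem:est of pypy nabla d2 e1} by mimicking the proof of Lemma \ref{lem:est of dx13 nabla d2}, with two more derivatives placed on the product. On the Fourier side in $x$, the inner product equals (up to constants) an integral over $(k,l)$ with weight $e^{2aA^{-1/3}|k|^{2/3}t}\langle k\rangle^{2m}\langle 1/k\rangle^{2\epsilon}|k|^{2/3}$ and integrand
\begin{align*}
\int_{\mathbb{R}}\mathcal{M}(k,D_y)\, \q^2 d^1_k(y)\; \q^2\big(\nabla_l d_l\cdot\nabla_{k-l}d_{k-l}\big)(y)\,dy .
\end{align*}
By Leibniz, $\q^2$ on the product produces three kinds of terms: (a) both derivatives on one factor, $\q^2\nabla d\cdot \nabla d$; (b) one derivative on each factor, $\q\nabla d\cdot\q\nabla d$. (For $i=1$ the factor $k^2$ splits as $(l+(k-l))^2$, which has the same structure.) The multiplier $\mathcal{M}$ is bounded, so it plays no role. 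In every term the test function $\q^2 |D_x|^{2/3} d^1$ carries the factor $|k|^{1/3}\q^2|k|^{1/3}\|d^1_k\|_{L^2_y}$. It is therefore controlled in $L^2_tY_{m,\epsilon}$ by $A^{1/6}\|(\px^2,\py^2)|D_x|^{1/3}d^1\|_{X}$, using the $A^{-1/3}\||D_x|^{1/3}\cdot\|^2$ part of the $X$-norm.

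For each Leibniz term we split the $(k,l)$ integration into $\mathcal{R}_{\text{res}}$, $\mathcal{R}_{\text{HL}}$ and $\mathcal{R}_{\text{LH}}$, exactly as in \eqref{eq:case11'}--\eqref{eq:case12'}.
\begin{itemize}
\item In $\mathcal{R}_{\text{res}}$ we use \eqref{eq:trick1} to move the weights and the remaining $|k|^{1/3}$ onto the $k-l$ factor. We put the $l$-factor in $L^\infty_y$ and $L^1_l$ via Corollary \ref{cor:L infty}.
\item In $\mathcal{R}_{\text{HL}}$ the weights move to $l$ by \eqref{eq:trick2}. The $k-l$ factor is then placed in $L^1_{k-l}$ through $\||k-l|^{-1/2}\langle k-l\rangle^{-m}\langle 1/(k-l)\rangle^{-\epsilon}\|_{L^2}<\infty$, together with \eqref{eq:GN1}.
\item In $\mathcal{R}_{\text{LH}}$ we use \eqref{eq:trick3}--\eqref{eq:trick3.5} with $s=s_2=\tfrac12$, $s_1=0$, $s_3=\epsilon$. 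The remaining $L^2_k$ norm of $(\langle 1/k\rangle^\epsilon+\langle 1/k\rangle^{1/2-\epsilon})\langle k\rangle^{-m}$ is finite for $0<\epsilon<\frac12<m$, which is why only $\epsilon>0$ is required here.
\end{itemize}

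The key choice is which factor goes into $L^\infty_y$, always the one carrying fewer derivatives. In term (a), $\nabla d$ goes in $L^\infty$. By \eqref{eq:GN1} it is bounded by $\||D_x|^{2/3}\nabla d\|^{1/2}\|\py\nabla|D_x|^{1/3}d\|^{1/2}$, and both pieces are $L^2_t$ quantities of size $A^{1/6}$ and $A^{1/2}$ times $X$-norms, interpolated against $\nabla|D_x|^{1/3}d$ and $(\px^2,\py^2)|D_x|^{1/3}d$. Meanwhile $\q^2\nabla d$ goes in $L^2_y$, and it is a third-order quantity. To avoid it we integrate by parts in $y$ (or move one $k$ across for $i=1$), transferring one derivative to the test function or to the other factor. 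After this, every factor involves at most two derivatives of $d$ weighted by $|D_x|^{1/3}$, and mixed derivatives $\px\py$ are controlled by $(\px^2,\py^2)$ via Plancherel.

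\emph{Main obstacle.} The hardest step is the time bookkeeping. We need exactly one $L^\infty_t$ factor and a total $L^2_t$ weight giving $A^{-1}\cdot A^{1/6}\cdot A^{1/2}\cdot A^{1/6}=A^{-1/6}$. We place $\nabla|D_x|^{1/3}d$ in $L^\infty_tY$. The highest-order factor $\py(\px^2,\py^2)|D_x|^{1/3}d$ goes in $L^2_t$ with gain $A^{1/2}$, arising from the dissipation term $A^{-1}\|\nabla\cdot\|_{L^2L^2}^2$. The $|D_x|^{1/3}$ gains are used at most once each in $L^2_t$. If a $y$-integration by parts lands three derivatives on $d^1$ instead, we handle it symmetrically using $\nabla(\px^2,\py^2)|D_x|^{1/3}d^1\in L^2_t$ with factor $A^{1/2}$, putting $(\px^2,\py^2)|D_x|^{1/3}d$ in $L^2_t$ through the $|D_x|^{1/3}$ gain. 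The powers still balance to $A^{-1/6}$ and yield the stated product.
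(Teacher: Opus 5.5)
Your proposal has a genuine gap. The way you allocate the weight $|k|^{2/3}$ and the time integrability is inconsistent, and the version you actually commit to does not close.

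You spend all of $|k|^{2/3}$ on the test function, so that $|D_x|^{1/3}\p^2|D_x|^{1/3}d^1$ lies in $L^2_tY_{m,\epsilon}$ with gain $A^{1/6}$. Then there is no ``remaining $|k|^{1/3}$'' for the product. Yet every norm available for the factors of $|\nabla d|^2$ carries at least one $|D_x|^{1/3}$, and your $L^\infty_y$ bound $\||D_x|^{2/3}\nabla d\|^{1/2}\|\py\nabla|D_x|^{1/3}d\|^{1/2}$ even needs $|\cdot|^{1/2}$. In $\mathcal{R}_{\text{res}}$ the only way to create $|k-l|^{1/3}$ is $1\lesssim|k-l|^{1/3}|l|^{-1/3}$. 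This forces a factor at least as singular as $|l|^{-2/3}$ on the $L^1_l$ side, and $\||l|^{-2/3}\langle l\rangle^{-m}\langle\frac1l\rangle^{-\epsilon}\|_{L^2_l}$ is finite only for $\epsilon>\frac16$, not for $\epsilon>0$.

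The time count fails as well. With the test function in $L^2_t$ and both halves of the $L^\infty_y$ factor $\nabla d$ in $L^2_t$, the remaining factor $\q^2\nabla d$ would have to be in $L^\infty_t$, which is not available for a third-order quantity. Your tally $A^{-1}\cdot A^{1/6}\cdot A^{1/2}\cdot A^{1/6}$ with a single $L^\infty_t$ factor amounts to three full $L^2_t$ factors, which H\"older in $t$ does not allow. In your last paragraph $\nabla|D_x|^{1/3}d$ has also silently moved to $L^\infty_t$.

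Integrating by parts does not remove third-order quantities either. In the trilinear $y$-integral, the derivative taken off $\q^2\nabla d$ falls by Leibniz on the test function too, so a term with $\q^3 d^1_k$ is unavoidable. Since $|D_x|^{1/3}\nabla^3|D_x|^{1/3}d^1$ is not controlled, that term cannot also carry $|k|^{2/3}$. This is exactly the term the paper builds on:
\begin{itemize}
\item It integrates by parts once and keeps a single $|k|^{1/3}$ on the test function $|k|^{1/3}\q^3 d^1_k$. This lies in $L^2_tY_{m,\epsilon}$ with gain $A^{1/2}$, from the dissipative part of $\|(\px^2,\py^2)|D_x|^{1/3}d^1\|_{X_{a,m,\epsilon}}$.
\item The other $|k|^{1/3}$ goes to the product, which becomes $l(k-l)\q d_l\cdot d_{k-l}$ and $\q\py d_l\cdot\py d_{k-l}$.
\item For $\q\py d_l\cdot\py d_{k-l}$ in $\mathcal{R}_{\text{res}}$, the second-order factor goes into $L^\infty_yL^1_l$ by Corollary~\ref{cor:L infty} with $\alpha=\frac23$, $\beta=\frac13$; this is where $\epsilon>0$ suffices. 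It contributes $(A^{1/6})^{1/2}(A^{1/2})^{1/2}$ in $L^2_t$, while $\nabla|D_x|^{1/3}d$ stays in $L^\infty_t$.
\item The total is $A^{-1}A^{1/2}A^{1/12}A^{1/4}=A^{-1/6}$.
\end{itemize}

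Alternatively, you could skip the integration by parts. Keep the test function $\p^2|D_x|^{1/3}d^1$ in $L^\infty_t$ with one $|k|^{1/3}$, and put $\q^2\nabla d$ in $L^2_t$ with gain $A^{1/2}$. Then your $L^\infty_y$ bound for $\nabla d$ in term (a) gives the same $A^{-1}A^{1/2}A^{1/12}A^{1/4}$. In term (b), treat one second-order factor the same way and put the other in $L^2_t$ through the dissipative part of $\|\nabla|D_x|^{1/3}d\|_{X_{a,m,\epsilon}}$.
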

	\begin{Lem} \label{lem:est of py^2 u}
		For $0<a<\frac{1}{16(1+2 \pi)}$, $\frac16<\epsilon<\frac12<m$ and $i=1,2$, we have
		\begin{align*}
			&\quad \frac1A \int_0^t \left| \Re\left( \p^2   \lt( u \cdot \nabla d\rt) \left\lvert\, \mathcal{M} e^{2 a A^{-\frac{1}{3}}\left|D_x\right|^{\frac{2}{3} } t}\langle D_x\rangle^{2 m}\langle\frac{1}{D_x}\rangle^{2 \epsilon} \p^2  |D_x|^\frac23 d\right.\right)  \right|dt \\
			&\lesssim \frac{1}{A^\frac13}\lt \|      \lt(\px^2,\py^2\rt)|D_x|^\frac13 d \rt\|_{X_{a,m,\epsilon}}\lt \|    \omega \rt\|_{X_{a,m,\epsilon}}\lt(\lt\|      \lt(\px^2,\py^2\rt)|D_x|^\frac13 d \rt\|_{X_{a,m,\epsilon}} +\lt\|     \nabla|D_x|^\frac13 d \rt\|_{X_{a,m,\epsilon}} \rt) .
            \end{align*}
	\end{Lem}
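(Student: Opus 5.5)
The plan is to reduce the estimate to the two trilinear estimates already available, Proposition \ref{prop:est of f} (estimate \eqref{eq:uf-estimate}) and Lemma \ref{lem:est of dx u}. First distribute $\p^2$ over the product with the Leibniz rule:
\[
\p^2(u\cdot\nabla d)=u\cdot\nabla \p^2 d+2\,\p u\cdot\nabla\p d+\p^2 u\cdot\nabla d .
\]
On the Fourier side the multiplier $|D_x|^{\frac23}$ is split as $|k|^{\frac13}|k|^{\frac13}$. Exactly as in the proof of Proposition \ref{lem:est of |D_x|13 d}, one factor $|k|^{\frac13}$ is bounded by $|k-l|^{\frac13}+|l|^{\frac13}$ and moved either onto $d$ or onto $u$. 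This produces terms of the form $(|D_x|^{\frac13}\cdot)\cdot\nabla(\cdot)$ paired against $\mathcal M e^{2aA^{-1/3}|D_x|^{2/3}t}\langle D_x\rangle^{2m}\langle 1/D_x\rangle^{2\epsilon}|D_x|^{\frac13}\p^2 d$.

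\textbf{Step 1 (transport term).} In $u\cdot\nabla \p^2 d$ with the $|k|^{1/3}$ placed on $d$, I apply \eqref{eq:uf-estimate} with $f=\p^2|D_x|^{\frac13}d$. This gives $A^{-1/2}\|\omega\|_X\|(\px^2,\py^2)|D_x|^{1/3}d\|_X^2$. If the $|l|^{1/3}$ falls on $u$, I use \eqref{eq:dx13 uf-estimate} with $f=\p^2 d$ instead. Both bounds are better than the claimed $A^{-1/3}$. Here $\px\py$ is controlled by $\px^2$ and $\py^2$ via Plancherel.

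\textbf{Step 2 (lower-order terms).} For $\p u\cdot\nabla\p d$ and $\p^2u\cdot\nabla d$ I cannot invoke the lemmas directly, because derivatives fall on $u$. Instead I write $u_l=(\py\Delta_l^{-1}\omega_l,\,-il\Delta_l^{-1}\omega_l)$ and note that
\[
\p u=\nabla^\perp\Delta^{-1}\p\omega, \qquad \p^2 u=\nabla^\perp\p\Delta^{-1}\p\omega .
\]
The symbol $\nabla^\perp\p\Delta^{-1}$ is bounded, so $\|\p^2 u_l\|_{L^2_y}\lesssim\|\omega_l\|_{L^2_y}$, and $\|\p u_l\|_{L^\infty_y}$ is controlled through \eqref{eq:GN1} by $\omega_l$ and $\nabla_l\omega_l$.

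I then split the $(k,l)$ integral into $\mathcal R_{\rm res}$, $\mathcal R_{\rm HL}$ and $\mathcal R_{\rm LH}$, using \eqref{eq:trick1}--\eqref{eq:trick3.5} as in Lemma \ref{lem:est of dx13 nabla d2}. The factor with the $L^\infty_y$ norm is put in $L^1_l$, using Corollary \ref{cor:L infty} or \eqref{eq:GN1} for $\nabla d$ or $u$. The $L^2$ factors are measured in $Y_{m,\epsilon}$.

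\textbf{Step 3 (time integration).} Powers of $A$ come from the $X$-norm: $A^{-1/2}$ for each $L^2_t$ gradient factor, and $A^{-1/6}$ for each $|D_x|^{1/3}$ gain in $L^2_t$. A typical outcome for $\p^2u\cdot\nabla d$ is
\[
\frac1A\|\omega\|_{L^2_tY}\,\|\nabla d\|_{L^\infty}\,\|\p^2|D_x|^{\frac23}d\|_{L^2_tY}.
\]
For the last factor I use the enhanced-dissipation piece, which costs $A^{1/6}$. For $\nabla d$, Corollary \ref{cor:L infty} would bring back the top norm in $L^2_t$ via its $\py$ part, costing $A^{1/2}$ relative to the $1/A$ prefactor.

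\textbf{Main obstacle.} The delicate point is that $\omega$ itself carries only $\|\omega\|_{L^\infty_tY}$ plus $A^{-1/2}\|\nabla\omega\|_{L^2_tY}$, and there is no $L^2_t$ bound on $\omega$ without derivatives. I would therefore use the $\|\partial_x\nabla\Delta^{-1}\omega\|_{L^2L^2}$ component of $X$ (inviscid damping). The $\p u$ factors are expressed through $\partial_x\nabla\Delta^{-1}\omega$ when $i=1$, and through $\py\nabla^\perp\Delta^{-1}\omega$ when $i=2$. The latter is handled by interpolating $\|\nabla\omega\|$, which costs $A^{1/2}$ against the $1/A$ prefactor, with the $|D_x|^{1/3}d$ dissipation. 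Balancing these exponents should give $A^{-1/3}$. The worst case is the $i=2$, $\mathcal R_{\rm LH}$ configuration, where the low-frequency weight $\langle 1/k\rangle^{\epsilon}$ must be absorbed using $\epsilon>\frac16$.
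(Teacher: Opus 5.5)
Your Step 1 is fine. After writing $|k|^{\frac23}=|k|^{\frac13}|k|^{\frac13}$ and using $|k|^{\frac13}\le|k-l|^{\frac13}+|l|^{\frac13}$, the term $u\cdot\nabla\p^2 d$ is covered by \eqref{eq:uf-estimate} with $f=\p^2|D_x|^{\frac13}d$ and by \eqref{eq:dx13 uf-estimate} with $f=\p^2 d$. The gap is in Steps 2--3, where the real content of the lemma lies.

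\textbf{The derivative count on $u$ is wrong.} The assertion $\|\p^2u_l\|_{L^2_y}\lesssim\|\omega_l\|_{L^2_y}$ is false. In $\p^2u=\nabla^\perp\p\Delta^{-1}(\p\omega)$ the bounded operator acts on $\p\omega$, so $\|\p^2u_l\|_{L^2_y}\lesssim\|\q\omega_l\|_{L^2_y}$: the term $\p^2u$ sits one derivative above $\omega$. For $i=2$, the $X_{a,m,\epsilon}$-norm controls $\py^2u$ only through $A^{-\frac12}\|\nabla\omega\|_{L^2L^2}$. The inviscid-damping piece does not help here. It also does not help for the component $\py^2\Delta^{-1}\omega$ of $\py u$, since neither is of the form $\px\nabla\Delta^{-1}\omega$.

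\textbf{The balancing is never carried out.} Your displayed typical bound uses $\|\omega\|_{L^2_tY_{m,\epsilon}}$, which the $X$-norm does not provide, as you note yourself. The final paragraph then replaces the estimate by ``balancing these exponents should give $A^{-1/3}$''. It never says, region by region, which factor goes in $L^\infty_y$ and which in $L^1$ of its frequency variable, or where the two $L^2_t$ factors come from. It also does not say how the low-frequency weight on $\nabla d_{k-l}$ is met: that weight needs a factor $|k-l|^{\frac13}$, while you want all of $|k|^{\frac23}$ on the test factor to gain $L^2_t$. So the terms $2\p u\cdot\nabla\p d$ and $\p^2u\cdot\nabla d$ are not actually estimated.

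\textbf{How the paper avoids this.} The paper integrates by parts once, writing $\bigl(\p^2(u\cdot\nabla d)\,\big|\,\cdots\p^2|D_x|^{\frac23}d\bigr)=-\bigl(\p(u\cdot\nabla d)\,\big|\,\cdots\p^3|D_x|^{\frac23}d\bigr)$. Expanding $\p(u\cdot\nabla d)=\p u\cdot\nabla d+u\cdot\nabla\p d$ gives $\Phi_1,\dots,\Phi_4$, and in each of these $u$ carries at most one derivative.
\begin{itemize}
\item The factors $\q\py\Delta_l^{-1}\omega_l$ and $\py\Delta_l^{-1}\omega_l$ are bounded by $\omega_l$, either in $L^\infty_t$ or with $|l|^{\frac13}$ in $L^2_t$. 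Negative powers of $|l|$ are absorbed in $L^1_l$; the worst is $|l|^{-\frac23}$, which is where $\epsilon>\frac16$ enters.
\item The factors $l\Delta_l^{-1}\omega_l$ and $\q(l\Delta_l^{-1}\omega_l)$ are bounded by the free $L^2_t$ quantity $\px\nabla\Delta^{-1}\omega$.
\item The top-order test factor $|k|^{\frac13}\q^3d_k$ is always paid for by $A^{-\frac12}\|\nabla(\nabla^2|D_x|^{\frac13}d)\|_{L^2L^2}$.
\end{itemize}
At most one further $|D_x|^{\frac13}$ gain is needed, so every term costs at most $A^{-1}A^{\frac12}A^{\frac16}=A^{-\frac13}$.

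\textbf{Repairing your route.} If you prefer to keep your Leibniz splitting, it can be completed with the correct counting.
\begin{itemize}
\item For $\p^2u\cdot\nabla d$, pay $\|\nabla\omega\|_{L^2_t}$ (cost $A^{\frac12}$) and $\||D_x|^{\frac13}\p^2|D_x|^{\frac13}d\|_{L^2_t}$ (cost $A^{\frac16}$). Keep $\nabla d_{k-l}$ in $L^\infty_tL^\infty_y$ via \eqref{eq:GN1}. In $\mathcal R_{\rm res}$, obtain the needed $|k-l|^{\frac13}$ from $1\lesssim|k-l|^{\frac13}|l|^{-\frac13}$, putting $|l|^{-\frac13}$ on $\omega_l$ in $L^1_l$. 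In the other two regions, obtain it from the $L^1$ weight and from \eqref{eq:trick3.5}.
\item For $\p u\cdot\nabla\p d$, apply \eqref{eq:GN1} to $\p u_l$ to get $\|\omega_l\|_{L^2_y}^{\frac12}\|\py\omega_l\|_{L^2_y}^{\frac12}\in L^4_t$ at cost $A^{\frac14}$. Combine this with fractional $|D_x|$ gains on the two $d$ factors.
\end{itemize}
These are precisely the steps your proposal leaves out.
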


	\begin{proof}[\bf Proof of Proposition \ref{lem:est of pypy dx13 d}.]
		For $i=1,2$, one applies $\p^2$ to $\eqref{eq:main}_3$ and has that
		\begin{equation}\begin{aligned} \label{eq:pypy d}
				\partial_t \p^2  d +  y \partial_x \p^2  d -  \frac{1}{A} \Delta \p^2 d= -\frac1A \p^2 (u\cdot \nabla d)+  \frac1A   \p^2  \lt(|\nabla d|^2 (d + e_1)\rt)    -  2(i-1)\px\py d.
		\end{aligned}\end{equation}
		By taking the inner product of \eqref{eq:pypy d} with $2 \mathcal{M} e^{2 a A^{-\frac{1}{3}}\left|D_x\right|^{\frac{2}{3}}  t } \langle D_x \rangle^{2 m} \langle\frac{1}{D_x} \rangle^{2 \epsilon} \p^2 |D_x|^\frac23 d$, and using \eqref{eq:bound of M}, \eqref{eq:crucial M} and \eqref{eq:m}, for $0<a<\frac{1}{16(1+2 \pi)}$ we have
		\begin{align*}
			&\quad \frac{d}{d t}\lt\|\sqrt{\mathcal{M}} e^{a A^{-\frac{1}{3}}\left|D_x\right|^{\frac{2}{3} }t}   \p^2 |D_x|^\frac13 d\rt\|_{Y_{m,\epsilon}}^2   +    \frac1A \lt\|e^{a A^{-\frac{1}{3}}\left|D_x\right|^{\frac{2}{3} }t}   \nabla \p^2 |D_x|^\frac13 d\rt\|_{Y_{m,\epsilon}}^2 \\
			& \quad\quad +\frac{1}{16 A^{\frac{1}{3}}}\lt\|e^{a A^{-\frac{1}{3}}\left|D_x\right|^{\frac{2}{3} } t }  \left|D_x\right|^{\frac{1}{3}} \p^2 |D_x|^\frac13 d\rt\|_{Y_{m,\epsilon}}^2  +  \lt\|e^{a A^{-\frac{1}{3}}\left|D_x\right|^{\frac{2}{3} }t }   \partial_x \nabla  \Delta^{-1} \p^2 |D_x|^\frac13 d  \rt\|_{L^2}^2 \\
			&\lesssim \frac2A  \left| \Re\left( \p^2   \lt( u \cdot \nabla d\rt) \left\lvert\, \mathcal{M} e^{2 a A^{-\frac{1}{3}}\left|D_x\right|^{\frac{2}{3} } t}\langle D_x\rangle^{2 m}\langle\frac{1}{D_x}\rangle^{2 \epsilon} \p^2  |D_x|^\frac23 d\right.\right)  \right|\\
			&\quad + \frac2A  \left| \Re\left( \p^2   \lt(|\nabla d|^2 (d + e_1)\rt) \left\lvert\, \mathcal{M} e^{2 a A^{-\frac{1}{3}}\left|D_x\right|^{\frac{2}{3} } t}\langle D_x\rangle^{2 m}\langle\frac{1}{D_x}\rangle^{2 \epsilon} \p^2  |D_x|^\frac23 d\right.\right)  \right|\\
			&\quad + 4(i-1)\left|   \Re\left( \px\py d \left\lvert\, \mathcal{M} e^{2 a A^{-\frac{1}{3}}\left|D_x\right|^{\frac{2}{3} } t}\langle D_x\rangle^{2 m}\langle\frac{1}{D_x}\rangle^{2 \epsilon} \p^2 |D_x|^\frac23 d \right.\right) \right| .
		\end{align*}
		Notice that when $i=2$, 
		\begin{align*}
			&  \quad \left|   \Re\left( \px\py d \left\lvert\, \mathcal{M} e^{2 a A^{-\frac{1}{3}}\left|D_x\right|^{\frac{2}{3} } t}\langle D_x\rangle^{2 m}\langle\frac{1}{D_x}\rangle^{2 \epsilon} \py^2 |D_x|^\frac23 d \right.\right) \right|  \\
			&\lesssim \lt\|e^{a A^{-\frac{1}{3}}\left|D_x\right|^{\frac{2}{3} } t }   \nabla |D_x|^\frac13 d\rt\|_{Y_{m,\epsilon}}\lt \|e^{a A^{-\frac{1}{3}}\left|D_x\right|^{\frac{2}{3} } t }   \nabla \px  \py |D_x|^\frac13 d\rt\|_{Y_{m,\epsilon}} 
		\end{align*}
		implies 
		\begin{align*}
			&   \int_{0}^{t}\left|   \Re\left( \px \py d \left\lvert\, \mathcal{M} e^{2 a A^{-\frac{1}{3}}\left|D_x\right|^{\frac{2}{3} } t}\langle D_x\rangle^{2 m}\langle\frac{1}{D_x}\rangle^{2 \epsilon} \py^2 |D_x|^\frac23 d \right.\right) \right| dt \\
			&\lesssim A  \lt\|     |D_x|^\frac13 d\rt\|_{X_{a,m,\epsilon}} \lt\|    \px\py |D_x|^\frac13 d\rt\|_{X_{a,m,\epsilon}} ,
		\end{align*}
		which, along with Lemmas \ref{lem:est of pypy nabla d2 d}, \ref{lem:est of pypy nabla d2 e1} and \ref{lem:est of py^2 u}, completes the proof.
	\end{proof}

	In the remainder of this section, we provide the proofs of the three lemmas one by one.

	\begin{proof}[\bf Proof of Lemma \ref{lem:est of pypy nabla d2 d}.]
		Using integration by parts and the Fourier transform, we have
		\begin{align*} 
			& \quad   \frac1A \left|\Re\left( \p^2  \left(  |\nabla d|^2 d \right) \left\lvert\, \mathcal{M} e^{2 a A^{-\frac{1}{3}}\left|D_x\right|^{\frac{2}{3} }t }\langle D_x\rangle^{2 m  }\langle\frac{1}{D_x}\rangle^{2 \epsilon} \p^2 |D_x|^\frac23  d\right.\right)\right|\\
			&\lesssim \frac1A \int_{\mathbb{R}^3} e^{2 a A^{-\frac{1}{3}}|k|^{\frac{2}{3} } t }\langle k\rangle^{2 m   }\langle\frac{1}{k}\rangle^{2 \epsilon} |k|^\frac23 \\
			&\qqquad \quad \times \left|\int_{\mathbb{R}} \mathcal{M}\left(k, D_y\right)   \q^3 d_k(y) \cdot   \q d_l(y)     \nabla_{k-l-\eta} d_{k-l-\eta}(y) \cdot \nabla_\eta d_{\eta}(y)   d y\right| d k d ld\eta\\
			&\quad + \frac1A \int_{\mathbb{R}^3} e^{2 a A^{-\frac{1}{3}}|k|^{\frac{2}{3} } t }\langle k\rangle^{2 m   }\langle\frac{1}{k}\rangle^{2 \epsilon} |k|^\frac23 \\
			&\qqquad \quad \times \left|\int_{\mathbb{R}} \mathcal{M}\left(k, D_y\right)   \q^3 d_k(y) \cdot    d_l(y)    \nabla_{k-l-\eta} d_{k-l-\eta}(y) \cdot \nabla_\eta \q d_{\eta}(y)   d y\right| d k d ld\eta \\
            &=: II_1+II_2. \alabel{eq:nabla^2 d}
		\end{align*}

		$\bullet$~\textbf{In $\mathcal{R}_{\text{res}}(k,l)\times\mathbb{R}$.} With help  of \eqref{eq:trick1} and Corollary \ref{cor:L infty} ($\alpha=\beta=\frac13$), for the term of $II_1$ we arrive at
		\begin{align*}  
			&\quad \int_{\mathcal{R}_{\text{res}(k,l)}\times\mathbb{R}} e^{2 a A^{-\frac{1}{3}}|k|^{\frac{2}{3} } t }\langle k\rangle^{2 m   }\langle\frac{1}{k}\rangle^{2 \epsilon} |k|^\frac23 \\
			&\qqquad \qqquad \qquad \times \left|\int_{\mathbb{R}} \mathcal{M}\left(k, D_y\right)  \q^3 d_k(y) \cdot   \q d_l(y)    \nabla_{k-l-\eta}   d_{k-l-\eta}(y) \cdot \nabla_{\eta}  d_{\eta}(y)   d y\right| d k d ld\eta \\
			&\lesssim \lt\|e^{a A^{-\frac{1}{3}}|k|^{\frac{2}{3} }t}\langle k\rangle^{m  }\langle\frac{1}{k}\rangle^\epsilon |k|^\frac13   \lt\|\q^3 d_k\rt\|_{L_y^2}\rt\|_{L_k^2}     \lt\|e^{a A^{-\frac{1}{3}}|l|^{\frac{2}{3} } t } \lt \| \q d_l \rt\|_{L_y^{\infty}}\rt\|_{L_l^1} \\
			&\quad \times\lt\|e^{a A^{-\frac{1}{3}}|k-l|^{\frac{2}{3} } t }\langle k-l\rangle^{m   } \langle\frac{1}{k-l}\rangle^\epsilon |k-l|^\frac13 \int_{\mathbb{R}}   \lt\| \nabla_{k-l-\eta} d_{k-l-\eta} \cdot \nabla_{\eta} d_{\eta}\rt\|_{L_y^2} d\eta \rt\|_{L_{k-l}^2} \\
            &\lesssim\lt \|e^{a A^{-\frac{1}{3}}\left|D_x\right|^{\frac{2}{3} } t }    \nabla \lt(\nabla^2 |D_x|^\frac13 d\rt) \rt\|_{Y_{m,\epsilon}}  \lt\|e^{a A^{-\frac{1}{3}}\left|D_x\right|^{\frac{2}{3} }t}    \nabla|D_x|^\frac13 d\rt\|_{Y_{m,\epsilon}}^\frac12 \lt\|e^{a A^{-\frac{1}{3}}\left|D_x\right|^{\frac{2}{3} }t}    \py \nabla |D_x|^\frac13 d\rt\|_{Y_{m,\epsilon}}^\frac12      \\
			&\quad \times\lt\|e^{a A^{-\frac{1}{3}}|k-l|^{\frac{2}{3} } t }\langle k-l\rangle^{m   } \langle\frac{1}{k-l}\rangle^\epsilon |k-l|^\frac13 \int_{\mathbb{R}}   \lt\| \nabla_{k-l-\eta} d_{k-l-\eta} \rt\|_{L_y^2}   \lt\| \nabla_{\eta} d_{\eta}\rt\|_{L_y^\infty} d\eta \rt\|_{L_{k-l}^2}
            \end{align*}
and using Lemma \ref{lem:Preliminary estimates} ($i=1$, $f=g=\nabla d$), it can be controlled by
            \begin{align*}
			&C\lt \|e^{a A^{-\frac{1}{3}}\left|D_x\right|^{\frac{2}{3} } t }    \nabla \lt(\nabla^2 |D_x|^\frac13 d\rt) \rt\|_{Y_{m,\epsilon}}  \lt\|e^{a A^{-\frac{1}{3}}\left|D_x\right|^{\frac{2}{3} }t}    \nabla|D_x|^\frac13 d\rt\|_{Y_{m,\epsilon}}^\frac32 \lt\|e^{a A^{-\frac{1}{3}}\left|D_x\right|^{\frac{2}{3} }t}    \nabla^2 |D_x|^\frac13 d\rt\|_{Y_{m,\epsilon}}^\frac12      \\
			&\quad\times\lt\|  e^{a A^{-\frac{1}{3}}|D_x|^{\frac{2}{3} } t }     |D_x|^{\frac13} \nabla |D_x|^\frac13 d    \rt\|_{Y_{m,\epsilon}}^\frac12\lt\|  e^{a A^{-\frac{1}{3}}|D_x|^{\frac{2}{3} } t }     \nabla \lt(\nabla |D_x|^\frac13 d\rt)   \rt \|_{Y_{m,\epsilon}}^\frac12. \alabel{eq:est of |px d|^2 py d 1}  
		\end{align*}
		Similarly,  for the term of $II_2$, using \eqref{eq:trick1} and Corollary \ref{cor:L infty} ($\alpha=\beta=\frac13$) again, we have
		\begin{align*}  
			&\quad \int_{\mathcal{R}_{\text{res}(k,l)}\times\mathbb{R}} e^{2 a A^{-\frac{1}{3}}|k|^{\frac{2}{3} } t }\langle k\rangle^{2 m   }\langle\frac{1}{k}\rangle^{2 \epsilon} |k|^\frac23 \\
			&\qqquad \qqquad \qquad \times \left|\int_{\mathbb{R}} \mathcal{M}\left(k, D_y\right)  \q^3 d_k(y) \cdot    d_l(y)  \nabla_{k-l-\eta} d_{k-l-\eta}(y) \cdot \nabla_\eta \q d_{\eta}(y)   d y\right| d k d ld\eta \\
			&\lesssim\lt \|e^{a A^{-\frac{1}{3}}|k|^{\frac{2}{3} }t}\langle k\rangle^{m  }\langle\frac{1}{k}\rangle^\epsilon |k|^\frac13  \lt \|\q^3 d_k\rt\|_{L_y^2}\rt\|_{L_k^2}     \lt\|e^{a A^{-\frac{1}{3}}|l|^{\frac{2}{3} } t }\lt \|   d_l \rt\|_{L_y^{\infty}}\rt\|_{L_l^1} \\
			&\quad \times\lt\|e^{a A^{-\frac{1}{3}}|k-l|^{\frac{2}{3} } t }\langle k-l\rangle^{m   } \langle\frac{1}{k-l}\rangle^\epsilon |k-l|^\frac13 \int_{\mathbb{R}}  \lt\| \nabla_{k-l-\eta} d_{k-l-\eta} \cdot \nabla_{\eta} \q d_{\eta}\rt\|_{L_y^2} d\eta \rt\|_{L_{k-l}^2}\\ 
            &\lesssim \lt\|  e^{a A^{-\frac{1}{3}}|D_x|^{\frac{2}{3} } t }      \nabla\lt(\nabla^2|D_x|^{\frac13} d\rt)   \rt \|_{Y_{m,\epsilon}}\lt\|e^{a A^{-\frac{1}{3}}\left|D_x\right|^{\frac{2}{3} }t}      |D_x|^\frac13 d\rt\|_{Y_{m,\epsilon}}^\frac12 \lt\|e^{a A^{-\frac{1}{3}}\left|D_x\right|^{\frac{2}{3} }t}    \nabla |D_x|^\frac13 d\rt\|_{Y_{m,\epsilon}}^\frac12      \\
			&\quad \times\lt\|e^{a A^{-\frac{1}{3}}|k-l|^{\frac{2}{3} } t }\langle k-l\rangle^{m   } \langle\frac{1}{k-l}\rangle^\epsilon |k-l|^\frac13 \int_{\mathbb{R}}  \lt\| \nabla_{k-l-\eta} d_{k-l-\eta}\rt\|_{L_y^2}  \lt\| \nabla_{\eta} \q d_{\eta}\rt\|_{L_y^\infty} d\eta \rt\|_{L_{k-l}^2}
\end{align*}
and using Lemma \ref{lem:Preliminary estimates} ($i=1$, $f=\nabla d$, $g=\nabla^2 d$) it can be bounded by
            \begin{align*}
			&C \lt\|  e^{a A^{-\frac{1}{3}}|D_x|^{\frac{2}{3} } t }      \nabla\lt(\nabla^2|D_x|^{\frac13} d\rt)   \rt \|_{Y_{m,\epsilon}}^{\frac32}\lt\|e^{a A^{-\frac{1}{3}}\left|D_x\right|^{\frac{2}{3} }t}      |D_x|^\frac13 d\rt\|_{Y_{m,\epsilon}}^\frac12 \lt\|e^{a A^{-\frac{1}{3}}\left|D_x\right|^{\frac{2}{3} }t}    \nabla |D_x|^\frac13 d\rt\|_{Y_{m,\epsilon}}^\frac32      \\
			&\quad\times \lt\|  e^{a A^{-\frac{1}{3}}|D_x|^{\frac{2}{3} } t }      |D_x|^{\frac13}\nabla^2 |D_x|^{\frac13} d    \rt\|_{Y_{m,\epsilon}}^{\frac12}.\alabel{eq:est of px d px py d d 1}   
		\end{align*}

		$\bullet$~\textbf{In $\mathcal{R}_{\text{HL}}(k,l)\times\mathbb{R}$.}
		By \eqref{eq:GN1} and \eqref{eq:trick2}, on one hand, we have
		\begin{align*}   
			&\quad\int_{\mathcal{R}_{\text{HL}}(k,l)\times\mathbb{R}}   e^{2 a A^{-\frac{1}{3}}|k|^{\frac{2}{3} } t }\langle k\rangle^{2 m   }\langle\frac{1}{k}\rangle^{2 \epsilon} |k|^\frac23 \\
				&\qqquad \qqquad \qquad \times \left|\int_{\mathbb{R}} \mathcal{M}\left(k, D_y\right)  \q^3 d_k(y) \cdot   \q d_l(y)    \nabla_{k-l-\eta}   d_{k-l-\eta}(y) \cdot \nabla_{\eta}  d_{\eta}(y)   d y\right| d k d ld\eta \\
			&\lesssim  \|e^{a A^{-\frac{1}{3}}|k|^{\frac{2}{3} }t}\langle k\rangle^{m   }\langle\frac{1}{k}\rangle^\epsilon |k|^\frac13 \|\q^3  d_k\|_{L_y^2}\|_{L_k^2}     \|e^{a A^{-\frac{1}{3}}|l|^{\frac{2}{3} } t } \langle l\rangle^{m   }\langle\frac{1}{l}\rangle^\epsilon |l|^\frac12 \| \q d_l \|_{L_y^{\infty}}\|_{L_l^2} \\
			&\quad \times\|e^{a A^{-\frac{1}{3}}|k-l|^{\frac{2}{3} } t }\langle k-l\rangle^{m   } \langle\frac{1}{k-l}\rangle^\epsilon    \int_{\mathbb{R}} \| \nabla_{k-l-\eta} d_{k-l-\eta} \|_{L_y^2} \| \nabla_{\eta} d_{\eta}\|_{L_y^\infty} d\eta \|_{L_{k-l}^2} \\
			&\quad\times\| |k-l|^{-\frac16} \langle k-l\rangle^{-m}\langle\frac{1}{k-l}\rangle^{-\epsilon}\|_{L_{k-l}^2}\\
			&\lesssim  \lt\|e^{a A^{-\frac{1}{3}}\left|D_x\right|^{\frac{2}{3} } t }    \nabla (\nabla^2 |D_x|^\frac13 d )\rt\|_{Y_{m,\epsilon}}  \lt\|e^{a A^{-\frac{1}{3}}\left|D_x\right|^{\frac{2}{3} }t}    |D_x|^\frac13 \nabla |D_x|^\frac13 d\rt|_{Y_{m,\epsilon}}^\frac12 \lt\|e^{a A^{-\frac{1}{3}}\left|D_x\right|^{\frac{2}{3} }t}    \nabla^2|D_x|^\frac13 d\rt\|_{Y_{m,\epsilon}}^\frac12     \\
			&\quad \times   \lt\|  e^{a A^{-\frac{1}{3}}|D_x|^{\frac{2}{3} } t }      \nabla|D_x|^{\frac13}d    \rt\|_{Y_{m,\epsilon}}^{\frac32}\lt\|  e^{a A^{-\frac{1}{3}}|D_x|^{\frac{2}{3} } t }     \nabla(\nabla|D_x|^{\frac13} d )   \rt\|_{Y_{m,\epsilon} }^\frac12,
			\\
		  \alabel{eq:est of |px d|^2 py d 2}
		\end{align*}
        where we applied Lemma \ref{lem:Preliminary estimates} ($i=0$, $f=g=\nabla d$) to the second line.
		On the other hand, for the term of $II_2$, using Lemma \ref{lem:Preliminary estimates} ($i=0$, $f=\nabla d$ and $g=\nabla^2 d$) we obtain
		\begin{align*}   
			&\quad\int_{\mathcal{R}_{\text{HL}}(k,l)\times\mathbb{R}}   e^{2 a A^{-\frac{1}{3}}|k|^{\frac{2}{3} } t }\langle k\rangle^{2 m   }\langle\frac{1}{k}\rangle^{2 \epsilon} |k|^\frac23 \\
			&\qqquad \qqquad \qquad \times \left|\int_{\mathbb{R}} \mathcal{M}\left(k, D_y\right)  \q^3 d_k(y) \cdot    d_l(y)  \nabla_{k-l-\eta} d_{k-l-\eta}(y) \cdot \nabla_\eta \q d_{\eta}(y)   d y\right| d k d ld\eta \\
			&\lesssim \lt \|e^{a A^{-\frac{1}{3}}|k|^{\frac{2}{3} }t}\langle k\rangle^{m   }\langle\frac{1}{k}\rangle^\epsilon |k|^\frac13 \lt\|\q^3 d_k\rt\|_{L_y^2}\rt\|_{L_k^2}   \lt  \|e^{a A^{-\frac{1}{3}}|l|^{\frac{2}{3} } t } \langle l\rangle^{m   }\langle\frac{1}{l}\rangle^\epsilon |l|^\frac12\lt \|  d_l \rt\|_{L_y^{\infty}}\rt\|_{L_l^2} \\
			&\quad \times\lt\|e^{a A^{-\frac{1}{3}}|k-l|^{\frac{2}{3} } t }\langle k-l\rangle^{m   } \langle\frac{1}{k-l}\rangle^\epsilon  \int_{\mathbb{R}}  \lt\| \nabla_{k-l-\eta} d_{k-l-\eta} \rt\|_{L_y^2}  \lt\| \nabla_{\eta} \q d_{\eta}\rt\|_{L_y^\infty} d\eta\rt \|_{L_{k-l}^2} \\
			&\quad\times\lt \| |k-l|^{-\frac16}\langle k-l\rangle^{-m}\langle\frac{1}{k-l}\rangle^{-\epsilon}\rt\|_{L_{k-l}^2}\\
			&\lesssim \lt\|e^{a A^{-\frac{1}{3}}\left|D_x\right|^{\frac{2}{3} } t }    \nabla \lt(\nabla^2 |D_x|^\frac13 d\rt )\rt\|_{Y_{m,\epsilon}}^{\frac32}  \lt\|e^{a A^{-\frac{1}{3}}\left|D_x\right|^{\frac{2}{3} }t}   |D_x|^\frac23   d \rt\|_{Y_{m,\epsilon}}^\frac12\lt\|e^{a A^{-\frac{1}{3}}\left|D_x\right|^{\frac{2}{3} }t}    \nabla |D_x|^\frac13 d\rt\|_{Y_{m,\epsilon}}^\frac32      \\
			&\quad\times\lt \|  e^{a A^{-\frac{1}{3}}|D_x|^{\frac{2}{3} } t }     \nabla^2 |D_x|^\frac13 d    \rt\|_{Y_{m,\epsilon}}^\frac12.
		 \alabel{eq:est of py d py^2 d d 2}
		\end{align*}
		
		$\bullet$~~\textbf{In $\mathcal{R}_{\text{LH}}\times\mathbb{R}$.}
		In virtue of \eqref{eq:trick3} and \eqref{eq:trick3.5}, we have
		\begin{align*}  
			&\quad\int_{\mathcal{R}_{\text{LH}}(k,l)\times\mathbb{R}} e^{2 a A^{-\frac{1}{3}}|k|^{\frac{2}{3} } t }\langle k\rangle^{2 m   }\langle\frac{1}{k}\rangle^{2 \epsilon} |k|^\frac23 \\
		&\qqquad \qqquad \qquad \times \left|\int_{\mathbb{R}} \mathcal{M}\left(k, D_y\right)  \q^3 d_k(y) \cdot   \q d_l(y)    \nabla_{k-l-\eta}   d_{k-l-\eta}(y) \cdot \nabla_{\eta}  d_{\eta}(y)   d y\right| d k d ld\eta \\
			&\lesssim \lt \|e^{a A^{-\frac{1}{3}}|k|^{\frac{2}{3} }t}\langle k\rangle^{m   }\langle\frac{1}{k}\rangle^\epsilon |k|^\frac13 \lt\|\q^3  d_k\rt\|_{L_y^2}\rt\|_{L_k^2}   \lt  \|e^{a A^{-\frac{1}{3}}|l|^{\frac{2}{3} } t } \langle l\rangle^{m   }\langle\frac{1}{l}\rangle^\epsilon |l|^\frac12 \lt\| \q d_l \rt\|_{L_y^{\infty}}\rt\|_{L_l^2} \\
		&\quad \times\lt\|e^{a A^{-\frac{1}{3}}|k-l|^{\frac{2}{3} } t }\langle k-l\rangle^{m   } \langle\frac{1}{k-l}\rangle^\epsilon    \int_{\mathbb{R}} \lt\| \nabla_{k-l-\eta} d_{k-l-\eta} \cdot \nabla_{\eta} d_{\eta}\rt\|_{L_y^2} d\eta \rt\|_{L_{k-l}^2} \\
			&\quad\times
			\lt\|  \left(\langle\frac{1}{k}\rangle^\epsilon  +\langle\frac{1}{k}\rangle^{\frac16  -\epsilon}  \right)\langle k\rangle^{-m}\rt\|_{L_k^2}
		 \alabel{eq:est of |px d|^2 py d 3}
		\end{align*}
		and
		\begin{align*}  
			&\quad\int_{\mathcal{R}_{\text{LH}}(k,l)\times\mathbb{R}} e^{2 a A^{-\frac{1}{3}}|k|^{\frac{2}{3} } t }\langle k\rangle^{2 m   }\langle\frac{1}{k}\rangle^{2 \epsilon} |k|^\frac23 \\
				&\qqquad \qqquad \qquad \times \left|\int_{\mathbb{R}} \mathcal{M}\left(k, D_y\right)  \q^3 d_k(y) \cdot    d_l(y)  \nabla_{k-l-\eta} d_{k-l-\eta}(y) \cdot \nabla_\eta \q d_{\eta}(y)   d y\right| d k d ld\eta \\
			& \lesssim \lt\|e^{a A^{-\frac{1}{3}}|k|^{\frac{2}{3} } t }\langle k\rangle^{m }\langle\frac{1}{k}\rangle^\epsilon  |k|^\frac13 \|\py^3 d_k\|_{L_y^2}\rt\|_{L_k^2}  \lt\|e^{a A^{-\frac{1}{3}}|l|^{\frac{2}{3} } t }\langle l\rangle^{m   }\langle\frac{1}{l}\rangle^\epsilon  |l|^{\frac12}  \| d_l\|_{L_y^\infty}\rt\|_{L_l^2} \\
			& \quad \times   \lt\|e^{a A^{-\frac{1}{3}}|k-l|^{\frac{2}{3} } t }\langle k-l\rangle^{m   } \langle\frac{1}{k-l}\rangle^\epsilon    \int_{\mathbb{R}}  \| \nabla_{k-l-\eta} d_{k-l-\eta}\|_{L_y^2} \|\nabla_\eta \q d_{\eta}\|_{L_y^\infty} d\eta \rt\|_{L_{k-l}^2} 
			\\
            &\quad \times \lt\|  \left(\langle\frac{1}{k}\rangle^\epsilon  +\langle\frac{1}{k}\rangle^{\frac16 -\epsilon}  \right)\langle k\rangle^{-m}\rt\|_{L_k^2}.
\alabel{eq:est of py d py^2 d d 3 1}
		\end{align*}
        Then, using \eqref{eq:GN1} and Lemma \ref{lem:Preliminary estimates}, \eqref{eq:est of |px d|^2 py d 3} and \eqref{eq:est of py d py^2 d d 3 1} are controlled by the last term of \eqref{eq:est of |px d|^2 py d 2} and \eqref{eq:est of py d py^2 d d 2}, respectively.

	Collecting \eqref{eq:est of |px d|^2 py d 1}--\eqref{eq:est of py d py^2 d d 2}, we arrive
		\begin{align*} 
		& \quad   II_1+II_2 \\ 
	&\lesssim \frac{1}{A}\sum_{j\in\{0,1\}}\lt\|e^{a A^{-\frac{1}{3}}\left|D_x\right|^{\frac{2}{3} } t }    \nabla \lt(\nabla^2 |D_x|^\frac13 d\rt)\rt \|_{Y_{m,\epsilon}}  \lt\|e^{a A^{-\frac{1}{3}}\left|D_x\right|^{\frac{2}{3} }t}    |D_x|^{\frac{j}{3}}\nabla |D_x|^\frac13 d\rt\|_{Y_{m,\epsilon}}^\frac12 \\&\quad\times\lt\|e^{a A^{-\frac{1}{3}}\left|D_x\right|^{\frac{2}{3} }t}    \nabla^2 |D_x|^\frac13 d\rt\|_{Y_{m,\epsilon}}^\frac12      
\lt\|  e^{a A^{-\frac{1}{3}}|D_x|^{\frac{2}{3} } t }     \nabla |D_x|^\frac13 d    \rt\|_{Y_{m,\epsilon}}\lt\|  e^{a A^{-\frac{1}{3}}|D_x|^{\frac{2}{3} } t }     |D_x|^{\frac{1-j}{3}} \nabla |D_x|^\frac13 d    \rt\|_{Y_{m,\epsilon}}^\frac12\\&\qquad\times\lt\|  e^{a A^{-\frac{1}{3}}|D_x|^{\frac{2}{3} } t }     \nabla\lt(\nabla|D_x|^\frac13 d \rt)   \rt\|_{Y_{m,\epsilon}}^\frac12\\
&\quad+\frac{1}{A}\sum_{j\in\{0,1\}}\lt\|e^{a A^{-\frac{1}{3}}\left|D_x\right|^{\frac{2}{3} } t }    \nabla \lt(\nabla^2 |D_x|^\frac13 d\rt) \rt\|_{Y_{m,\epsilon}}^{\frac32} \lt \|e^{a A^{-\frac{1}{3}}\left|D_x\right|^{\frac{2}{3} }t}    |D_x|^\frac{j+1}{3} d\rt\|_{Y_{m,\epsilon}}^\frac12\\
&\qquad\times \lt\|e^{a A^{-\frac{1}{3}}\left|D_x\right|^{\frac{2}{3} }t}    \nabla |D_x|^\frac13 d\rt\|_{Y_{m,\epsilon}}^\frac32\lt\|  e^{a A^{-\frac{1}{3}}|D_x|^{\frac{2}{3} } t }     |D_x|^{\frac{1-j}{3}}\nabla^2 |D_x|^\frac13 d    \rt\|_{Y_{m,\epsilon}}^\frac12     \\
&\lesssim \frac{1}{A^{\frac16}} \lt( \lt\|     \nabla^2|D_x|^\frac13 d\rt \|_{X_{a,m,\epsilon}}^{\frac32}\lt\|     \nabla|D_x|^{\frac13} d \rt\|_{X_{a,m,\epsilon}}^{\frac52}+\lt\|    \nabla^2 |D_x|^\frac13 d \rt\|_{X_{a,m,\epsilon}}^2\lt\| |D_x|^\frac13 d \rt\|_{X_{a,m,\epsilon}}^\frac12  \lt\|  \nabla|D_x|^\frac13 d \rt\|_{X_{a,m,\epsilon}}^{\frac32}\rt),
	\end{align*}
	which completes the proof by recalling \eqref{eq:nabla^2 d}.
	\end{proof}

	\begin{proof}[\bf Proof of Lemma \ref{lem:est of pypy nabla d2 e1}.]
		Note that
		\begin{align*} 
			& \quad   \frac1A   \left|\Re\left( \p^2  \left( |\nabla d|^2 e_1\right) \left\lvert\, \mathcal{M} e^{2 a A^{-\frac{1}{3}}\left|D_x\right|^{\frac{2}{3} }t }\langle D_x\rangle^{2 m}\langle\frac{1}{D_x}\rangle^{2 \epsilon} \p^2 |D_x|^\frac23  d\right.\right)\right| \\
			&\lesssim \frac1A \int_{\mathbb{R}^2} e^{2 a A^{-\frac{1}{3}}|k|^{\frac{2}{3} } t }\langle k\rangle^{2 m   }\langle\frac{1}{k}\rangle^{2 \epsilon} |k|^\frac23 \left|\int_{\mathbb{R}} \mathcal{M}\left(k, D_y\right)   \q^3 d_k^1(y)  l \lt(k-l \rt)  \q d_l(y)  \cdot d_{k-l }(y)  d y\right| d k d l \\
			&\quad + \frac1A \int_{\mathbb{R}^2} e^{2 a A^{-\frac{1}{3}}|k|^{\frac{2}{3} } t }\langle k\rangle^{2 m   }\langle\frac{1}{k}\rangle^{2 \epsilon} |k|^\frac23 \left|\int_{\mathbb{R}} \mathcal{M}\left(k, D_y\right)   \q^3 d_k^1(y)  \q\py d_l(y)    \cdot \py d_{k-l}(y)  d y\right| d k d l  .  
		\end{align*}

		$\bullet$~\textbf{In ${\mathcal{R}_{\text{res}}(k,l)}$.} It follows from \eqref{eq:trick1} 
		 that
		\begin{align*}  
			&\quad \int_{\mathcal{R}_{\text{res}}(k,l)} e^{2 a A^{-\frac{1}{3}}|k|^{\frac{2}{3} } t }\langle k\rangle^{2 m   }\langle\frac{1}{k}\rangle^{2 \epsilon} |k|^\frac23 \left|\int_{\mathbb{R}} \mathcal{M}\left(k, D_y\right)  \q^3 d_k^1(y)  l \lt(k-l \rt)  \q d_l(y)  \cdot d_{k-l }(y)  d y\right| d k d l \\
			&\lesssim \lt\|e^{a A^{-\frac{1}{3}}|k|^{\frac{2}{3} }t}\langle k\rangle^{m  }\langle\frac{1}{k}\rangle^\epsilon |k|^\frac13 \lt \|\q^3 d_k^1\rt\|_{L_y^2}\rt\|_{L_k^2}    \lt \|e^{a A^{-\frac{1}{3}}|l|^{\frac{2}{3} } t }  |l|^\frac23  \lt\|\q d_l \rt\|_{L_y^{\infty}}\rt\|_{L_l^1} \\
			&\quad \times\lt\|e^{a A^{-\frac{1}{3}}|k-l|^{\frac{2}{3} } t }\langle k-l\rangle^{m   } \langle\frac{1}{k-l}\rangle^\epsilon      |k-l |^\frac53     \lt\|d_{k-l} \rt\|_{L_y^2} d\eta \rt\|_{L_{k-l}^2} \\
			&\lesssim \lt\|e^{a A^{-\frac{1}{3}}\left|D_x\right|^{\frac{2}{3} } t }   \nabla \lt(\nabla^2|D_x|^\frac13 d^1\rt) \rt\|_{Y_{m,\epsilon}}\lt\|e^{a A^{-\frac{1}{3}}\left|D_x\right|^{\frac{2}{3} }t}     \nabla^2 |D_x|^\frac13 d\rt\|_{Y_{m,\epsilon}}\lt\|e^{a A^{-\frac{1}{3}}\left|D_x\right|^{\frac{2}{3} } t }   |D_x|^\frac13 \nabla|D_x|^{\frac13} d \rt\|_{Y_{m,\epsilon}}, \alabel{eq:est of px d px py d 1}
		\end{align*}
        where we used H$\text{\"{o}}$lder inequality and \eqref{eq:GN1} to get
        \begin{align*}
            &\lt\|e^{a A^{-\frac{1}{3}}|l|^{\frac{2}{3} } t }  |l|^\frac23  \lt\|\q d_l\rt \|_{L_y^{\infty}}\rt\|_{L_l^1}\\
            &\lesssim\lt\|e^{a A^{-\frac{1}{3}}|l|^{\frac{2}{3} } t }  |l|^\frac23  \lt\|\q d_l \rt\|_{L_y^{2}}^{\frac12}\lt\|\py\q d_l \rt\|_{L_y^{2}}^{\frac12}\rt\|_{L_l^1}\\
            &\lesssim\lt \|e^{a A^{-\frac{1}{3}}|l|^{\frac{2}{3} } t } \langle l \rangle^{m} \langle \frac{1}{l} \rangle^{\epsilon} \lt\| |l|\q |l|^\frac13 d_l \rt\|_{L_y^{2}}^{\frac12}\lt\|\py\q |l|^{\frac13}d_l \rt\|_{L_y^{2}}^{\frac12}\rt\|_{L_l^2}\lt\||l|^{-\frac16}\langle l \rangle^{-m} \langle \frac{1}{l} \rangle^{-\epsilon}\rt\|_{L^2_l}\\
            &\lesssim \lt\|e^{a A^{-\frac{1}{3}}\left|D_x\right|^{\frac{2}{3} }t}     \nabla^2 |D_x|^\frac13 d\rt\|_{Y_{m,\epsilon}}
        \end{align*}
		Moreover,  by Corollary \ref{cor:L infty} ($\alpha=\frac23,~\beta=\frac13$) we have 
		\begin{align*}  
			&\quad \int_{\mathcal{R}_{\text{res}}(k,l)} e^{2 a A^{-\frac{1}{3}}|k|^{\frac{2}{3} } t }\langle k\rangle^{2 m   }\langle\frac{1}{k}\rangle^{2 \epsilon} |k|^\frac23 \left|\int_{\mathbb{R}} \mathcal{M}\left(k, D_y\right) \q^3  d_k^1(y)  \q\py d_l(y)    \cdot \py d_{k-l}(y)  d y\right| d k d l \\
			&\lesssim \lt\|e^{a A^{-\frac{1}{3}}|k|^{\frac{2}{3} }t}\langle k\rangle^{m  }\langle\frac{1}{k}\rangle^\epsilon |k|^\frac13 \lt \|\nabla\q^2 d_k^1\rt\|_{L_y^2}\rt\|_{L_k^2}    \lt \|e^{a A^{-\frac{1}{3}}|l|^{\frac{2}{3} } t }   \lt\| \q\py d_l \rt\|_{L_y^{\infty}}\rt\|_{L_l^1} \\
			&\quad \times\lt\|e^{a A^{-\frac{1}{3}}|k-l|^{\frac{2}{3} } t }\langle k-l\rangle^{m   } \langle\frac{1}{k-l}\rangle^\epsilon   |k-l|^\frac13   \lt\| \py d_{k-l}\rt\|_{L_y^2}   \rt\|_{L_{k-l}^2} \\
			&\lesssim \lt\|e^{a A^{-\frac{1}{3}}\left|D_x\right|^{\frac{2}{3} } t }   \nabla \lt(\nabla^2 |D_x|^\frac13 d^1 \rt)\rt\|_{Y_{m,\epsilon}} \lt\|e^{a A^{-\frac{1}{3}}\left|D_x\right|^{\frac{2}{3} }t}    |D_x|^\frac13 \nabla^2 |D_x|^\frac13 d\rt\|_{Y_{m,\epsilon}}^\frac12       \\
			&\quad \times\lt\|e^{a A^{-\frac{1}{3}}\left|D_x\right|^{\frac{2}{3} }t}   \nabla \lt(\nabla^2 |D_x|^\frac13 d\rt)\rt\|_{Y_{m,\epsilon}}^\frac12 \lt\|e^{a A^{-\frac{1}{3}}\left|D_x\right|^{\frac{2}{3} } t }   \nabla |D_x|^\frac13  d \rt\|_{Y_{m,\epsilon}} .  \alabel{eq:est of py d py^2 d 1}
		\end{align*}

		$\bullet$~\textbf{In ${\mathcal{R}_{\text{HL}}(k,l)}$.}
		By \eqref{eq:GN1} and \eqref{eq:trick2},
		we get
		\begin{align*}   
			&\quad\int_{\mathcal{R}_{\text{HL}}(k,l)}   e^{2 a A^{-\frac{1}{3}}|k|^{\frac{2}{3} } t }\langle k\rangle^{2 m   }\langle\frac{1}{k}\rangle^{2 \epsilon} |k|^\frac23 \left|\int_{\mathbb{R}} \mathcal{M}\left(k, D_y\right)   \q^3 d_k^1(y)  l \lt(k-l \rt) \q  d_l(y)  \cdot d_{k-l }(y)  d y\right| d k d l \\
			&\lesssim  \lt\|e^{a A^{-\frac{1}{3}}|k|^{\frac{2}{3} }t}\langle k\rangle^{m   }\langle\frac{1}{k}\rangle^\epsilon |k|^\frac13\lt\| \q^3 d_k^1\rt\|_{L_y^2}\rt\|_{L_k^2}     \lt\|e^{a A^{-\frac{1}{3}}|l|^{\frac{2}{3} } t } \langle l\rangle^{m   }\langle\frac{1}{l}\rangle^\epsilon |l|^\frac32 \| \q d_l \|_{L_y^{\infty}}\rt\|_{L_l^2} \\
			&\quad \times\lt\|e^{a A^{-\frac{1}{3}}|k-l|^{\frac{2}{3} } t }\langle k-l\rangle^{m   } \langle\frac{1}{k-l}\rangle^\epsilon   |k-l|^{\frac43}\lt \| d_{k-l}\rt\|_{L_y^2}  \rt\|_{L_{k-l}^2} 
			\lt\| |k-l|^{-\frac12} \langle k-l\rangle^{-m}\langle\frac{1}{k-l}\rangle^{-\epsilon}\rt\|_{L_{k-l}^2}\\
			&\lesssim  \lt \|e^{a A^{-\frac{1}{3}}\left|D_x\right|^{\frac{2}{3} } t }   \nabla \lt(\nabla^2 |D_x|^\frac13 d^1 \rt)\rt\|_{Y_{m,\epsilon}}\lt \|e^{a A^{-\frac{1}{3}}\left|D_x\right|^{\frac{2}{3} }t}    |D_x|^\frac13 \nabla^2 |D_x|^\frac13 d
            \rt\|_{Y_{m,\epsilon}}^\frac12       \\
			&\quad \times \lt\|e^{a A^{-\frac{1}{3}}\left|D_x\right|^{\frac{2}{3} }t}   \nabla \lt(\nabla^2 |D_x|^\frac13 d\rt)\rt\|_{Y_{m,\epsilon}}^\frac12\lt \|e^{a A^{-\frac{1}{3}}\left|D_x\right|^{\frac{2}{3} } t }   \nabla |D_x|^\frac13  d \rt\|_{Y_{m,\epsilon}}.  \alabel{eq:est of px d px py d 2}
		\end{align*}
		Similarly, we have
		\begin{align*}   
			&\quad\int_{\mathcal{R}_{\text{HL}}(k,l)}   e^{2 a A^{-\frac{1}{3}}|k|^{\frac{2}{3} } t }\langle k\rangle^{2 m   }\langle\frac{1}{k}\rangle^{2 \epsilon} |k|^\frac23 \left|\int_{\mathbb{R}} \mathcal{M}\left(k, D_y\right)   \q^3 d_k^1(y)  \q\py d_l(y)    \cdot \py d_{k-l}(y)  d y\right| d k d l \\
			&\lesssim  \lt\|e^{a A^{-\frac{1}{3}}|k|^{\frac{2}{3} }t}\langle k\rangle^{m   }\langle\frac{1}{k}\rangle^\epsilon |k|^\frac13 \lt\| \q^3 d_k^1\rt\|_{L_y^2}\rt\|_{L_k^2}     \lt\|e^{a A^{-\frac{1}{3}}|l|^{\frac{2}{3} } t } \langle l\rangle^{m   }\langle\frac{1}{l}\rangle^\epsilon |l|^\frac12 \lt\| \q\py d_l \rt\|_{L_y^{\infty}}\rt\|_{L_l^2} \\
			&\quad \times\lt\|e^{a A^{-\frac{1}{3}}|k-l|^{\frac{2}{3} } t }\langle k-l\rangle^{m   } \langle\frac{1}{k-l}\rangle^\epsilon    |k-l|^\frac13 \lt\|\py d_{k-l }\rt\|_{L_y^2} \rt\|_{L_{k-l}^2} 
			\lt\||k-l|^{-\frac12}\langle k-l\rangle^{-m}\langle\frac{1}{k-l}\rangle^{-\epsilon}\rt\|_{L_{k-l}^2}\\
			&\lesssim  
           \lt \|e^{a A^{-\frac{1}{3}}\left|D_x\right|^{\frac{2}{3} } t }   \nabla \lt(\nabla^2 |D_x|^\frac13 d^1 \rt)\rt\|_{Y_{m,\epsilon}}\lt \|e^{a A^{-\frac{1}{3}}\left|D_x\right|^{\frac{2}{3} }t}    |D_x|^\frac13 \nabla^2 |D_x|^\frac13 d\rt\|_{Y_{m,\epsilon}}^\frac12       \\
			&\quad \times \lt\|e^{a A^{-\frac{1}{3}}\left|D_x\right|^{\frac{2}{3} }t}   \nabla \lt(\nabla^2 |D_x|^\frac13 d\rt)\rt\|_{Y_{m,\epsilon}}^\frac12 \lt\|e^{a A^{-\frac{1}{3}}\left|D_x\right|^{\frac{2}{3} } t }   \nabla |D_x|^\frac13  d \rt\|_{Y_{m,\epsilon}}
            \alabel{eq:est of py d py^2 d 2}
		\end{align*}
		
		$\bullet$~\textbf{In ${\mathcal{R}_{\text{LH}}(k,l)}$.} 
By \eqref{eq:GN1}, \eqref{eq:trick3} and \eqref{eq:trick3.5} ($s_1=0$, $s=s_2=\frac12$, $s_3=\epsilon$),
		we get
		\begin{align*}   
			&\quad\int_{\mathcal{R}_{\text{LH}}(k,l)}   e^{2 a A^{-\frac{1}{3}}|k|^{\frac{2}{3} } t }\langle k\rangle^{2 m   }\langle\frac{1}{k}\rangle^{2 \epsilon} |k|^\frac23 \left|\int_{\mathbb{R}} \mathcal{M}\left(k, D_y\right)   \q^3 d_k^1(y)  l \lt(k-l \rt) \q  d_l(y)  \cdot d_{k-l }(y)  d y\right| d k d l \\
			&\lesssim  \lt\|e^{a A^{-\frac{1}{3}}|k|^{\frac{2}{3} }t}\langle k\rangle^{m   }\langle\frac{1}{k}\rangle^\epsilon |k|^\frac13 \lt\| \q^3 d_k^1\rt\|_{L_y^2}\rt\|_{L_k^2}     \lt\|e^{a A^{-\frac{1}{3}}|l|^{\frac{2}{3} } t } \langle l\rangle^{m   }\langle\frac{1}{l}\rangle^\epsilon |l|^\frac32 \| \q d_l \|_{L_y^{\infty}}\rt\|_{L_l^2} \\
			&\quad \times\lt\|e^{a A^{-\frac{1}{3}}|k-l|^{\frac{2}{3} } t }\langle k-l\rangle^{m   } \langle\frac{1}{k-l}\rangle^\epsilon   |k-l|^{\frac43}\lt \| d_{k-l}\rt\|_{L_y^2}  \rt\|_{L_{k-l}^2} 
			\lt\|  \langle k\rangle^{-m}\lt(\langle\frac{1}{k}\rangle^{\epsilon}+\langle\frac{1}{k}\rangle^{\frac12-\epsilon}\rt)\rt\|_{L_{k}^2},
		\end{align*}
        which is bounded by the last term  of \eqref{eq:est of px d px py d 2}.
		Similarly,
		\begin{align*}   
			&\quad\int_{\mathcal{R}_{\text{LH}}(k,l)}   e^{2 a A^{-\frac{1}{3}}|k|^{\frac{2}{3} } t }\langle k\rangle^{2 m   }\langle\frac{1}{k}\rangle^{2 \epsilon} |k|^\frac23 \left|\int_{\mathbb{R}} \mathcal{M}\left(k, D_y\right)   \q^3 d_k^1(y)  \q\py d_l(y)    \cdot \py d_{k-l}(y)  d y\right| d k d l \\
			&\lesssim  \lt\|e^{a A^{-\frac{1}{3}}|k|^{\frac{2}{3} }t}\langle k\rangle^{m   }\langle\frac{1}{k}\rangle^\epsilon |k|^\frac13 \lt\| \q^3 d_k^1\rt\|_{L_y^2}\rt\|_{L_k^2}     \lt\|e^{a A^{-\frac{1}{3}}|l|^{\frac{2}{3} } t } \langle l\rangle^{m   }\langle\frac{1}{l}\rangle^\epsilon |l|^\frac12 \lt\| \q\py d_l \rt\|_{L_y^{\infty}}\rt\|_{L_l^2} \\
			&\quad \times\lt\|e^{a A^{-\frac{1}{3}}|k-l|^{\frac{2}{3} } t }\langle k-l\rangle^{m   } \langle\frac{1}{k-l}\rangle^\epsilon    |k-l|^\frac13 \lt\|\py d_{k-l }\rt\|_{L_y^2} \rt\|_{L_{k-l}^2}  \lt\|\langle k\rangle^{-m}\lt(\langle\frac{1}{k}\rangle^{\epsilon}+\langle\frac{1}{k}\rangle^{\frac12-\epsilon}\rt)\rt\|_{L_{k}^2}, 
		\end{align*}
        which has the same upper bound as \eqref{eq:est of py d py^2 d 2}.
		
		Combining \eqref{eq:est of px d px py d 1}--\eqref{eq:est of py d py^2 d 2}, we arrive
		\begin{align*}
			&\quad \frac1A   \left|\Re\left( \p^2  \left( |\nabla d|^2 e_1\right) \left\lvert\, \mathcal{M} e^{2 a A^{-\frac{1}{3}}\left|D_x\right|^{\frac{2}{3} }t }\langle D_x\rangle^{2 m}\langle\frac{1}{D_x}\rangle^{2 \epsilon} \p^2 |D_x|^\frac23  d\right.\right)\right|\\
			&\lesssim \frac1A \lt(\lt\|e^{a A^{-\frac{1}{3}}\left|D_x\right|^{\frac{2}{3} } t }   \nabla \lt(\nabla^2|D_x|^\frac13 d^1\rt) \rt\|_{Y_{m,\epsilon}}\lt\|e^{a A^{-\frac{1}{3}}\left|D_x\right|^{\frac{2}{3} }t}     \nabla^2 |D_x|^\frac13 d\rt\|_{Y_{m,\epsilon}}\lt\|e^{a A^{-\frac{1}{3}}\left|D_x\right|^{\frac{2}{3} } t }   |D_x|^\frac13 \nabla|D_x|^{\frac13} d \rt\|_{Y_{m,\epsilon}}\rt.\\
			&\lt.\qquad+\lt\|e^{a A^{-\frac{1}{3}}\left|D_x\right|^{\frac{2}{3} } t }   \nabla \lt(\nabla^2 |D_x|^\frac13 d^1\rt )\rt\|_{Y_{m,\epsilon}} \lt\|e^{a A^{-\frac{1}{3}}\left|D_x\right|^{\frac{2}{3} }t}    |D_x|^\frac13 \nabla^2 |D_x|^\frac13 d\rt\|_{Y_{m,\epsilon}}^\frac12   \rt.    \\
			&\lt.\qqquad \times \lt\|e^{a A^{-\frac{1}{3}}\left|D_x\right|^{\frac{2}{3} }t}   \nabla \lt(\nabla^2 |D_x|^\frac13 d\rt)\rt\|_{Y_{m,\epsilon}}^\frac12 \lt\|e^{a A^{-\frac{1}{3}}\left|D_x\right|^{\frac{2}{3} } t }   \nabla |D_x|^\frac13  d \rt\|_{Y_{m,\epsilon}}\rt)\\
            &\lesssim \frac1{A^{\frac16}}\lt\|e^{a A^{-\frac{1}{3}}\left|D_x\right|^{\frac{2}{3} } t }  \nabla^2 |D_x|^\frac13 d^1\rt \|_{X_{a,m,\epsilon}}  \lt\|e^{a A^{-\frac{1}{3}}\left|D_x\right|^{\frac{2}{3} } t }  \nabla^2|D_x|^\frac13 d \rt\|_{X_{a,m,\epsilon}}  \lt\|e^{a A^{-\frac{1}{3}}\left|D_x\right|^{\frac{2}{3} } t }  \nabla|D_x|^\frac13 d \rt\|_{X_{a,m,\epsilon}}.
		\end{align*}
		The proof is complete.
	\end{proof}

	\begin{proof}[\bf Proof of Lemma \ref{lem:est of py^2 u}.]
		Consider the following decomposition: 
		\begin{align*} 
			& \quad   \frac1A   \left|\Re\left( \p^2  \left(  u \cdot \nabla d \right) \left\lvert\, \mathcal{M} e^{2 a A^{-\frac{1}{3}}\left|D_x\right|^{\frac{2}{3} }t }\langle D_x\rangle^{2 m}\langle\frac{1}{D_x}\rangle^{2 \epsilon} \p^2 |D_x|^\frac23  d\right.\right)\right| \\
			&\lesssim \frac1A \int_{\mathbb{R}^2} e^{2 a A^{-\frac{1}{3}}|k|^{\frac{2}{3} } t }\langle k\rangle^{2 m   }\langle\frac{1}{k}\rangle^{2 \epsilon} |k|^\frac23 \left|\int_{\mathbb{R}} \mathcal{M}\left(k, D_y\right)   (k-l) \q\py \Delta_l^{-1} \omega_l \q^3 d_k(y) \cdot d_{k-l}(y)    d y\right| d k d l \\
			&\quad + \frac1A \int_{\mathbb{R}^2} e^{2 a A^{-\frac{1}{3}}|k|^{\frac{2}{3} } t }\langle k\rangle^{2 m   }\langle\frac{1}{k}\rangle^{2 \epsilon} |k|^\frac23 \left|\int_{\mathbb{R}} \mathcal{M}\left(k, D_y\right)   (k-l) \py \Delta_l^{-1} \omega_l \q^3 d_k(y) \cdot \q d_{k-l}(y)    d y\right| d k d l \\
			&\quad + \frac1A \int_{\mathbb{R}^2} e^{2 a A^{-\frac{1}{3}}|k|^{\frac{2}{3} } t }\langle k\rangle^{2 m   }\langle\frac{1}{k}\rangle^{2 \epsilon} |k|^\frac23 \left|\int_{\mathbb{R}} \mathcal{M}\left(k, D_y\right)    \q (l \Delta_l^{-1} \omega_l) \q^3 d_k(y) \cdot\py  d_{k-l}(y)    d y\right| d k d l \\
			&\quad + \frac1A \int_{\mathbb{R}^2} e^{2 a A^{-\frac{1}{3}}|k|^{\frac{2}{3} } t }\langle k\rangle^{2 m   }\langle\frac{1}{k}\rangle^{2 \epsilon} |k|^\frac23 \left|\int_{\mathbb{R}} \mathcal{M}\left(k, D_y\right)   l  \Delta_l^{-1} \omega_l \q^3 d_k(y) \cdot \q\py d_{k-l}(y)    d y\right| d k d l\\
            &=: \frac1A \int_{\mathbb{R}^2}  \Phi_1(k,l)dkdl+\cdots+ \frac1A \int_{\mathbb{R}^2}  \Phi_4(k,l)dkdl.
		\end{align*}
        Next, we estimate each term in four steps.
		
		\underline{\textbf{Step I: Estimates of $\Phi_1(k,l)$.}}
		By  \eqref{eq:GN1} and \eqref{eq:trick1}, we have 
		\begin{align*}   
         \int_{\mathcal{R}_{\text{res}}(k,l)}\Phi_1(k,l)dkdl
			&\lesssim\lt\|e^{a A^{-\frac{1}{3}}|k|^{\frac{2}{3} }t}\langle k\rangle^{m  }\langle\frac{1}{k}\rangle^\epsilon  |k|^\frac13  \lt\| \q^3 d_k\rt\|_{L_y^2}\rt\|_{L_k^2}    \lt  \|e^{a A^{-\frac{1}{3}}|l|^{\frac{2}{3} } t } |l|^{-\frac13} \lt\|\q\py \Delta_l^{-1} \omega_l \rt\|_{L_y^{2}}\rt\|_{L_l^1} \\
			&\qquad \times\lt\|e^{a A^{-\frac{1}{3}}|k-l|^{\frac{2}{3} } t }\langle k-l\rangle^{m   }\langle\frac{1}{k-l}\rangle^\epsilon  |k-l|^\frac53 \lt\|   d_{k-l}\rt\|_{L_y^\infty}\rt\|_{L_{k-l}^2} \\
			&\lesssim  \lt\|e^{a A^{-\frac{1}{3}}\left|D_x\right|^{\frac{2}{3} } t }   \nabla \lt(\nabla^2|D_x|^\frac13 d\rt)\rt\|_{Y_{m,\epsilon}}\lt\|e^{a A^{-\frac{1}{3}}\left|D_x\right|^{\frac{2}{3} } t }     \omega\rt\|_{Y_{m,\epsilon}} \\
			&\quad \times  \lt\|e^{a A^{-\frac{1}{3}}\left|D_x\right|^{\frac{2}{3} } t }   |D_x|^\frac13 \nabla|D_x|^{\frac13}   d\rt\|_{Y_{m,\epsilon}}^\frac12 \lt\|e^{a A^{-\frac{1}{3}}\left|D_x\right|^{\frac{2}{3} } t }   |D_x|^\frac13 \nabla^2 |D_x|^\frac13   d\rt\|_{Y_{m,\epsilon}}^\frac12,\alabel{11}
		\end{align*}
       where we used H$\text{\"{o}}$lder inequality in the first inequality above to get
       \begin{align*}
           \lt  \|e^{a A^{-\frac{1}{3}}|l|^{\frac{2}{3} } t } |l|^{-\frac13} \lt\|\q\py \Delta_l^{-1} \omega_l \rt\|_{L_y^{2}}\rt\|_{L_l^1}
           &\lesssim\lt  \|e^{a A^{-\frac{1}{3}}|l|^{\frac{2}{3} } t } \langle l\rangle^m\langle \frac1l\rangle^\epsilon\lt\|\q\py \Delta_l^{-1} \omega_l \rt\|_{L_y^{2}}\rt\|_{L_l^2}\lt  \| |l|^{-\frac13}\langle l\rangle^{-m}\langle \frac1l\rangle^{-\epsilon}\rt\|_{L_l^2}\\
          & \lesssim \lt\|e^{a A^{-\frac{1}{3}}\left|D_x\right|^{\frac{2}{3} } t }     \omega\rt\|_{Y_{m,\epsilon}}
       \end{align*}
       for $\epsilon>0.$
		 Moreover, \eqref{eq:trick2} implies
		\begin{align*}   
			&\quad \int_{\mathcal{R}_{\text{HL}}(k,l)}\Phi_1(k,l)dkdl\\ & \lesssim\lt\|e^{a A^{-\frac{1}{3}}|k|^{\frac{2}{3} } t }\langle k\rangle^{m  }\langle\frac{1}{k}\rangle^\epsilon  |k|^\frac13 \lt\| \q^3 d_k\rt\|_{L_y^2}\rt\|_{L_k^2}           \lt  \|e^{a A^{-\frac{1}{3}}|l|^{\frac{2}{3} } t }\langle l\rangle^{m  }\langle\frac{1}{l}\rangle^\epsilon|l|^{\frac13}\lt\| \q\py \Delta_l^{-1} \omega_l  \rt\|_{L_y^{2}}\rt\|_{L_l^2} \\
            & \quad \times\lt\|\langle k-l\rangle^{-m  }\langle\frac{1}{k-l}\rangle^{-\epsilon}   |k-l|^{-\frac13}\rt\|_{L_{k-l}^2} \lt\|e^{a A^{-\frac{1}{3}}|k-l|^{\frac{2}{3} } t }\langle k-l\rangle^{m  }\langle\frac{1}{k-l}\rangle^\epsilon   |k-l|^\frac43  \|   d_{k-l}\|_{L_y^\infty}\rt\|_{L_{k-l}^2} 
            \end{align*}
            and \eqref{eq:trick3}-\eqref{eq:trick3.5} imply
            \begin{align*}    
            &\quad \int_{\mathcal{R}_{\text{LH}}(k,l)}\Phi_1(k,l)dkdl\\ 
			& \lesssim\lt\|e^{a A^{-\frac{1}{3}}|k|^{\frac{2}{3} } t }\langle k\rangle^m\langle\frac{1}{k}\rangle^\epsilon  |k|^\frac13  \| \q^3 d_k\|_{L_y^2}\rt\|_{L_k^2}  \lt\|e^{a A^{-\frac{1}{3}}|l|^{\frac{2}{3} } t }\langle l\rangle^{ m  }\langle\frac{1}{l}\rangle^\epsilon|l|^\frac13 \|\q\py \Delta_l^{-1} \omega_l \|_{L_y^{2}}\rt\|_{L_l^2} \\
			& \quad \times\lt\|\left(\langle\frac{1}{k}\rangle^\epsilon+\langle\frac{1}{k}\rangle^{\frac{1}{3}-\epsilon}\right)\langle k\rangle^{-m}\rt\|_{L_k^2} \lt\|e^{a A^{-\frac{1}{3}}|k-l|^{\frac{2}{3} } t }\langle k-l\rangle^{m  }\langle\frac{1}{k-l}\rangle^\epsilon   |k-l|^\frac43  \|   d_{k-l}\|_{L_y^\infty}\rt\|_{L_{k-l}^2},
		\end{align*}
       where   both terms can be controlled by
        \begin{align*} 
			& C\lt\|e^{a A^{-\frac{1}{3}}\left|D_x\right|^{\frac{2}{3} } t }    \nabla \lt(\nabla^2 |D_x|^\frac13 d \rt)\rt\|_{Y_{m,\epsilon}}  \lt\|e^{a A^{-\frac{1}{3}}\left|D_x\right|^{\frac{2}{3} } t }   |D_x|^\frac13  \omega\rt\|_{Y_{m,\epsilon}}   \\
			& \times  \lt\|e^{a A^{-\frac{1}{3}}\left|D_x\right|^{\frac{2}{3} } t }     \nabla  |D_x|^{\frac13} d\rt\|_{Y_{m,\epsilon}}^\frac12 \lt\|e^{a A^{-\frac{1}{3}}\left|D_x\right|^{\frac{2}{3} } t }     \nabla^2 |D_x|^\frac13   d\rt\|_{Y_{m,\epsilon}}^\frac12 \alabel{12}
		\end{align*}
		due to \eqref{eq:GN1}.	
		
			\underline{\textbf{Step II: Estimates of $\Phi_2(k,l)$.}}
		Using  \eqref{eq:trick1}, we get
		\begin{align*}   
			\int_{\mathcal{R}_{\text{res}}(k,l)}\Phi_2(k,l) d k d l
			&\lesssim\lt\|e^{a A^{-\frac{1}{3}}|k|^{\frac{2}{3} }t}\langle k\rangle^{m  }\langle\frac{1}{k}\rangle^\epsilon  |k|^\frac13 \lt \| \q^3 d_k\rt\|_{L_y^2}\rt\|_{L_k^2}      \lt\|e^{a A^{-\frac{1}{3}}|l|^{\frac{2}{3} } t } |l|^{-\frac16}  \lt\|\py \Delta_l^{-1} \omega_l \rt\|_{L_y^{\infty}}\rt\|_{L_l^1} \\
			&\quad \times\lt\|e^{a A^{-\frac{1}{3}}|k-l|^{\frac{2}{3} } t }\langle k-l\rangle^{m   }\langle\frac{1}{k-l}\rangle^\epsilon  |k-l|^\frac32 \lt\|  \q d_{k-l}\rt\|_{L_y^2}\rt\|_{L_{k-l}^2} \\
			&\lesssim  \lt\|e^{a A^{-\frac{1}{3}}\left|D_x\right|^{\frac{2}{3} } t }   \nabla\lt( \nabla^2 |D_x|^\frac13 d\rt)\rt\|_{Y_{m,\epsilon}}\lt\|e^{a A^{-\frac{1}{3}}\left|D_x\right|^{\frac{2}{3} } t }     \omega\rt\|_{Y_{m,\epsilon}} \\
			&\quad \times\lt   \|e^{a A^{-\frac{1}{3}}\left|D_x\right|^{\frac{2}{3} } t }   |D_x|^\frac13 \nabla^2 |D_x|^\frac13   d\rt\|_{Y_{m,\epsilon}} ^{\frac56}\lt\|e^{a A^{-\frac{1}{3}}\left|D_x\right|^{\frac{2}{3} } t }   |D_x|^\frac13 \nabla |D_x|^\frac13   \rt\|_{Y_{m,\epsilon}}^{\frac16},\alabel{21}
		\end{align*}
         where we used H$\text{\"{o}}$lder inequality and \eqref{eq:GN2} to get
        \begin{align*}
            \lt\|e^{a A^{-\frac{1}{3}}|l|^{\frac{2}{3} } t }  |l|^{-\frac16}  \|\py \Delta_{l}^{-1}\omega_l\|_{L_y^{\infty}}\rt\|_{L_l^1}
            &\lesssim\lt\|e^{a A^{-\frac{1}{3}}|l|^{\frac{2}{3} } t }  |l|^{-\frac23}  \lt\|\nabla_l\py \Delta_{l}^{-1}\omega_l \rt\|_{L_y^{2}}\rt\|_{L_l^1}\\
            &\lesssim \lt\|e^{a A^{-\frac{1}{3}}|l|^{\frac{2}{3} } t } \langle l \rangle^{m} \langle \frac{1}{l} \rangle^{\epsilon} \lt\|\nabla_l\py \Delta_{l}^{-1}\omega_l\rt\|_{L_y^{2}}\rt\|_{L_l^2}\lt\||l|^{-\frac23}\langle l \rangle^{-m} \langle \frac{1}{l} \rangle^{-\epsilon}\rt\|_{L^2_l}\\
            &\lesssim \lt\|e^{a A^{-\frac{1}{3}}\left|D_x\right|^{\frac{2}{3} }t}  \omega \rt\|_{Y_{m,\epsilon}}
        \end{align*}
        for $\epsilon>\frac16.$
		Using \eqref{eq:trick2}--\eqref{eq:trick3.5} again, we have
		\begin{align*}   
			&\quad\int_{\mathcal{R}_{\text{HL}}(k,l)} \Phi_2(k,l) d k d l \\
		 &\lesssim\lt\|e^{a A^{-\frac{1}{3}}|k|^{\frac{2}{3} } t }\langle k\rangle^{m  }\langle\frac{1}{k}\rangle^\epsilon  |k|^\frac13 \lt\| \q^3 d_k\rt\|_{L_y^2}\rt\|_{L_k^2}            \lt \|e^{a A^{-\frac{1}{3}}|l|^{\frac{2}{3} } t }\langle l\rangle^{m  }\langle\frac{1}{l}\rangle^\epsilon|l|^{\frac43} \lt\| \py \Delta_l^{-1} \omega_l  \rt\|_{L_y^{2}}\rt\|_{L_l^2} \\
			& \quad \times\lt\||k-l|^{-\frac13}\langle k-l\rangle^{-m}\langle \frac{1}{k-l}\rangle^{-\epsilon}\rt\|_{L_{k-l}^2}\lt\|e^{a A^{-\frac{1}{3}}|k-l|^{\frac{2}{3} } t }\langle k-l\rangle^{m  }\langle\frac{1}{k-l}\rangle^\epsilon   |k-l|^\frac13  \| \q  d_{k-l}\|_{L_y^\infty}\rt\|_{L_{k-l}^2},
		\end{align*}
        		and
		\begin{align*}    
			&\quad\int_{\mathcal{R}_{\text{LH}}(k,l)} \Phi_2(k,l) d k d l \\
			& \lesssim\lt\|e^{a A^{-\frac{1}{3}}|k|^{\frac{2}{3} } t }\langle k\rangle^m\langle\frac{1}{k}\rangle^\epsilon  |k|^\frac13  \| \q^3 d_k\|_{L_y^2}\rt\|_{L_k^2}  
            \lt\|e^{a A^{-\frac{1}{3}}|l|^{\frac{2}{3} } t }\langle l\rangle^{ m  }\langle\frac{1}{l}\rangle^\epsilon|l|^\frac43 \|\py \Delta_l^{-1} \omega_l \|_{L_y^{2}}\rt\|_{L_l^2} \\
			& \quad \times\lt\|\left(\langle\frac{1}{k}\rangle^\epsilon+\langle\frac{1}{k}\rangle^{\frac{1}{3}-\epsilon}\right)\langle k\rangle^{-m}\rt\|_{L_k^2}\lt\|e^{a A^{-\frac{1}{3}}|k-l|^{\frac{2}{3} } t }\langle k-l\rangle^{m  }\langle\frac{1}{k-l}\rangle^\epsilon   |k-l|^\frac13  \| \q  d_{k-l}\|_{L_y^\infty}\rt\|_{L_{k-l}^2}.
		\end{align*}
      By \eqref{eq:GN1}, both terms can be controlled by \eqref{12}.
            
		\underline{\textbf{Step III: Estimates of $\Phi_3(k,l)$.}}
		From \eqref{eq:GN1} and \eqref{eq:trick1}, we infer
		\begin{align*}   
			\int_{\mathcal{R}_{\text{res}}(k,l)} \Phi_3(k,l)d k d l
			&\lesssim\lt\|e^{a A^{-\frac{1}{3}}|k|^{\frac{2}{3} }t}\langle k\rangle^{m  }\langle\frac{1}{k}\rangle^\epsilon  |k|^\frac13 \lt \| \q^3 d_k\rt\|_{L_y^2}\rt\|_{L_k^2}      \lt\|e^{a A^{-\frac{1}{3}}|l|^{\frac{2}{3} } t }  |l| \lt\|\q \Delta_l^{-1} \omega_l \rt\|_{L_y^{2}}\rt\|_{L_l^1} \\
			&\quad \times\lt\|e^{a A^{-\frac{1}{3}}|k-l|^{\frac{2}{3} } t }\langle k-l\rangle^{m   }\langle\frac{1}{k-l}\rangle^\epsilon  |k-l|^\frac13\lt \|  \py d_{k-l}\rt\|_{L_y^\infty}\rt\|_{L_{k-l}^2} \\
			&\lesssim  \lt\|e^{a A^{-\frac{1}{3}}\left|D_x\right|^{\frac{2}{3} } t }   \nabla \lt(\nabla^2 |D_x|^\frac13 d\rt)\rt\|_{Y_{m,\epsilon}}\lt\|e^{a A^{-\frac{1}{3}}\left|D_x\right|^{\frac{2}{3} } t }   \px \nabla \Delta^{-1}   \omega\rt\|_{Y_{m,\epsilon}} \\
			&\quad \times \lt\|e^{a A^{-\frac{1}{3}}\left|D_x\right|^{\frac{2}{3} } t }   \nabla|D_x|^\frac13   d\rt\|_{Y_{m,\epsilon}}^\frac12 \lt\|e^{a A^{-\frac{1}{3}}\left|D_x\right|^{\frac{2}{3} } t }     \nabla^2 |D_x|^\frac13   d\rt\|_{Y_{m,\epsilon}}^\frac12 .\alabel{31}
		\end{align*}
		By \eqref{eq:trick2}--\eqref{eq:trick3.5}, we have
		\begin{align*}   
			&\quad\int_{\mathcal{R}_{\text{HL}}(k,l)} \Phi_3(k,l)d k d l \\&\lesssim\lt\|e^{a A^{-\frac{1}{3}}|k|^{\frac{2}{3} } t }\langle k\rangle^{m  }\langle\frac{1}{k}\rangle^\epsilon  |k|^\frac13 \lt\| \q^3 d_k\rt\|_{L_y^2}\rt\|_{L_k^2}            \lt \|e^{a A^{-\frac{1}{3}}|l|^{\frac{2}{3} } t }\langle l\rangle^{m  }\langle\frac{1}{l}\rangle^\epsilon|l|^{\frac43}\lt\| \q \Delta_l^{-1} \omega_l  \rt\|_{L_y^{2}}\rt\|_{L_l^2} \\
			& \quad \times\lt\||k-l|^{-\frac13}\langle k-l\rangle^{-m}\langle \frac{1}{k-l}\rangle^{-\epsilon}\rt\|_{L_{k-l}^2}\lt\|e^{a A^{-\frac{1}{3}}|k-l|^{\frac{2}{3} } t }\langle k-l\rangle^{m  }\langle\frac{1}{k-l}\rangle^\epsilon   |k-l|^\frac13 \|   \partial_yd_{k-l}\|_{L_y^\infty}\rt\|_{L_{k-l}^2} ,
		\end{align*}
        and
		\begin{align*}    
			&\quad\int_{\mathcal{R}_{\text{LH}}(k,l)} \Phi_3(k,l) d k d l\\
			& \lesssim\lt\|e^{a A^{-\frac{1}{3}}|k|^{\frac{2}{3} } t }\langle k\rangle^m\langle\frac{1}{k}\rangle^\epsilon  |k|^\frac13  \| \q^3 d_k\|_{L_y^2}\rt\|_{L_k^2}  \lt\|e^{a A^{-\frac{1}{3}}|l|^{\frac{2}{3} } t }\langle l\rangle^{ m  }\langle\frac{1}{l}\rangle^\epsilon|l|^{\frac43}  \|\q \Delta_l^{-1} \omega_l \|_{L_y^{2}}\rt\|_{L_l^2} \\
			& \quad \times\lt\|\left(\langle\frac{1}{k}\rangle^\epsilon+\langle\frac{1}{k}\rangle^{ \frac13-\epsilon}\right)\langle k\rangle^{-m}\rt\|_{L_k^2} \lt\|e^{a A^{-\frac{1}{3}}|k-l|^{\frac{2}{3} } t }\langle k-l\rangle^{m  }\langle\frac{1}{k-l}\rangle^\epsilon   |k-l|^\frac13  \|   \partial_yd_{k-l}\|_{L_y^\infty}\rt\|_{L_{k-l}^2}.
		\end{align*}
        Likewise, these two terms are bounded by \eqref{12}.
			
		\underline{\textbf{Step IV: Estimates of $\Phi_4(k,l)$.}}
        On one hand, by \eqref{eq:trick1}
		we have
		\begin{align*}   
			\int_{\mathcal{R}_{\text{res}}(k,l)} \Phi_4(k,l)d k d l&\lesssim\lt\|e^{a A^{-\frac{1}{3}}|k|^{\frac{2}{3} }t}\langle k\rangle^{m  }\langle\frac{1}{k}\rangle^\epsilon  |k|^\frac13 \lt \| \q^3 d_k\rt\|_{L_y^2}\rt\|_{L_k^2}      \lt\|e^{a A^{-\frac{1}{3}}|l|^{\frac{2}{3} } t } |l|  \lt \|  \Delta_l^{-1} \omega_l \rt\|_{L_y^{\infty}}\rt\|_{L_l^1} 
            \\
			&\quad \times\lt\|e^{a A^{-\frac{1}{3}}|k-l|^{\frac{2}{3} } t }\langle k-l\rangle^{m   }\langle\frac{1}{k-l}\rangle^\epsilon  |k-l|^\frac13 \lt\|  \q\py d_{k-l}\rt\|_{L_y^2}\rt\|_{L_{k-l}^2} 
            .\alabel{41}
		\end{align*}

		On the other hand, by \eqref{eq:trick2}--\eqref{eq:trick3.5} we have 
		\begin{align*}   
			&\quad\int_{\mathcal{R}_{\text{HL}}(k,l)}\Phi_4(k,l) d k d l\\
			& \lesssim\lt\|e^{a A^{-\frac{1}{3}}|k|^{\frac{2}{3} } t }\langle k\rangle^{m  }\langle\frac{1}{k}\rangle^\epsilon  |k|^\frac13 \lt\| \q^3 d_k\rt\|_{L_y^2}\rt\|_{L_k^2}            \lt \|e^{a A^{-\frac{1}{3}}|l|^{\frac{2}{3} } t }\langle l\rangle^{m  }\langle\frac{1}{l}\rangle^\epsilon|l|^{\frac32 } \lt\|   \Delta_l^{-1} \omega_l  \rt\|_{L_y^{\infty}}\rt\|_{L_l^2} \\
			& \quad \times\lt\||k-l|^{-\frac12}\langle k-l\rangle^{-m}\langle \frac{1}{k-l}\rangle^{-\epsilon}\rt\|_{L_{k-l}^2} \lt\|e^{a A^{-\frac{1}{3}}|k-l|^{\frac{2}{3} } t }\langle k-l\rangle^{m  }\langle\frac{1}{k-l}\rangle^\epsilon   |k-l|^\frac13  \| \py\q  d_{k-l}\|_{L_y^2}\rt\|_{L_{k-l}^2},
		\end{align*}
		and
		\begin{align*}    
			&\quad\int_{\mathcal{R}_{\text{LH}}(k,l)} \Phi_4(k,l) d k d l\\
			& \lesssim\lt\|e^{a A^{-\frac{1}{3}}|k|^{\frac{2}{3} } t }\langle k\rangle^m\langle\frac{1}{k}\rangle^\epsilon  |k|^\frac13  \| \q^3 d_k\|_{L_y^2}\rt\|_{L_k^2}  \lt\|e^{a A^{-\frac{1}{3}}|l|^{\frac{2}{3} } t }\langle l\rangle^{ m  }\langle\frac{1}{l}\rangle^\epsilon|l|^\frac32 \| \Delta_l^{-1} \omega_l \|_{L_y^{\infty}}\rt\|_{L_l^2} \\
			& \quad \times\lt\|\left(\langle\frac{1}{k}\rangle^\epsilon+\langle\frac{1}{k}\rangle^{\frac{1}{2}-\epsilon}\right)\langle k\rangle^{-m}\rt\|_{L_k^2} \lt\|e^{a A^{-\frac{1}{3}}|k-l|^{\frac{2}{3} } t }\langle k-l\rangle^{m  }\langle\frac{1}{k-l}\rangle^\epsilon   |k-l|^\frac13  \lt\| \py\q  d_{k-l}\rt\|_{L_y^2}\rt\|_{L_{k-l}^2}.
		\end{align*}

        Hence, it follows from
        \begin{align*}
        \lt\|e^{a A^{-\frac{1}{3}}|l|^{\frac{2}{3} } t } |l|  \lt \|  \Delta_l^{-1} \omega_l \rt\|_{L_y^{\infty}}\rt\|_{L_l^1}
& \lesssim \lt\|e^{a A^{-\frac{1}{3}}|l|^{\frac{2}{3} } t }   \langle l\rangle^m\langle \frac1l\rangle^{\epsilon} l \lt\|  \nabla_l\Delta_l^{-1} \omega_l \rt\|_{L_y^{2}}\rt\|_{L_l^2}\lt\||l|^{-\frac12}\langle l\rangle^{-m}\langle \frac1l\rangle^{-\epsilon}\rt\|_{L^2_l}\\
 &\lesssim   \lt\|e^{a A^{-\frac{1}{3}}\left|D_x\right|^{\frac{2}{3} } t }    \px \nabla \Delta^{-1} \omega\rt\|_{Y_{m,\epsilon}}   
        \end{align*}
        and \eqref{eq:GN2} that
        \begin{align*}
            \int_{\mathcal{R}^2 } \Phi_4(k,l) d k d l 
            &\lesssim \lt\|e^{a A^{-\frac{1}{3}}\left|D_x\right|^{\frac{2}{3} } t }    \nabla \lt(\nabla^2|D_x|^\frac13 d\rt) \rt\|_{Y_{m,\epsilon}}  \lt\|e^{a A^{-\frac{1}{3}}\left|D_x\right|^{\frac{2}{3} } t }   \px\nabla\Delta^{-1}  \omega\rt\|_{Y_{m,\epsilon}}   \\
			&  \times    \lt\|e^{a A^{-\frac{1}{3}}\left|D_x\right|^{\frac{2}{3} } t }     \nabla^2 |D_x|^\frac13   d\rt\|_{Y_{m,\epsilon}}  .\alabel{42}
        \end{align*}


\underline{\textbf{Step V: Collecting.}}
		Collecting \eqref{11}--\eqref{42}, 
        and using the definition of $X_{a,m,\epsilon}$ in \eqref{eq:X norm}, we get
		\begin{align*}
			&\quad \frac1A  \left| \Re\left( \p^2   \lt( u \cdot \nabla d\rt) \left\lvert\, \mathcal{M} e^{2 a A^{-\frac{1}{3}}\left|D_x\right|^{\frac{2}{3} } t}\langle D_x\rangle^{2 m}\langle\frac{1}{D_x}\rangle^{2 \epsilon} \p^2  |D_x|^\frac23 d\right.\right)  \right|dt \\
			&\lesssim \frac{1}{A^\frac13} \lt\|      \nabla^2|D_x|^\frac13 d \rt\|_{X_{a,m,\epsilon}}\lt \|    \omega\rt \|_{X_{a,m,\epsilon}}\lt(\lt\|      \nabla^2|D_x|^\frac13 d \rt\|_{X_{a,m,\epsilon}} +\lt\|     \nabla|D_x|^\frac13 d \rt\|_{X_{a,m,\epsilon}} \rt) .
		\end{align*}
		The proof is complete.
	\end{proof}

    \section{The energy estimate for the vorticity $\omega$} \label{sec.5}
    In this section, we give energy estimates for the velocity (vorticity, see Proposition \ref{lem:est of omega}), and then provide the proof.
    
	\begin{Prop}\label{lem:est of omega}
		For $0<a<\frac{1}{16(1+2 \pi)}$ and $0<\epsilon< \frac12 <m$, we have
		\begin{align*}
			\|         \omega  \|_{X_{a,m,\epsilon}}^2 
			&\leq   
			C\|        \omega_{\mathrm{in}} \|_{Y_{m,\epsilon}}^2 + \frac{C}{A^\frac12}\|     \omega \|_{X_{a,m,\epsilon}}^3   \\
			&\quad +  \frac{C}{A^\frac13} \|       \omega \|_{X_{a,m,\epsilon}} \|     \nabla|D_x|^{\frac13} d \|_{X_{a,m,\epsilon}}^{\frac12} \lt \|     \lt(\px^2,\py^2\rt)|D_x|^{\frac13} d \rt\|_{X_{a,m,\epsilon}} \\
            &\qquad\times     \lt(\lt\|\nabla|D_x|^{\frac13} d \rt\|_{X_{a,m,\epsilon}}^{\frac12} +\lt\|     \lt(\px^2,\py^2\rt)|D_x|^{\frac13} d \rt\|_{X_{a,m,\epsilon}}^{\frac12}\rt).
		\end{align*}
	\end{Prop}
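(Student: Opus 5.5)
We would start from the vorticity equation in \eqref{eq:main}, which has exactly the form \eqref{eq:f} with $f=\omega$ and forcing
\[
g=\frac1A\Big[\px\big(\py d\cdot\Delta d\big)-\py\big(\px d\cdot\Delta d\big)\Big].
\]
Proposition \ref{prop:est of f}, applied with $f=\omega$, handles the transport term $\frac1A u\cdot\nabla\omega$ and yields the contribution $A^{-1/2}\|\omega\|_{X_{a,m,\epsilon}}^3$. It therefore remains to bound
\[
\frac1A\int_0^t\Big|\Re\Big(\px(\py d\cdot\Delta d)-\py(\px d\cdot\Delta d)\,\Big|\,\mathcal{M}e^{2aA^{-\frac13}|D_x|^{\frac23}t}\langle D_x\rangle^{2m}\langle\tfrac1{D_x}\rangle^{2\epsilon}\omega\Big)\Big|dt .
\]

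We move the outer derivative onto the test function by integrating by parts, using that $\mathcal{M}$ and the weights are Fourier multipliers commuting with $\nabla$. The term then becomes a pairing of $\q d\cdot\Delta d$ against $\q$ applied to the weighted $\omega$. After this, we do not want to put a full derivative on $\omega$, since the $X$ norm only controls $\frac1A\|\nabla\omega\|_{L^2L^2}^2$.

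The plan is to split the $\omega$ factor evenly using the weight $|D_x|^{1/3}$. Concretely, on the Fourier side we write the trilinear form as
\[
\int e^{2aA^{-\frac13}|k|^{\frac23}t}\langle k\rangle^{2m}\langle\tfrac1k\rangle^{2\epsilon}\Big|\int\mathcal{M}(k,D_y)\,\q\omega_k\,\nabla_l d_l\cdot\Delta_{k-l}d_{k-l}\,dy\Big|\,dk\,dl .
\]
We split into $\mathcal{R}_{\text{res}}$, $\mathcal{R}_{\text{HL}}$ and $\mathcal{R}_{\text{LH}}$ exactly as in Lemma \ref{lem:est of dx13 nabla d2}. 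There is no triple convolution here, so the two-layer decomposition of Lemma \ref{lem:Preliminary estimates} is not needed.

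In each region the factors are placed as follows.
\begin{itemize}
\item $\omega$ goes in $L^2_kL^2_y$ with $\nabla$, paid for as $A^{1/2}\cdot\frac1{A^{1/2}}\|\nabla\omega\|_{L^2_tY}$.
\item $\nabla_l d_l$ goes in $L^1_lL^\infty_y$ via Corollary \ref{cor:L infty} or \eqref{eq:GN1}, giving $\|\nabla|D_x|^{1/3}d\|^{1/2}\|\nabla^2|D_x|^{1/3}d\|^{1/2}$ or the variant with $\nabla|D_x|^{1/3}d$ on both factors.
\item $\Delta_{k-l}d_{k-l}$ goes in $L^2$ with $|k-l|^{1/3}$, giving $\|(\px^2,\py^2)|D_x|^{1/3}d\|$.
\end{itemize}
The factor $|k-l|^{1/3}$ is borrowed via \eqref{eq:trick1}--\eqref{eq:trick3.5}. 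In the LH region the low-frequency integrability $\|\langle\frac1k\rangle^{\epsilon}\langle k\rangle^{-m}\|_{L^2_k}$ requires only $\epsilon<\frac12$, consistent with the stated range $0<\epsilon$.

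For the time integration, Hölder in time uses $L^2_t$ on the $\nabla\omega$ factor and $L^2_t$ on one $d$-factor carrying $|D_x|^{1/3}$, which gains $A^{1/6}$ from the enhanced-dissipation component of $X$. All remaining factors go in $L^\infty_t$. The powers of $A$ then combine as $A^{-1}\cdot A^{1/2}\cdot A^{1/6}=A^{-1/3}$, which matches the claimed prefactor.

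The main obstacle is the placement of derivatives. With $\nabla\Delta d$ the count requires exactly $\nabla\omega$ in $L^2$ together with $\Delta d$ in $L^2$, and there is no room for an additional $|D_x|^{1/3}$ on $\omega$. The mixed term $\px d\,\py^2 d$ must also be written so that both second derivatives are among $\px^2$, $\py^2$, $\px\py$, with $\px\py$ controlled by $(\px^2,\py^2)$ via Plancherel. If the $A^{1/6}$ gain cannot be placed on a $d$-factor in some region, the fallback is to use $A^{-1/3}\||D_x|^{1/3}\omega\|$ instead of $A^{-1/2}\|\nabla\omega\|$. This also yields $A^{-1/3}$ overall when combined with $L^\infty_t$ bounds on $d$.
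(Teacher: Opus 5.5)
Your treatment of $\py(\px d\cdot\Delta d)$ is correct and matches the paper's Step II. After moving $\py$ onto $\omega$, you pay $A^{1/2}$ for $\py\omega$ in $L^2_t$. The factor $l$ from $\px d_l$ then supplies both $|l|^{1/3}\lesssim|k-l|^{1/3}$ for $\Delta_{k-l}d_{k-l}$ and $|l|^{2/3}$ for $d_l$. Corollary~\ref{cor:L infty}, applied to $|l|^{2/3}d_l$ with $\alpha=1$, $\beta=0$, puts that factor in $L^2_t$ at cost $A^{1/6}$, which gives $A^{-1/3}$.

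The gap is the other term, $\px(\py d\cdot\Delta d)$, which your generic notation $\nabla_l d_l$ hides. Here the first $d$-factor is $\py d_l$, not $\px d_l$. Once you spend the whole Fourier factor $|k|$ on $\px\omega$, the remaining integrand $\|\py d_l\|_{L^\infty_y}\|\Delta_{k-l}d_{k-l}\|_{L^2_y}$ carries no horizontal frequency at all. The $L^1_l$ factor can absorb negative powers of $|l|$. But $\Delta_{k-l}d_{k-l}$ has to sit in $L^2_{k-l}$ with the full weight $\langle k-l\rangle^m\langle\frac{1}{k-l}\rangle^\epsilon$ (for instance in $\mathcal{R}_{\text{res}}$), so you would need $\|\Delta d\|_{Y_{m,\epsilon}}$. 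That quantity is not controlled by $\|(\px^2,\py^2)|D_x|^{1/3}d\|_{Y_{m,\epsilon}}$ at low $x$-frequencies. The inequalities \eqref{eq:trick1}--\eqref{eq:trick3.5} only compare factors that are already present, so there is nothing from which to ``borrow'' $|k-l|^{1/3}$. Moreover, since $\px\omega$ is only in $L^2_t$, you also need a second $L^2_t$ factor, i.e.\ an extra $|D_x|^{1/3}$ on some $d$-factor, and that is not available either.

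So your claim that the derivative count forces $\nabla\omega$ in $L^2$ is exactly what fails here. The paper instead keeps only $|k|^{1/3}$ on $\omega$ and moves $|k|^{2/3}$ onto a $d$-factor:
\begin{itemize}
\item in $\mathcal{R}_{\text{res}}$, via $|k|\sim|k-l|$, producing $|D_x|^{1/3}\nabla^2|D_x|^{1/3}d$;
\item in $\mathcal{R}_{\text{HL}}$ and $\mathcal{R}_{\text{LH}}$, via $|k|\lesssim|l|$, producing $|l|^{2/3}\py d_l$, controlled through \eqref{eq:GN1}.
\end{itemize}
Then $|D_x|^{1/3}\omega$ and one $d$-factor are each in $L^2_t$ at cost $A^{1/6}$, and this term comes out with $A^{-2/3}$.

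Your fallback mentions $|D_x|^{1/3}\omega$, but as stated (``combined with $L^\infty_t$ bounds on $d$'') it does not close. A single $L^2_t$ factor against $L^\infty_t$ factors is not integrable in time uniformly in $t$. The missing step is precisely transferring the leftover $|k|^{2/3}$ onto a $d$-factor to create the second $L^2_t$ factor.
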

	\begin{proof}
		Recall that 
		\begin{align} \label{eq:omega}
			\partial_t \omega+y \partial_x \omega-\frac{1}{A} \Delta \omega=-\frac{1}{A}\big[ u \cdot \nabla \omega   - \px \left(  \py d\cdot\Delta d\right) +\py\left(  \px d\cdot\Delta d\right) \big].
		\end{align}
		One applies Proposition \ref{prop:est of f} to \eqref{eq:omega} and has
		\begin{equation}\begin{aligned} \label{eq:est of omega}
				\|   \omega  \|_{X_{a,m,\epsilon}}^2 
				&\lesssim    
				\| \omega_{\mathrm{in}} \|_{Y_{m,\epsilon}}^2 + \frac1{A^\frac12}\|    \omega \|_{X_{a,m,\epsilon}}^3   \\
				&\quad + \frac1A\int_{0}^{t} \left| \Re\left( \px \left(  \py d\cdot\Delta d\right) \left\lvert\, \mathcal{M} e^{2 a A^{-\frac{1}{3}}\left|D_x\right|^{\frac{2}{3} }t }\langle D_x\rangle^{2 m}\langle\frac{1}{D_x}\rangle^{2 \epsilon} \omega \right.\right) \right| dt \\
				&\quad + \frac1A\int_{0}^{t} \left| \Re\left( \py\left(  \px d\cdot\Delta d\right) \left\lvert\, \mathcal{M} e^{2 a A^{-\frac{1}{3}}\left|D_x\right|^{\frac{2}{3} }t }\langle D_x\rangle^{2 m}\langle\frac{1}{D_x}\rangle^{2 \epsilon} \omega \right.\right) \right| dt .
		\end{aligned}\end{equation}
		Then we complete the proof by estimating the remaining terms in three steps.
		
		\underline{\textbf{Step I: Estimates of $\px \left(  \py d\cdot\Delta d\right)$.}}
		Thanks to the Fourier transform, we have
		\begin{align*}
			&  \quad\left|   \Re\left( \px \left(  \py d\cdot\Delta d\right) \left\lvert\, \mathcal{M} e^{2 a A^{-\frac{1}{3}}\left|D_x\right|^{\frac{2}{3} } t}\langle D_x\rangle^{2 m}\langle\frac{1}{D_x}\rangle^{2 \epsilon} \omega\right.\right) \right|  \\
            &\lesssim  \int_{\mathbb{R}^2} e^{2 a A^{-\frac{1}{3}}|k|^{\frac{2}{3} } t }\langle k\rangle^{2 m   }\langle\frac{1}{k}\rangle^{2 \epsilon}  |k| \left|\int_{\mathbb{R}} \mathcal{M}\left(k, D_y\right)     \omega_k(y)  \py d_l(y)    \cdot \Delta_{k-l} d_{k-l}(y)  d y\right| d k d l
		\end{align*}
		
		$\bullet$~\textbf{In $\mathcal{R}_{\text{res}}(k,l)$.} By \eqref{eq:trick1} and Corollary \ref{cor:L infty} ($\alpha=\beta=\frac13$), we have
		\begin{align*}  
			&\quad \int_{\mathcal{R}_{\text{res}}(k,l)} e^{2 a A^{-\frac{1}{3}}|k|^{\frac{2}{3} } t }\langle k\rangle^{2 m   }\langle\frac{1}{k}\rangle^{2 \epsilon}  |k| \left|\int_{\mathbb{R}} \mathcal{M}\left(k, D_y\right)    \omega_k(y) \py  d_l(y)    \cdot \Delta_{k-l} d_{k-l}(y)  d y\right| d k d l \\
			&\lesssim \lt\|e^{a A^{-\frac{1}{3}}|k|^{\frac{2}{3} }t}\langle k\rangle^{m  }\langle\frac{1}{k}\rangle^\epsilon  |k|^\frac13  \lt\| \omega_k \rt\|_{L_y^2}\rt\|_{L_k^2}    \lt \|e^{a A^{-\frac{1}{3}}|l|^{\frac{2}{3} } t }   \lt\|  \py d_l \rt\|_{L_y^{\infty}}\rt\|_{L_l^1} \\
			&\quad \times\lt\|e^{a A^{-\frac{1}{3}}|k-l|^{\frac{2}{3} } t }\langle k-l\rangle^{m   } \langle\frac{1}{k-l}\rangle^\epsilon   |k-l|^\frac23   \lt\| \Delta_{k-l} d_{k-l}\rt\|_{L_y^2}   \rt\|_{L_{k-l}^2} \\
			&\lesssim \lt\|e^{a A^{-\frac{1}{3}}\left|D_x\right|^{\frac{2}{3} } t }   |D_x|^\frac13 \omega\rt \|_{Y_{m,\epsilon}} \lt\|e^{a A^{-\frac{1}{3}}\left|D_x\right|^{\frac{2}{3} }t}      \py |D_x|^\frac13   d\rt\|_{Y_{m,\epsilon}}^\frac12       \\
			&\quad \times \lt\|e^{a A^{-\frac{1}{3}}\left|D_x\right|^{\frac{2}{3} }t}    \py^2  |D_x|^\frac13 d\rt\|_{Y_{m,\epsilon}}^\frac12 \lt\|e^{a A^{-\frac{1}{3}}\left|D_x\right|^{\frac{2}{3} } t }   |D_x|^\frac13 \nabla^2 |D_x|^\frac13  d \rt\|_{Y_{m,\epsilon}} .  \alabel{eq:est of py d py^2 d 11}
		\end{align*}
		
    		$\bullet$~\textbf{In $\mathcal{R}_{\text{HL}}(k,l)$.}
		By \eqref{eq:GN1} and \eqref{eq:trick2},
		we get
		\begin{align*}   
			&\quad\int_{\mathcal{R}_{\text{HL}}(k,l)}   e^{2 a A^{-\frac{1}{3}}|k|^{\frac{2}{3} } t }\langle k\rangle^{2 m   }\langle\frac{1}{k}\rangle^{2 \epsilon} |k|   \left|\int_{\mathbb{R}} \mathcal{M}\left(k, D_y\right)   \omega_k(y)  \py  d_l(y)    \cdot \Delta_{k-l} d_{k-l}(y)  d y\right| d k d l \\
			&\lesssim \lt \|e^{a A^{-\frac{1}{3}}|k|^{\frac{2}{3} }t}\langle k\rangle^{m   }\langle\frac{1}{k}\rangle^\epsilon  |k|^\frac13  \lt\|   \omega_k\rt\|_{L_y^2}\rt\|_{L_k^2}     \lt\|e^{a A^{-\frac{1}{3}}|l|^{\frac{2}{3} } t } \langle l\rangle^{m   }\langle\frac{1}{l}\rangle^\epsilon |l|^\frac23 \lt\| \py   d_l \rt\|_{L_y^{\infty}}\rt\|_{L_l^2} \\
			&\quad \times\lt\|e^{a A^{-\frac{1}{3}}|k-l|^{\frac{2}{3} } t }\langle k-l\rangle^{m   } \langle\frac{1}{k-l}\rangle^\epsilon    |k-l|^\frac13 \lt\|\Delta_{k-l} d_{k-l }\rt\|_{L_y^2}\rt \|_{L_{k-l}^2} 
			\lt\||k-l|^{-\frac13}\langle k-l\rangle^{-m}\langle\frac{1}{k-l}\rangle^{-\epsilon}\rt\|_{L_{k-l}^2}\\
			&\lesssim  \lt\|e^{a A^{-\frac{1}{3}}\left|D_x\right|^{\frac{2}{3} } t }    |D_x|^\frac13 \omega \rt\|_{Y_{m,\epsilon}} \lt\|e^{a A^{-\frac{1}{3}}\left|D_x\right|^{\frac{2}{3} }t}    |D_x|^\frac13  \py  |D_x|^\frac13  d\rt\|_{Y_{m,\epsilon}}^\frac12       \\
			&\quad \times \lt\|e^{a A^{-\frac{1}{3}}\left|D_x\right|^{\frac{2}{3} }t}    |D_x|^\frac13  \py^2  |D_x|^\frac13  d\rt\|_{Y_{m,\epsilon}}^\frac12 \lt\|e^{a A^{-\frac{1}{3}}\left|D_x\right|^{\frac{2}{3} } t }   \nabla^2 |D_x|^\frac13  d\rt \|_{Y_{m,\epsilon}} .  \alabel{eq:est of py d py^2 d 12} 
		\end{align*}
		
		$\bullet$~\textbf{In $\mathcal{R}_{\text{LH}}(k,l)$.}
		The inequalities \eqref{eq:GN1}, \eqref{eq:trick3} and  \eqref{eq:trick3.5} imply
		\begin{align*}  
			&\quad\int_{\mathcal{R}_{\text{LH}}(k,l)} e^{2 a A^{-\frac{1}{3}}|k|^{\frac{2}{3} } t }\langle k\rangle^{2 m   }\langle\frac{1}{k}\rangle^{2 \epsilon}  |k| \left|\int_{\mathbb{R}} \mathcal{M}\left(k, D_y\right)   \omega_k(y)  \py d_l(y)    \cdot \Delta_{k-l} d_{k-l}(y)  d y\right| d k d l \\
			& \lesssim \lt\|e^{a A^{-\frac{1}{3}}|k|^{\frac{2}{3} } t }\langle k\rangle^{m }\langle\frac{1}{k}\rangle^\epsilon |k|^\frac13  \| \omega_k\|_{L_y^2}\rt\|_{L_k^2}  \lt\|e^{a A^{-\frac{1}{3}}|l|^{\frac{2}{3} } t }\langle l\rangle^{m   }\langle\frac{1}{l}\rangle^\epsilon  |l|^{\frac23}  \|  \py d_l\|_{L_y^\infty}\rt\|_{L_l^2} \\
			& \quad \times  \lt \|e^{a A^{-\frac{1}{3}}|k-l|^{\frac{2}{3} } t }\langle k-l\rangle^{m   } \langle\frac{1}{k-l}\rangle^\epsilon    |k-l|^\frac13 \| \Delta_{k-l} d_{k-l }\|_{L_y^2}  \rt\|_{L_{k-l}^2} 
			\lt\|  \left(\langle\frac{1}{k}\rangle^\epsilon  +\langle\frac{1}{k}\rangle^{\frac13 -\epsilon}  \right)\langle k\rangle^{-m}\rt\|_{L_k^2},
		\end{align*}
		which can be controlled by the same bound as \eqref{eq:est of py d py^2 d 12}.

		\underline{\textbf{Step II: Estimates of $\py\left(  \px d\cdot\Delta d\right)$.}} 
        Note that
		\begin{align*}
			&   \left|   \Re\left( \py\left(  \px d\cdot\Delta d\right) \left\lvert\, \mathcal{M} e^{2 a A^{-\frac{1}{3}}\left|D_x\right|^{\frac{2}{3} } t}\langle D_x\rangle^{2 m}\langle\frac{1}{D_x}\rangle^{2 \epsilon} \omega\right.\right) \right|  \\
			&\lesssim    \int_{\mathbb{R}^2} e^{2 a A^{-\frac{1}{3}}|k|^{\frac{2}{3} } t }\langle k\rangle^{2 m   }\langle\frac{1}{k}\rangle^{2 \epsilon}   \left|\int_{\mathbb{R}} \mathcal{M}\left(k, D_y\right) \py   \omega_k(y)  l d_l(y)    \cdot \Delta_{k-l} d_{k-l}(y)  d y\right| d k d l.
		\end{align*}
		
		$\bullet$~\textbf{In $\mathcal{R}_{\text{res}}(k,l)$.} From \eqref{eq:trick1} and Corollary \ref{cor:L infty} ($f_l=|l|^\frac23 d_l$,  $\alpha=1$, $\beta=0$), we deduce that
		\begin{align*}  
			&\quad \int_{\mathcal{R}_{\text{res}}(k,l)} e^{2 a A^{-\frac{1}{3}}|k|^{\frac{2}{3} } t }\langle k\rangle^{2 m   }\langle\frac{1}{k}\rangle^{2 \epsilon}   \left|\int_{\mathbb{R}} \mathcal{M}\left(k, D_y\right)   \py \omega_k(y)  l d_l(y)    \cdot \Delta_{k-l} d_{k-l}(y)  d y\right| d k d l \\
			&\lesssim \lt\|e^{a A^{-\frac{1}{3}}|k|^{\frac{2}{3} }t}\langle k\rangle^{m  }\langle\frac{1}{k}\rangle^\epsilon   \lt\|\py \omega_k \rt\|_{L_y^2}\rt\|_{L_k^2}     \lt\|e^{a A^{-\frac{1}{3}}|l|^{\frac{2}{3} } t } |l|^\frac23 \lt \|   d_l \rt\|_{L_y^{\infty}}\rt\|_{L_l^1} \\
			&\quad \times\lt\|e^{a A^{-\frac{1}{3}}|k-l|^{\frac{2}{3} } t }\langle k-l\rangle^{m   } \langle\frac{1}{k-l}\rangle^\epsilon   |k-l|^\frac13   \lt\| \Delta_{k-l} d_{k-l}\rt\|_{L_y^2}  \rt \|_{L_{k-l}^2} \\
			&\lesssim \lt\|e^{a A^{-\frac{1}{3}}\left|D_x\right|^{\frac{2}{3} } t }    \nabla \omega\rt \|_{Y_{m,\epsilon}} \lt\|e^{a A^{-\frac{1}{3}}\left|D_x\right|^{\frac{2}{3} }t}      |D_x|^\frac13 \px|D_x|^{\frac13}d \rt\|_{Y_{m,\epsilon}}^\frac12       \\
			&\quad \times\lt \|e^{a A^{-\frac{1}{3}}\left|D_x\right|^{\frac{2}{3} }t}    |D_x|^{\frac13}\py|D_x|^{\frac13}d\rt\|_{Y_{m,\epsilon}}^\frac12 \lt\|e^{a A^{-\frac{1}{3}}\left|D_x\right|^{\frac{2}{3} } t }   \nabla^2 |D_x|^\frac13  d \rt\|_{Y_{m,\epsilon}} .  \alabel{eq:est of px d py^2 d py omega 1}
		\end{align*}

		$\bullet$~\textbf{In $\mathcal{R}_{\text{HL}}(k,l)$.}
		By \eqref{eq:GN1}, \eqref{eq:trick2} and $1 \lesssim |l|^\frac16 |k-l|^{-\frac16}$,
		we get
		\begin{align*}   
			&\quad\int_{\mathcal{R}_{\text{HL}}(k,l)}   e^{2 a A^{-\frac{1}{3}}|k|^{\frac{2}{3} } t }\langle k\rangle^{2 m   }\langle\frac{1}{k}\rangle^{2 \epsilon}   \left|\int_{\mathbb{R}} \mathcal{M}\left(k, D_y\right)   \py \omega_k(y)  l d_l(y)    \cdot \Delta_{k-l} d_{k-l}(y)  d y\right| d k d l \\
			&\lesssim  \lt\|e^{a A^{-\frac{1}{3}}|k|^{\frac{2}{3} }t}\langle k\rangle^{m   }\langle\frac{1}{k}\rangle^\epsilon   \lt\| \py \omega_k\rt\|_{L_y^2}\rt\|_{L_k^2}    \lt \|e^{a A^{-\frac{1}{3}}|l|^{\frac{2}{3} } t } \langle l\rangle^{m   }\langle\frac{1}{l}\rangle^\epsilon |l|^\frac76 \lt\|   d_l \rt\|_{L_y^{\infty}}\rt\|_{L_l^2} \\
			&\quad \times\lt\|e^{a A^{-\frac{1}{3}}|k-l|^{\frac{2}{3} } t }\langle k-l\rangle^{m   } \langle\frac{1}{k-l}\rangle^\epsilon    |k-l|^\frac13 \lt\|\Delta_{k-l} d_{k-l }\rt\|_{L_y^2}\rt \|_{L_{k-l}^2} 
			\lt\||k-l|^{-\frac12}\langle k-l\rangle^{-m}\langle\frac{1}{k-l}\rangle^{-\epsilon}\rt\|_{L_{k-l}^2}\\
			&\lesssim  \lt\|e^{a A^{-\frac{1}{3}}\left|D_x\right|^{\frac{2}{3} } t }    \nabla \omega \rt\|_{Y_{m,\epsilon}} \lt\|e^{a A^{-\frac{1}{3}}\left|D_x\right|^{\frac{2}{3} }t}    |D_x|^\frac13  \px|D_x|^{\frac13} d\rt\|_{Y_{m,\epsilon}}^\frac12       \\
			&\quad \times \lt\|e^{a A^{-\frac{1}{3}}\left|D_x\right|^{\frac{2}{3} }t}    |D_x|^{\frac13}\py|D_x|^{\frac13} d\rt\|_{Y_{m,\epsilon}}^\frac12 \lt\|e^{a A^{-\frac{1}{3}}\left|D_x\right|^{\frac{2}{3} } t }   \nabla^2 |D_x|^\frac13  d \rt\|_{Y_{m,\epsilon}} .   \alabel{eq:est of px d py^2 d py omega 2}
		\end{align*}
		$\bullet$~\textbf{In $\mathcal{R}_{\text{LH}}(k,l)$.}
		By \eqref{eq:GN1}, \eqref{eq:trick3} and \eqref{eq:trick3.5},
		we have
		\begin{align*}   
			&\quad\int_{\mathcal{R}_{\text{LH}}(k,l)}   e^{2 a A^{-\frac{1}{3}}|k|^{\frac{2}{3} } t }\langle k\rangle^{2 m   }\langle\frac{1}{k}\rangle^{2 \epsilon}   \left|\int_{\mathbb{R}} \mathcal{M}\left(k, D_y\right)   \py \omega_k(y)  l d_l(y)    \cdot \Delta_{k-l} d_{k-l}(y)  d y\right| d k d l \\
			&\lesssim  \lt\|e^{a A^{-\frac{1}{3}}|k|^{\frac{2}{3} }t}\langle k\rangle^{m   }\langle\frac{1}{k}\rangle^\epsilon   \lt\| \py \omega_k\rt\|_{L_y^2}\rt\|_{L_k^2}    \lt \|e^{a A^{-\frac{1}{3}}|l|^{\frac{2}{3} } t } \langle l\rangle^{m   }\langle\frac{1}{l}\rangle^\epsilon |l|^\frac76 \lt\|   d_l \rt\|_{L_y^{\infty}}\rt\|_{L_l^2} \\
			&\quad \times\lt\|e^{a A^{-\frac{1}{3}}|k-l|^{\frac{2}{3} } t }\langle k-l\rangle^{m   } \langle\frac{1}{k-l}\rangle^\epsilon    |k-l|^\frac13 \lt\|\q^2 d_{k-l }\rt\|_{L_y^2}\rt \|_{L_{k-l}^2} 
			\lt\|\langle k\rangle^{-m}\lt(\langle \frac{1}{k}\rangle^{\epsilon}+\langle \frac{1}{k}\rangle^{\frac12-\epsilon}\rt)\rt\|_{L_{k}^2},
		\end{align*}
which can be controlled by the last term of \eqref{eq:est of px d py^2 d py omega 2}.

		\underline{\textbf{Step III: Collecting.}} 
		Combining \eqref{eq:est of py d py^2 d 11}--\eqref{eq:est of px d py^2 d py omega 2},  we get
		\begin{align*}
			&\quad \frac1A \int_{0}^{t} \left| \Re\left( \px \left(  \py d\cdot\Delta d\right) \left\lvert\, \mathcal{M} e^{2 a A^{-\frac{1}{3}}\left|D_x\right|^{\frac{2}{3} }t }\langle D_x\rangle^{2 m}\langle\frac{1}{D_x}\rangle^{2 \epsilon} \omega \right.\right) \right| dt\\
			&\lesssim \frac{1}{A^\frac23}\lt \|       \omega \rt\|_{X_{a,m,\epsilon}} \lt\|     \nabla|D_x|^{\frac13} d \rt\|_{X_{a,m,\epsilon}}^{\frac12} \lt\|     \lt(\px^2,\py^2\rt)
|D_x|^{\frac13} d \rt\|_{X_{a,m,\epsilon}}^{\frac32} 
		\end{align*}
        and
        \begin{align*}
			&\quad \int_{0}^{t} \left| \Re\left( \py\left(  \px d\cdot\Delta d\right) \left\lvert\, \mathcal{M} e^{2 a A^{-\frac{1}{3}}\left|D_x\right|^{\frac{2}{3} }t }\langle D_x\rangle^{2 m}\langle\frac{1}{D_x}\rangle^{2 \epsilon} \omega \right.\right) \right| dt \\
			&\lesssim \frac{1}{A^\frac13} \|       \omega \|_{X_{a,m,\epsilon}} \lt\|     \nabla|D_x|^{\frac13} d \rt\|_{X_{a,m,\epsilon}}  \lt \|     \lt(\px^2,\py^2\rt)|D_x|^{\frac13} d \rt\|_{X_{a,m,\epsilon}}, 
		\end{align*}
		which, together with \eqref{eq:est of omega}, complete the proof.

	\end{proof}
	
	\section{Proof of Proposition \ref{main prop2}} \label{sec.6}
	In this section, we close the energy to complete the bootstrap argument, that is, we give the proof of Proposition \ref{main prop2}.
	
	\begin{proof}[\bf Proof of Proposition \ref{main prop2}]
		By \eqref{eq:bootstap2} and Young inequality, we have
            $$ \lt\|   |D_x|^\frac13 \nabla d \rt\|_{X_{a,m,\epsilon}}^2 \leq C\lt( \lt\|      |D_x|^\frac13 d  \rt\|_{X_{a,m,\epsilon}}^2+  \lt\|      |D_x|^\frac13 \nabla^2 d  \rt\|_{X_{a,m,\epsilon}}^2\rt)\leq C K^2. $$
        It follows from Proposition \ref{lem:est of |D_x|13 d} that for $0\leq t \leq T$, there holds
		\begin{align*} 
			\lt \|      |D_x|^\frac13 d  \rt\|_{X_{a,m,\epsilon}}^2  
			&\leq  C\lt\|      |D_x|^\frac13  d_{\mathrm{in}} \rt\|_{Y_{m,\epsilon}}^2 	+ \frac{C} {A^\frac12} \lt\|   \omega \rt\|_{X_{a,m,\epsilon}}  \lt  \|   |D_x|^\frac13 d \rt\|_{X_{a,m,\epsilon}}^2
			\\
			&\quad + \frac{C}{A^{\frac{1}{3}}} \lt\|    |D_x|^\frac13 d\rt  \|_{X_{a,m,\epsilon}}^{2}   \lt\|   \nabla |D_x|^\frac13 d \rt\|_{X_{a,m,\epsilon}}^2  \\
			&\quad + \frac{C}{A^{\frac{1}{6}}}\lt \|  |D_x|^\frac13 d^1 \rt\|_{X_{a,m,\epsilon}}\lt \|    |D_x|^\frac13 d\rt \|_{X_{a,m,\epsilon}}  \lt\|   \nabla |D_x|^\frac13 d \rt\|_{X_{a,m,\epsilon}}  .\\
            &\leq  C\lt\|      |D_x|^\frac13  d_{\mathrm{in}} \rt\|_{Y_{m,\epsilon}}^2 	+ \frac{C} {A^\frac16} K^2\lt\|    |D_x|^\frac13 d \rt\|_{X_{a,m,\epsilon}}^{2}+\frac{C} {A^\frac12}\lt\|  |D_x|^\frac13 d \rt\|_{X_{a,m,\epsilon}}^2 K^2.
		\end{align*}
Similarly, by Proposition \ref{lem:est of pypy dx13 d} we have
		\begin{align*} 
			&\quad\lt\| \lt (\px^2,\py^2\rt) |D_x|^\frac13 d  \rt\|_{X_{a,m,\epsilon}}^2 \\
			&\leq C\lt\|   \lt (\px^2,\py^2\rt) |D_x|^\frac13  d_{\mathrm{in}} \rt\|_{Y_{m,\epsilon}}^2  + CA\lt  \|     |D_x|^\frac13 d\rt\|_{X_{a,m,\epsilon}} \lt\|    \lt(\px^2,\py^2\rt)|D_x|^\frac13 d\rt\|_{X_{a,m,\epsilon}}+ \frac{C}{A^{\frac16}}  K^4
		\end{align*}
	and Proposition \ref{lem:est of omega} implies 
\begin{align*}
			\|         \omega  \|_{X_{a,m,\epsilon}}^2 
			&\leq  C  
			 \|        \omega_{\mathrm{in}} \|_{Y_{m,\epsilon}}^2 + \frac{C}{A^{\frac13}}  K^3.
		\end{align*}

		  Hence, \eqref{eq:bootstap2} implies 
          \begin{align*}
		E(t)& =       A^{\delta}\lt\||D_x|^{\frac13} d\rt\|_{X_{a,m,\epsilon}}  +   \lt\|   \lt (\px^2,\py^2\rt) |D_x|^{\frac13}d \rt\|_{X_{a,m,\epsilon}}     +\lt \| \omega\rt \|_{X_{a,m,\epsilon}}  \\
        & \leq  CA^{\delta}\lt\|      |D_x|^\frac13  d_{\mathrm{in}} \rt\|_{Y_{m,\epsilon}} + C\lt\|     \lt (\px^2,\py^2\rt) |D_x|^\frac13  d_{\mathrm{in}} \rt\|_{Y_{m,\epsilon}} + C\lt\|        \omega_{\mathrm{in}} \rt\|_{Y_{m,\epsilon}}	+ \frac{C} {A^{\frac1{12}}} K^2+C A^{\frac{1-\delta}{2}}K.
	\end{align*}
	Then, choose $\delta>1$ and  
    $$
    \bar{A}_1 = C(m,\epsilon,\delta)K^{\max\{\frac{2}{\delta-1},24 \}} 
    $$ 
    such that if $A> \bar{A}_1$, it follows that
		\begin{align*}
			1\geq  \frac{C} {A^{\frac1{12}}} K^2+C A^{\frac{1-\delta}{2}}K.
		\end{align*}
		Next, by \eqref{eq:smallness2} we define
		\begin{align*}
			E(t) &\leq  C\left( \lt\|    \lt (\px^2,\py^2\rt) |D_x|^\frac13  d_{\mathrm{in}} \rt\|_{Y_{m,\epsilon}}+\|        \omega_{\mathrm{in}} \|_{Y_{m,\epsilon}}  +1 \right)	 =: K,
		\end{align*}
		which completes the proof by noting that
        $$
        A\geq C(m,\epsilon,\delta)\left( \lt\|   \lt (\px^2,\py^2\rt) |D_x|^\frac13  d_{\mathrm{in}} \rt\|_{Y_{m,\epsilon}}+\lt\|        \omega_{\mathrm{in}} \rt\|_{Y_{m,\epsilon}}	+1\right) ^{\max\{\frac{2}{\delta-1},24 \}}. 
        $$

	\end{proof}

	\section*{Declarations}
	\begin{itemize}
		\item \textbf{Acknowledgements} 
		W. Wang was supported by National Key R\&D Program of China (No.2023YFA1009200) and NSFC under grant 12471219.  The research of J. Wei is partially supported by GRF from RGC of Hong Kong
		entitled ``New frontiers in singularity formations in nonlinear partial differential equations".
		\item \textbf{Conflict of interest} The authors declare that they have no conflict of interest.
		\item \textbf{Data Availability} Data sharing is not applicable to this article as no datasets were generated or analyzed during the current study.
	\end{itemize}
	
	\bibliographystyle{myamsalpha}
	\bibliography{EL_CWYrefs}
\end{document}